\documentclass[twoside,11pt]{article}
\usepackage{geometry}
\geometry{a4paper,left=3.4cm,right=3.4cm,top=3cm,bottom=2.6cm}

\usepackage{graphicx}
\usepackage{indentfirst}                                
\usepackage[colorlinks]{hyperref}
\hypersetup{linkcolor=blue,filecolor=black,urlcolor=blue, citecolor=black}   
\usepackage[numbers,sort&compress]{natbib}         

\ifxetex
\usepackage{letltxmacro}
\setlength{\XeTeXLinkMargin}{1pt}
\LetLtxMacro\SavedIncludeGraphics\includegraphics
\def\includegraphics#1#{
 \IncludeGraphicsAux{#1}%
}%
\newcommand*{\IncludeGraphicsAux}[2]{%
 \XeTeXLinkBox{%
  \SavedIncludeGraphics#1{#2}%
}}
\fi
\newcommand\orcidicon[1]{\href{https://orcid.org/#1}{\includegraphics[scale=0.02]{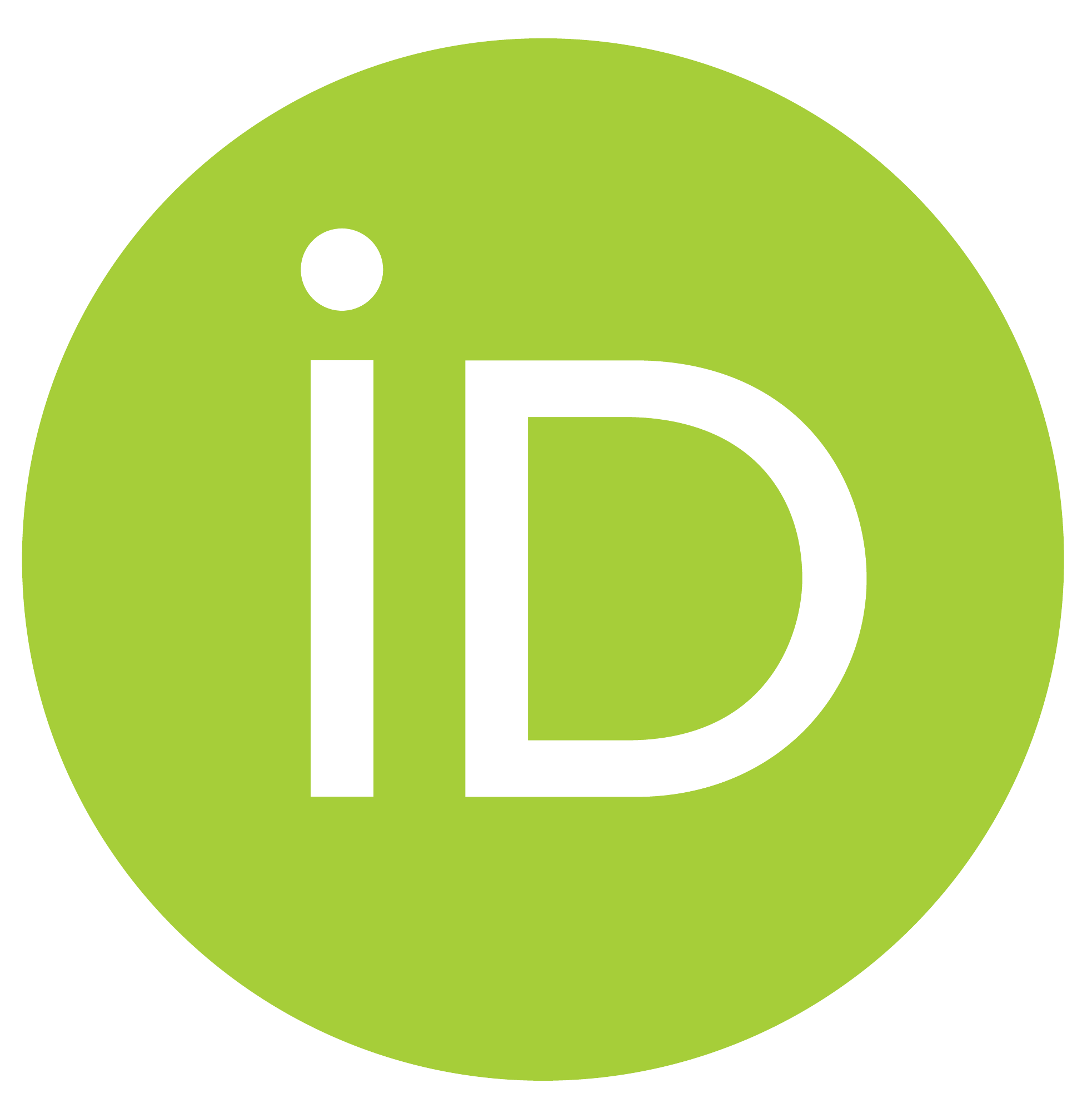}}}

\usepackage[titletoc,title]{appendix}
\usepackage{titlesec}
\titlespacing{\section}{0pt}{2.5ex}{1.5ex}
\titlespacing{\subsection}{0pt}{1.5ex}{1ex}
\titlespacing{\subsubsection}{0pt}{1.5ex}{1ex}
\titleformat{\section}{\large\bfseries\centering}{\thesection}{1em}{}
\titleformat{\subsection}[runin]{\bfseries}{\thesubsection.}{0.5em}{}[.\mbox{\ }]
\titleformat{\subsubsection}[runin]{\bfseries}{\thesubsubsection.}{0.4em}{}[.\mbox{\ }]

\usepackage{amsmath,amsfonts,amssymb,bm,mathrsfs,amsthm,amsxtra}
\numberwithin{equation}{section}
\arraycolsep1.5pt
\newtheorem{lemma}{Lemma}[section]
\newtheorem{proposition}[lemma]{Proposition}
\newtheorem{theorem}{Theorem}[section]

\newtheorem{definition}{Definition}[section]


\usepackage{ulem}

\pagestyle{myheadings}
\markboth {{\sc Y.~Trakhinin \& T.~Wang}}{Moving Interfaces in Ideal Compressible MHD
}

\newcommand{\VERT}{\vert\kern-0.3ex\vert\kern-0.3ex\vert}
\def\d{\,\mathrm{d}}
\def\p{\partial}

\usepackage{accents}
\makeatletter
\def\widebar{\accentset{{\cc@style\underline{\mskip10mu}}}}
\makeatother

\setcounter{tocdepth}{2}
\newcommand\w[1]{\makebox[1em]{$#1$}}

\setcounter{MaxMatrixCols}{20}          

\begin{document}
\title{\bf Well-posedness for Moving Interfaces with Surface Tension in Ideal Compressible MHD\let\thefootnote\relax\footnotetext{
The research of {\sc Yuri Trakhinin} was supported by Mathematical Center in Akademgorodok under Agreement No.~075-15-2019-1675 with the Ministry of Science and Higher Education of the Russian Federation.	
The research of {\sc Tao Wang} was supported by the Fundamental Research Funds for the Central Universities under Grant No.~2042022kf1183, the National Natural Science Foundation of China under Grants No.~11731008 and 11971359, and Hong Kong Institute for Advanced Study under Grant No.~9360157.
}
}

\author{
{\sc Yuri Trakhinin}\orcidicon{0000-0001-8827-2630}\thanks{Sobolev Institute of Mathematics, Koptyug av.~4, 630090 Novosibirsk, Russia; Novosibirsk State University, Pirogova str.~1, 630090 Novosibirsk, Russia. e-mail: trakhin@math.nsc.ru}
\qquad  \qquad {\sc Tao Wang}\orcidicon{0000-0003-4977-8465}\thanks{School of Mathematics and Statistics, Wuhan University, Wuhan 430072, China; Department of Mathematics, City University of Hong Kong, Hong Kong, China.
	e-mail: tao.wang@whu.edu.cn}
}

\date{\empty}

\maketitle

\vspace{-5mm}

{\footnotesize
\noindent{\bf Abstract}:\quad
We study the local well-posedness for an interface with surface tension that separates a perfectly conducting inviscid fluid from a vacuum. The fluid flow is governed by the equations of three-dimensional ideal compressible magnetohydrodynamics (MHD), while the vacuum magnetic and electric fields are supposed to satisfy the pre-Maxwell equations. The fluid and vacuum magnetic fields are tangential to the interface. This renders a nonlinear hyperbolic-elliptic coupled problem with a characteristic free boundary. We introduce some suitable regularization to establish the solvability and tame estimates for the linearized problem. Combining the linear well-posedness result with a modified Nash--Moser iteration scheme, we prove the local existence and uniqueness of solutions of the nonlinear problem. The non-collinearity condition required by Secchi and Trakhinin [\textit{Nonlinearity} 27(1): 105--169, 2014] for the case of zero surface tension becomes unnecessary in our result, which verifies the stabilizing effect of surface tension on the evolution of moving vacuum interfaces in ideal compressible MHD.

\vspace{2mm}
 \noindent{\bf Keywords}:\quad
 Ideal compressible MHD,
 pre-Maxwell equations,
 surface tension,
 moving interface,
 well-posedness

\vspace{2mm}
 \noindent{\bf Mathematics Subject Classification (2020)}:\quad
 76W05,
 35L65,
 35R35

\vspace{2mm}
\tableofcontents
}

\vspace{2mm}

\section{Introduction}\label{sec:intro}

We study the local well-posedness for an interface $\Sigma(t)$ with surface tension that separates a perfectly conducting inviscid fluid from a vacuum.
In the moving domain $\Omega^+(t)\subset \mathbb{R}^3$ occupied by the fluid,
we consider the following equations of ideal compressible magnetohydrodynamics (MHD) (see {\sc Landau--Lifshitz} \cite[\S 65]{LL84MR766230}):
\begin{align}\label{MHD1}
\left\{
\begin{aligned}
&    \p_t \rho +\nabla\cdot (\rho v )=0,\\[0.5mm]
&\p_t (\rho v )+\nabla\cdot (\rho v \otimes v -H \otimes H )+\nabla q =0,\\[0.5mm]
&\p_t H-\nabla\times(v\times H)=0,\\
&\p_t(\rho \mathfrak{e}+\tfrac{1}{2}\rho|v|^2+ \tfrac{1}{2}|H|^2) +\nabla\cdot
\left( v  (\rho \mathfrak{e}+\tfrac{1}{2}\rho|v|^2+p)+H\times(v\times H) \right)=0,
\end{aligned}
\right.
\end{align}
 together with the divergence constraint
\begin{align} \label{divH1}
  \nabla\cdot H=0.
\end{align}
Here density $\rho$, fluid velocity $v=(v_1,v_2,v_3)^{\mathsf{T}}$, magnetic field $H=(H_1,H_2,H_3)^{\mathsf{T}}$, and pressure $p$ are unknown functions of time $t$ and space variable $x=(x_1,x_2,x_3)$.
We denote by $q=p+\frac{1}{2}|H|^2$ the total pressure.
The internal energy $\mathfrak{e}$ and the density $\rho$ are given smooth functions of the pressure $p$ and the entropy $S$.
The thermodynamic variables are related through the Gibbs relation $\vartheta \d S=\d \mathfrak{e}+p\d (1/\rho)$
with $\vartheta>0$ being the temperature.

In the vacuum region $\Omega^-(t)\subset \mathbb{R}^3$,
the magnetic field ${h}=(h_1,h_2,h_3)^{\mathsf{T}}$ and the electric field ${e}=(e_1,e_2,e_3)^{\mathsf{T}}$
are assumed to satisfy the pre-Maxwell equations
\begin{alignat}{3}
& \nabla\times {h}=0,\qquad && \nabla \cdot {h}=0,
\label{pre-M1a}
\\
&\nabla\times {e}=-\p_t h,\qquad && \nabla\cdot {e}=0,
\label{pre-M1b}
\end{alignat}
obtained by neglect of the displacement current in Maxwell's equations \cite{BFG57}.
The vacuum electric field ${e}$ in \eqref{pre-M1a}--\eqref{pre-M1b}
can be degraded to a secondary variable, so that only one basic variable is needed, viz.~${h}$,  satisfying the elliptic system \eqref{pre-M1a}.

Let $\Omega:=(-1,1)\times\mathbb{T}^2$ be the reference domain occupied by the conducting fluid and the vacuum, where $\mathbb{T}^2$ denotes the $2$-torus and will be thought of as the unit square with periodic boundary conditions.
We assume that the moving interface $\Sigma(t)$ has the form of a graph
$\Sigma(t):=\{x_1=\varphi(t,x')\}\cap \Omega$
with $x'=(x_2,x_3)$,
where the interface function $\varphi:\mathbb{R}_+\times \mathbb{T}^2\to (-1,1)$ is to be determined.
Let us denote by $\Sigma^{\pm}:=\{\pm 1\}\times \mathbb{T}^2$ the fixed boundaries of the domains $\Omega^{\pm}(t):=\{x_1\gtrless \varphi(t,x') \}\cap \Omega$, respectively.
Then the boundary conditions read as
\begin{subequations}
\label{BC1}
\begin{alignat}{3}
&\p_t \varphi=v\cdot N
& & \textrm{on } \Sigma(t),
\label{BC1a}\\
&q-\tfrac{1}{2}|{h}|^2=\mathfrak{s}\mathcal{H}(\varphi)
&\qquad & \textrm{on } \Sigma(t),
\label{BC1b}\\
& H\cdot N=0,\quad
{h}\cdot N=0
&\quad & \textrm{on } \Sigma(t),
\label{BC1c}\\
&H_1=0,\quad v_1=0 && \textrm{on } \Sigma^+,
\label{BC1d}\\
& {h}\times \mathbf{e}_1 =\bm{j}_{\rm c} &&\textrm{on } \Sigma^-,
\label{BC1e}
\end{alignat}
\end{subequations}
with $N:=(1,-\p_2\varphi,-\p_3\varphi)^{\mathsf{T}}$ and $\mathbf{e}_1:=(1,0,0)^{\mathsf{T}}$.
Here, $\bm{j}_{\rm c}\in\mathbb{R}^3 $ is a given surface current,
$\mathfrak{s}>0$ is the constant coefficient of surface tension,
and $\mathcal{H}(\varphi)$ is twice the mean curvature of $\Sigma(t)$ defined by
\begin{align}
  \label{H.cal:def}
  \mathcal{H}(\varphi):= \mathrm{D}_{x'}\cdot\left( \frac{\mathrm{D}_{x'}\varphi }{\sqrt{1+|\mathrm{D}_{x'}\varphi|^2 }}\right)
  \quad \textrm{with }
  \mathrm{D}_{x'}:=
  \begin{pmatrix} \p_2\\ \p_3 \end{pmatrix}.
\end{align}
Condition \eqref{BC1a} states that
the interface moves with the motion of the conducting fluid,
which renders the free interface $\Sigma(t)$ {\it characteristic}.
Condition \eqref{BC1b} results from the presence of surface tension and the balance of the normal stresses at the interface \cite{Delhaye1974}.
Note that the effect of surface tension becomes especially important in modelling the flows of liquid metals \cite{SDGX07MR2356385}.
Condition \eqref{BC1c} means that the fluid and vacuum magnetic fields are tangential to the interface.
Conditions \eqref{BC1d} are the perfectly conducting wall and impermeable conditions.
Energy flows into the system and the system is not isolated from the outside due to condition \eqref{BC1e} \cite[\S 4.6]{GKP19}.

We supplement \eqref{MHD1}--\eqref{pre-M1a} and \eqref{BC1} with the initial data
\begin{align} \label{IC1}
  \varphi|_{t=0}=\varphi_0\quad  \textrm{on } \mathbb{T}^2,\qquad
  U|_{t=0}=U_0 \quad  \textrm{in } \Omega^+(0),
\end{align}
where $\|\varphi_0\|_{L^{\infty}(\mathbb{T}^2)}<1$ and $U:=(q,v,H,S)^{\mathsf{T}}\in \mathbb{R}^8
$.
Remark that the equation \eqref{divH1} and the first boundary conditions in \eqref{BC1c}--\eqref{BC1d} can be regarded as initial constraints \cite[Appendix A]{T09ARMAMR2481071}.
It is important to point out that if the interface function is given, then the vacuum magnetic field ${h}$ can be uniquely determined from the elliptic problem \eqref{pre-M1a}, \eqref{BC1c} and \eqref{BC1e} \cite{BFG57}.

In the absence of surface tension,
{\sc Secchi}--{\sc Trakhinin} \cite{ST13MR3148595,ST14MR3151094} obtained the linear and nonlinear well-posedness for the free boundary problem \eqref{MHD1}--\eqref{pre-M1a}, \eqref{BC1}, and \eqref{IC1} with $\mathfrak{s}=0$,
provided the fluid and vacuum magnetic fields are not collinear at each point of the interface.
The non-collinearity condition stems from the stability analysis of compressible current-vortex sheets ({\it cf.}~\cite{CW08MR2372810,T05MR2187618,T09ARMAMR2481071}).
On the one hand, this condition enhances the regularity of the moving interface $\Sigma(t)$, since it allows to express 
$\p_t\varphi$ and $\nabla \varphi$ as functions
of the traces of the velocity and magnetic fields.
But on the other hand, it excludes some important cases such as the case of vanishing vacuum magnetic field.
Hence, in \cite{TW21MR4201624} we considered the free-boundary problem in ideal compressible MHD and proved the first local well-posedness result for the case of vanishing vacuum magnetic field under the generalized Taylor sign condition.
We mention that the local well-posedness is still unsolved for nontrivial vacuum magnetic field without the non-collinearity condition {(see \cite{T16MR3503661} for futher discussions).

It is known that surface tension provides a stabilizing effect on the motion of free vacuum interfaces; see, for instance, {\sc Coutand--Shkoller} \cite{CS07MR2291920} and {\sc Shatah--Zeng} \cite{SZ08MR2388661,SZ11MR2763036} for the incompressible Euler equations with surface tension, and {\sc Coutand et al.}~\cite{CHS13MR3139610} for the compressible isentropic case.
Motived by these works, the authors \cite{TW21b} recently investigated the free-boundary ideal compressible MHD equations with surface tension and constructed the unique solution without assuming any Taylor-type sign condition. 
The result obtained in \cite{TW21b} corresponds to the special case of vanishing vacuum magnetic field.
Therefore, it is natural to examine the stabilizing effect of surface tension on the evolution of moving interfaces in ideal compressible MHD for general vacuum magnetic field,
that is, to study the local well-posedness for problem \eqref{MHD1}--\eqref{pre-M1a}, \eqref{BC1}, and \eqref{IC1} with $\mathfrak{s}>0$.

Different from the nonlinear hyperbolic problem studied in \cite{TW21b},
the problem \eqref{MHD1}--\eqref{pre-M1a}, \eqref{BC1}, and \eqref{IC1} under consideration is a nonlinear hyperbolic-elliptic coupled problem with a characteristic free boundary. For its resolution, we consider very general constitutive relations satisfying the physical assumption that the sound speed is positive. The first step in our analysis is to reformulate the free-boundary problem \eqref{MHD1}--\eqref{pre-M1a}, \eqref{BC1}, and \eqref{IC1} into an equivalent fixed-boundary problem by use of a simple lift of the graph.

We establish the solvability and high-order energy estimates for the linearized problem around a certain basic state by passing to the limit from a well-chosen regularization.
For this purpose, we rewrite the linearized vacuum equations into a div-curl system,
so that we can introduce some suitable decompositions and scalar potential $\xi$ to obtain the reduced problem \eqref{ELP3} with homogeneous boundary conditions and homogeneous vacuum equations as in \cite{T16MR3503661}.
Similar to our previous work \cite{TW21b}, 
the $L^2$ estimate for the problem \eqref{ELP3} is not closed.
To deal with this situation, we design an elaborate $\varepsilon$--regularization that has a unique solution satisfying uniform-in-$\varepsilon$ energy estimates in certain Sobolev spaces of sufficiently large regularity.

More precisely, 
we add some terms in the fluid equations and the boundary conditions to close the $L^2$ estimates for both the regularization \eqref{Reg} and its dual problem \eqref{dual}.
This allows us to derive the existence of $L^2$ weak solutions to the $\varepsilon$--regularization for any small fixed parameter $\varepsilon>0$ by applying the duality argument.
Then we build for the regularized problem \eqref{Reg} high-order energy estimates uniformly in $\varepsilon$.
Noting that the fluid variables $W$ satisfy a symmetric hyperbolic problem with characteristic boundary,
as in \cite{TW21MR4201624,TW21b}, we work in the anisotropic Sobolev spaces $H_*^m$ with different regularity in the normal and tangential directions, first introduced by {\sc Chen} \cite{C07MR2289911} (see {\sc Secchi} \cite{S96MR1405665} for a general theory).
In the estimate of tangential derivatives ({\it cf.}~\S \ref{sec:tangential}),
we have to deal with the troublesome boundary  term, {\it i.e.}, the integral of $Q_{2c}$ ({\it cf.}~\eqref{Q2:es1}), due to the introduction of term $\varepsilon \Delta_{x'}^2\psi$ in the boundary condition \eqref{Reg.c}, where $\Delta_{x'}^2$ denotes the biharmonic operator in the tangential space coordinates.
To overcome this difficulty, we add another regularized term $-\varepsilon \Delta_{x'}^2\xi$ in the boundary condition \eqref{Reg.e}, so that the good term $\mathcal{J}_2$ ({\it cf.}~\eqref{Q2a:es1}--\eqref{J2:es}) can be used to estimate the integral of $Q_{2c}$ as in \eqref{Q2c:es}.
Moreover, for the regularized problem \eqref{Reg}, the $L^2(\Sigma_t)$-norm of $\p_t^m \psi$ rather than its instant $L^2(\Sigma)$-norm is controllable because of the boundary condition \eqref{Reg.c}.
As such, we consider the estimate for $Q_4$ in the cases with $\alpha_0<m$ and $\alpha=m$ separately; see \S \ref{sec:tangential} for the details.
The elliptic equation for the gradient of the potential $\xi$ helps to gain the normal derivatives of $\nabla\xi$, which along with the spatial boundary regularity enhanced from surface tension enables us to obtain high-order energy estimates for the $\varepsilon$--regularization \eqref{Reg} uniformly in $\varepsilon$.
Then we achieve the resolution and high-order energy estimate for the linearized problem \eqref{ELP1} by passing to the limit $\varepsilon\to 0$.

Remark that our energy estimate \eqref{tame} for the linearized problem is a so-called tame estimate, since it exhibits a {\it fixed} loss of regularity from the basic state and source terms to the solution.
Based on the linear well-posedness result, we can construct local solutions for the nonlinear problem by an appropriate modification of the Nash--Moser iteration scheme developed by {\sc H\"{o}rmander} \cite{H76MR0602181} and {\sc Coulombel--Secchi} \cite{CS08MR2423311}. In particular,
a smooth intermediate state should be introduced and estimated, so that the state around which we linearize at each iteration step can satisfy certain constraints for the linear solvability.




The manuscript is organized as follows. In Section \ref{sec:main}, we formulate the nonlinear problems and present the main result of this paper, {\it i.e.}, Theorem \ref{thm:main}. Section \ref{sec:linear} is devoted to proving the energy estimates and unique solvability for the linearized problem around a suitable basic state ({\it cf.}~Theorem \ref{thm:linear}). In Section \ref{sec:Nash-Moser}, we give the proof of local well-posedness for the nonlinear problem.

\section{Main Result}\label{sec:main}
In this section we reformulate the nonlinear free-boundary problem \eqref{MHD1}--\eqref{pre-M1a}, \eqref{BC1}, and \eqref{IC1} into an equivalent fixed-boundary problem,
state the main result of this paper, 
and present the notation for later use.

\subsection{Nonlinear problems}
We consider very general, smooth constitutive relations $\rho=\rho(p,S)$ and $\mathfrak{e}=\mathfrak{e}(p,S)$ for the perfectly conducting fluid.
All we need is that the sound speed $a: =  \sqrt{p_{\rho}  (\rho, S)}$ is positive for all $\rho\in (\rho_*,\rho^*)$,
where $\rho_*$ and $\rho^*$ are some nonnegative constants.
Thanks to the constraint \eqref{divH1} and the Gibbs relation, for smooth solutions $U=(q,v,H,S)^{\mathsf{T}}$,
we can reduce the fluid equations \eqref{MHD1} to the symmetric hyperbolic system
\begin{align}
  \label{MHD:vec}
  A_0^+(U)\p_t U+\sum_{i=1}^{3}A_i^+(U)\p_i U=0\qquad \textrm{in } \Omega^+(t),
\end{align}
where the coefficient matrices are defined as
\setlength{\arraycolsep}{4pt}
\begin{align}
\nonumber  
  &A_0^+(U):=
  \begin{pmatrix}
 \dfrac{1}{\rho a^2} & 0 & -\dfrac{1}{\rho a^2} H^{\mathsf{T}} & 0\\[2.5mm]
 0 & \rho {I}_3 & {O}_3 & 0\\[1.5mm]
 -\dfrac{1}{\rho a^2} H & {O}_3 & {I}_3+\dfrac{1}{\rho a^2}H\otimes H & 0\\[1.5mm]
 \w{0}  & \w{0}  & \w{0} & \w{1}
  \end{pmatrix},
\end{align}
and
\begin{align}
\nonumber  
  &A_i^+(U):=
  \begin{pmatrix}
 \dfrac{v_i}{\rho a^2} & \mathbf{e}_i^{\mathsf{T}} & -\dfrac{v_i}{\rho a^2} H^{\mathsf{T}} & 0\\[2.5mm]
 \mathbf{e}_i & \rho v_i {I}_3 & -H_i {I}_3 & 0\\[1.5mm]
 -\dfrac{v_i}{\rho a^2} H & -H_i {I}_3 & v_i{I}_3+\dfrac{v_i}{\rho a^2}H\otimes H & 0\\[1.5mm]
 \w{0}  & \w{0}  & \w{0} & \w{v_i}
  \end{pmatrix}
  \ \textrm{ for $i=1,2,3$.}
\end{align}
Here and below, ${O}_m$ and ${I}_{m}$ are the zero and identity matrices of order $m$, respectively,
$\mathbf{e}_1:=(1,0,0)^{\mathsf{T}}$, $\mathbf{e}_2:=(0,1,0)^{\mathsf{T}}$, and $\mathbf{e}_3:=(0,0,1)^{\mathsf{T}}$.
Moreover, it is convenient to rewrite the vacuum equations \eqref{pre-M1a} as
\begin{align}
  \label{pre-M:vec}
\sum_{i=1}^{3}A_i^-\p_i {h}=0\qquad \textrm{in } \Omega^-(t),
\end{align}
where $A_1^-$, $A_2^-$, and $A_3^-$ are the constant matrices given by
\begin{align}
\nonumber  
A_1^-:=\begin{pmatrix}
0 & 0 &0 \\
0 & 0 &-1 \\
0 & 1 &0 \\
1 & 0 &0
\end{pmatrix},
\quad
A_2^-:=\begin{pmatrix}
  0 & 0 &1 \\
  0 & 0 &0 \\
  -1 & 0 &0 \\
 0 & 1 &0
\end{pmatrix},
\quad
A_3^-:=\begin{pmatrix}
  0 & -1 &0 \\
  1 & 0 &0 \\
  0 & 0 &0 \\
  0 & 0 &1
\end{pmatrix}.
\end{align}

Let us reformulate the free-boundary problem \eqref{MHD:vec}, \eqref{pre-M:vec}, \eqref{BC1}, and \eqref{IC1} into an equivalent fixed-boundary problem.
To this end, we take the lifting function $\Phi$ as
\begin{align} \nonumber  
  \Phi(t,x):=x_1+ \chi( x_1)\varphi(t,x'),
\end{align}
where $\chi\in C^{\infty}_0(-1,1)$ 
is a cut-off function satisfying
\begin{align} \label{chi:def}
   \|\chi'\|_{L^{\infty}(\mathbb{R})} < \frac{4}{\|\varphi_0\|_{L^{\infty}(\mathbb{T}^2)}+3},
\qquad
  \chi\equiv 1\quad \textrm{on }[-\delta_0,\delta_0]
\end{align}
for some small constant $\delta_0>0$.
Then the change of variables $(t,x)\mapsto (\tilde{t},\tilde{x})$ with
$t=\tilde{t}$ and $x=(\Phi(\tilde{t},\tilde{x}),\tilde{x}_2,\tilde{x}_3)$
maps $\Sigma^{\pm}$  and $\Sigma(t)$ to $\Sigma^{\pm}$ and $\Sigma:=\{0\}\times \mathbb{T}^2$, respectively.
The domains $\Omega^{+}(t)$ and $\Omega^{-}(t)$ are mapped to 
$\Omega^+:=(0,1)\times \mathbb{T}^2$ and $\Omega^-:=(-1,0)\times \mathbb{T}^2$,
respectively.
Since $\|\varphi_0\|_{L^{\infty}(\mathbb{T}^2)}<1$, we know 
that $\|\varphi\|_{L^{\infty}([0,T]\times\mathbb{T}^2)}\leq \frac{1}{2}(\|\varphi_0\|_{L^{\infty}(\mathbb{T}^2)}+1)<1$ for some small $T>0$.
As a result, the change of variables is admissible on the time interval $[0,T]$.
Let us introduce
\begin{align*}
  \widetilde{U}(\tilde{t},\tilde{x})=U(t,x),\qquad
  \tilde{h}(\tilde{t},\tilde{x})={h}(t,x).
\end{align*}
Then the free-boundary problem \eqref{MHD:vec}, \eqref{pre-M:vec}, \eqref{BC1}, and \eqref{IC1} can be reduced to the following nonlinear fixed-boundary problem:
\begin{subequations} \label{NP1}
  \begin{alignat}{2}
 \label{NP1a}
 &\mathbb{L}_+(U,\Phi) :=L_+(U,\Phi)U =0
 & &\textrm{in  }  [0,T]\times
 \Omega^+,\\
 \label{NP1b}
&\mathbb{L}_-({h},\Phi) :=L_-(\Phi){h} =0
& &\textrm{in  }  [0,T]\times
\Omega^-,\\
 \label{NP1c}
 &  \p_t \varphi =v\cdot N,\quad
 q-\tfrac{1}{2}|{h}|^2=\mathfrak{s}\mathcal{H}(\varphi),\quad
 {h}\cdot N=0
 &&\textrm{on  }    [0,T]\times \Sigma,\\
 \label{NP1d}
 &v_1=0\qquad \textrm{on  }  [0,T]\times \Sigma^+, \qquad\quad
{h}\times \mathbf{e}_1=\bm{j}_{\rm c}\qquad   & &\textrm{on  }  [0,T]\times \Sigma^-,\\
 \label{NP1e}
 &(U,\varphi)|_{t=0}=(U_0,\varphi_0),  &  &
  \end{alignat}
\end{subequations}
where we drop the tildes for notational simplicity.
The operators $L_{\pm}$ are defined as
\begin{align}
  \label{L+:def}
  &L_+(U,\Phi):=A_0^+(U)\partial_t+\widetilde{A}_1^+(U,\Phi)\partial_1+ A_2^+(U)\partial_2
  +A_3^+(U)\partial_3,\\
  \label{L-:def}
  &L_-(\Phi):=\widetilde{A}_1^-(\Phi)\partial_1+ A_2^-\partial_2+A_3^-\partial_3,
\end{align}
with $\widetilde{A}_1^-(\Phi):=  (A_1^--\partial_2\Phi A_2^--\partial_3\Phi A_3^-)/\partial_1\Phi$ and
\begin{align}
\nonumber   
  &\widetilde{A}_1^+(U,\Phi):=
  \frac{1}{\partial_1\Phi}\big(A_1^+(U)-\partial_t\Phi A_0^+(U)-\partial_2\Phi A_2^+(U)-\partial_3\Phi A_3^+(U)\big). 
\end{align}
For later use, we introduce the following boundary operators ({\it cf.}~\eqref{NP1c}--\eqref{NP1d}):
\begin{align}
&\label{B.bb:def}
\mathbb{B}_+(U,h,\varphi):=
\begin{pmatrix}
 \p_t \varphi -v\cdot N\\
 q-\frac{1}{2}|{h}|^2-\mathfrak{s}\mathcal{H}(\varphi)\\
 v_1
\end{pmatrix}, \quad
\mathbb{B}_-(h,\varphi):=
\begin{pmatrix}
 {h}\cdot N\\
{h}\times \mathbf{e}_1-\bm{j}_{\rm c}
\end{pmatrix}.
\end{align}

\subsection{Main result}
In the new coordinates, the equation \eqref{divH1} and the first conditions in \eqref{BC1c}--\eqref{BC1d} become
\begin{align}
 \label{inv1b}
 &  
 \frac{\p_1 H_1}{\partial_1\Phi}
 +\sum_{i=2}^3\Big(\partial_i -\frac{\partial_i\Phi }{\partial_1\Phi}\partial_1\Big)H_i =0\qquad  \textrm{in  }  [0,T]\times \Omega^+,
 \\[0.5mm]
 \label{inv2b}
 &  H\cdot N=0\quad \textrm{on }  [0,T]\times \Sigma,
 \qquad H_1=0\quad \textrm{on }  [0,T]\times \Sigma^+.
\end{align}
These last identities \eqref{inv1b}--\eqref{inv2b} are satisfied as long as they hold at the initial time,
for which the proof can be found in \cite[Appendix A]{T09ARMAMR2481071}.

Denote by $\lfloor s \rfloor$ the floor function of $s\in\mathbb{R}$ that maps $s$ to the greatest integer less than or equal to $s$.
We now state the main result of this paper, that is,
the local well-posedness theorem for nonlinear problem \eqref{NP1}.
\begin{theorem}
\label{thm:main}
Let $\bm{j}_{\rm c}\in H^{m+3/2}([0,T_0]\times \Sigma^-)$ for some $T_0>0$ and integer $m\geq 20$.
Suppose that the initial data $(U_0,\varphi_0)\in H^{m+3/2}(\Omega^+)\times H^{m+2}(\mathbb{T}^{2})$
satisfy $\|\varphi_0\|_{L^{\infty}(\mathbb{T}^2)}<1$, the constraints \eqref{inv1b}--\eqref{inv2b}, the hyperbolicity condition
\begin{align}
 \label{hyperbolicity}
 \rho_*<\inf_{\Omega^+} \rho(U_0)\leq \sup_{\Omega^+} \rho(U_0)<\rho^*,
\end{align}
and the compatibility conditions up to order $m$ \textnormal{(}see Definition \textnormal{\ref{def:1})}.
Then there exists a sufficiently small time $T>0$, such that the problem \eqref{NP1} has a unique solution $(U,h, \varphi)$ in $H^{\lfloor (m-9)/2\rfloor}([0,T]\times\Omega^+)\times H^{m-9}([0,T]\times\Omega^-)\times H^{m-9}([0,T]\times\mathbb{T}^{2})$ satisfying $\mathrm{D}_{x'}\varphi\in H^{m-9}([0,T]\times\mathbb{T}^{2})$.
\end{theorem}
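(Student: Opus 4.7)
The plan is to reduce the local well-posedness of the nonlinear problem \eqref{NP1} to a Nash--Moser iteration driven by the tame energy estimate for the linearized problem (Theorem \ref{thm:linear}). Before iterating, I would first reduce the problem to one with zero initial data. To this end, using the compatibility conditions up to order $m$, I would construct a smooth approximate solution $(U^a, h^a, \varphi^a)$ whose time derivatives at $t=0$ match those forced by the equations \eqref{NP1a}--\eqref{NP1b} and the boundary conditions \eqref{NP1c}--\eqref{NP1d}, together with the initial constraints \eqref{inv1b}--\eqref{inv2b}. Setting $f^a := -\mathbb{L}_+(U^a, \Phi^a)$ and $g^a := -\mathbb{B}_+(U^a, h^a, \varphi^a)|_\Sigma$ and analogous vacuum source terms, the problem then becomes one of finding $(\delta U, \delta h, \delta \varphi)$ with trivial initial data solving an inhomogeneous version of \eqref{NP1} whose source terms vanish to order $m$ at $t=0$, so that $(f^a, g^a, \ldots)$ can be extended by zero to negative times while keeping the required regularity.

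Next, I would set up the Nash--Moser scheme in the spirit of H\"ormander \cite{H76MR0602181} and Coulombel--Secchi \cite{CS08MR2423311}. Let $\{S_\theta\}_{\theta\ge 1}$ denote a family of smoothing operators on the anisotropic Sobolev scale $H_*^m$ for the fluid variables and on standard scales for $h$ and $\varphi$, with the usual tame/inverse inequalities. The iterates $(U_n, h_n, \varphi_n) = \sum_{k<n}(\delta U_k, \delta h_k, \delta \varphi_k)$ are defined through $(\delta U_n, \delta h_n, \delta \varphi_n)= S_{\theta_n}(\widetilde{\delta U}_n, \widetilde{\delta h}_n, \widetilde{\delta \varphi}_n)$, where the unsmoothed increments solve the linearized problem \eqref{ELP1} around a \emph{modified} basic state $(U_{n+1/2}, h_{n+1/2}, \varphi_{n+1/2})$. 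The modification is essential: since Theorem \ref{thm:linear} requires the background to satisfy several structural constraints (such as the divergence constraint \eqref{inv1b}, the tangency of $H$ to $\Sigma$, and the transport boundary condition relating $\partial_t\varphi$ to $v\cdot N$), the naively smoothed state $S_{\theta_n}(U_n, h_n, \varphi_n)$ will not. I would therefore define $(U_{n+1/2}, h_{n+1/2}, \varphi_{n+1/2})$ by correcting the smoothed state with small, explicitly computable terms so that the required constraints hold exactly, following the construction used in \cite{TW21MR4201624, TW21b}.

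The right-hand sides at the $n$-th step are $(f_n, g_n)$ defined inductively through the decomposition
\begin{equation*}
\mathbb{L}_+(U_{n+1},\Phi_{n+1})-\mathbb{L}_+(U_n,\Phi_n)
= \mathbb{L}'_+(U_{n+1/2},\Phi_{n+1/2})(\delta U_n,\delta\varphi_n) + e_n',
\end{equation*}
together with a similar expansion for $\mathbb{B}_\pm$ and for $\mathbb{L}_-$, and $f_n := -S_{\theta_n}E_n$ where $E_n$ collects the accumulated quadratic, substitution, and smoothing errors. Using the tame estimate \eqref{tame} with its \emph{fixed} loss of derivatives and carefully tracking the decay of these error terms in a higher $H^\alpha$ norm versus their growth in the top norm $H^m$, one establishes by induction the key control
\begin{equation*}
\|(\delta U_n,\delta h_n,\delta \varphi_n)\|_{H^\alpha} \le C\theta_n^{\alpha - s - 1}\Delta_n,\qquad \Delta_n := \theta_{n+1}-\theta_n,
\end{equation*}
for $\alpha$ in an appropriate range, provided the initial source terms $(f^a,g^a,\ldots)$ are small enough, which is ensured by taking $T$ sufficiently small.

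The hardest part will be the bookkeeping of the Nash--Moser loss. Three features must be harmonized: (i) the anisotropic nature of the fluid estimate (normal vs.\ tangential regularity in $H_*^m$), which forces separate smoothing operators and distinct tame inequalities for $U$, $h$, and $\varphi$; (ii) the fact that $\mathrm{D}_{x'}\varphi$ enjoys the extra spatial regularity gained from surface tension, so that the a priori estimate \eqref{tame} must be inverted in a scale that tracks this gain; and (iii) the modification procedure, which has to be shown to introduce only higher-order errors consistent with the induction. Once these ingredients are assembled, the iteration $(U_n, h_n, \varphi_n)$ converges in $H^{\lfloor (m-9)/2\rfloor}\times H^{m-9}\times H^{m-9}$ (the loss of $9$ derivatives is dictated by the fixed loss in \eqref{tame} and the choice of $\alpha$ in the induction), and the limit, together with the approximate solution, yields the desired solution. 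Uniqueness follows from a standard $L^2$-type energy estimate for the difference of two solutions, performed on the fixed-boundary formulation and exploiting the symmetric structure of \eqref{MHD:vec} together with the dissipative contribution of the surface-tension term in \eqref{NP1c}.
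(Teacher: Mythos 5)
Your proposal follows the same high-level route as the paper: construct an approximate solution using the compatibility conditions to reduce to zero initial data, then run a Nash--Moser scheme of H\"{o}rmander/Coulombel--Secchi type driven by the tame estimate \eqref{tame}, with a modified intermediate state enforcing the structural constraints \eqref{bas1a}--\eqref{bas1f} required by Theorem \ref{thm:linear}, and close the induction by tracking quadratic, substitution, and last (Alinhac) error terms. Two points of the description, however, do not match the paper's scheme and would change the bookkeeping.

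First, you write that the increments are produced by smoothing the solutions of the linearized problem, $(\delta U_n,\delta h_n,\delta\varphi_n)=S_{\theta_n}(\widetilde{\delta U}_n,\widetilde{\delta h}_n,\widetilde{\delta\varphi}_n)$. The H\"{o}rmander-type scheme used in the paper (cf.\ \eqref{effective.NM}, \eqref{source}) does \emph{not} smooth the increments: $\delta\dot V_n$ is taken directly as the solution of the effective linear problem around the modified state, and the mollification appears only (i) in the construction of the modified state $(V_{n+1/2},h_{n+1/2},\psi_{n+1/2})$ from the smoothed state $\mathcal{S}_{\theta_n}(V_n,h_n,\psi_n)$, and (ii) in the definition of the source terms through the accumulated errors via $\sum_{k\le n}f_k^+ +\mathcal{S}_{\theta_n}E_n^+=\mathcal{S}_{\theta_n}f^a$ and its analogues. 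Smoothing the increment would introduce an extra error $(I-S_{\theta_n})\widetilde{\delta U}_n$ that is controlled only in a higher norm than what the tame estimate delivers, and the inductive hypothesis $(\mathbf{H}_{n-1})$ would have to be reorganized to absorb it. It is not unfixable, but it is a genuinely different (Moser-style rather than H\"{o}rmander-style) variant from the one the error estimates of Lemmas \ref{lem:quad}--\ref{lem:sum2} are calibrated for, and as written your induction statement would not close with the stated exponents.

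Second, you propose nonzero boundary and vacuum source terms $g^a$, ``and analogous vacuum source terms.'' In the paper the approximate solution is constructed in Lemma \ref{lem:app} so that $\mathbb{L}_-(h^a,\Phi^a)=0$, $\mathbb{B}_+(U^a,h^a,\varphi^a)=0$, and $\mathbb{B}_-(h^a,\varphi^a)=0$ hold exactly on $[0,T]$ (cf.\ \eqref{app1b}--\eqref{app1d}); only the fluid interior equation leaves a residual $f^a$, and this residual vanishes to order $m-1$ at $t=0$ thanks to \eqref{app1a}, which is what allows the extension by zero to $t<0$ while keeping $H^m$ regularity. Starting from a formulation with nonzero $g^a$ and a nonzero vacuum residual is unnecessary extra baggage and obscures the fact that the only forcing in the scheme is $f^a$ (mollified), which is also what makes the smallness assumption $\frac{1}{\boldsymbol{\epsilon}}\|f^a\|_{H_*^\alpha(\Omega_T^+)}\ll 1$ in Lemmas \ref{lem:Hl1}--\ref{lem:Hl2} achievable by taking $T$ small. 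Aside from these two points, the rest of the proposal (the role of Alinhac's good unknowns, the modified state to satisfy \eqref{bas1a}--\eqref{bas1f}, the anisotropic fluid scale, and the count giving loss of $9$ derivatives from $m\geq 20$) is in line with the paper.
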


\subsection{Notation}
We adopt the following notation throughout the paper:
\begin{list}{}{
  \setlength{\topsep}{1.5mm} 
  \setlength{\parsep}{\parskip} 
  \setlength{\partopsep}{0mm} 
  \setlength{\itemsep}{1.5mm} 
  \setlength{\labelwidth}{2em} 
  \setlength{\labelsep}{0.5em} 
  \setlength{\leftmargin}{2.5em} 
  \setlength{\itemindent}{0em}   
  \setlength{\listparindent}{0em}      
 }
  \item [(\rm a)]
We write $C$ for some universal positive constant
and $C(\cdot)$ for some positive constant depending on the quantities listed in the parenthesis.
Symbol $A\lesssim B$ denotes $A \leq C B$,
while $A\lesssim_{a_1,\ldots,a_m} B$ means that $A \leq C(a_1,\ldots,a_m)B$
for given parameters $a_1,\ldots,a_m$.
\item[(\rm b)]
Recall that $\Sigma^{\pm}:=\{\pm 1\}\times\mathbb{T}^2$ are
the boundaries of the reference domain $\Omega:=(-1,1)\times\mathbb{T}^2$
and $\Sigma:=\{0\}\times\mathbb{T}^2$ is the common boundary of
$\Omega^{\pm}:=\{0<\pm x_1<1\}\cap \Omega$.
For $T>0$, we set $\Omega_T:=(-\infty,T)\times \Omega$,
$\Sigma_T:=(-\infty,T)\times \Sigma,$ and
\begin{alignat*}{3}
\Omega_T^{\pm}:=(-\infty,T)\times \Omega^{\pm},
\quad \Sigma_T^{\pm}:=(-\infty,T)\times \Sigma^{\pm}.
\end{alignat*}
\item[(\rm c)]
We denote by $\p_t$ (or $\p_0$)
the time derivative $\frac{\p}{\p t}$.
Set
$\nabla:=(\p_1,\p_2,\p_3)^{\mathsf{T}}$
and $\mathrm{D}:=(\p_0,\p_1,\p_2,\p_3)^{\mathsf{T}}$,
where $\p_i:=\frac{\p}{\p x_i}$ for $i=1,2,3$.
For any  $\beta=(\beta_1,\ldots,\beta_n)\in\mathbb{N}^n$ and   $z=(z_1,\ldots,z_n)\in\mathbb{R}^n$,
we define
\begin{align*}
  &\beta!:=\beta_1!\cdots \beta_n!,
  \quad |\beta|:=\beta_1+\cdots+\beta_n,
  \quad z^{\beta}:=z_1^{\beta_1}\cdots z_n^{\beta_n},
  \\
  &\mathrm{D}_{z}:=\left(\frac{\p}{\p z_1},\ldots, \frac{\p}{\p z_n} \right)^{\mathsf{T}},
  \quad
  \mathrm{D}_{z}^{\beta}:=\left(\frac{\p}{\p z_1}\right)^{\beta_1}\cdots
  \left(\frac{\p}{\p z_n} \right)^{\beta_n}.
\end{align*}

\item[(\rm d)] We abbreviate $z':=(z_2,z_3)^{\mathsf{T}}$ for $z:=(z_1,z_2,z_3)^{\mathsf{T}}$,
so that $x':=(x_2,x_3)$.
We use $\mathrm{D}_{x'}:=(\p_2,\p_3)^{\mathsf{T}}$,
$\Delta_{x'}:=\mathrm{D}_{x'}\cdot \mathrm{D}_{x'}$, and $\Delta_{x'}^2:=\Delta_{x'}\Delta_{x'}$
to denote the gradient, Laplacian, and biharmonic operators in the tangential space coordinates $x'$, respectively.
For any integer $m\geq 2$,  we use
$  \mathrm{D}_{x'}^{m}:=(\p_2^m,\p_2^{m-1}\p_3,\ldots,\p_2 \p_3^{m-1}, \p_3^m)^{\mathsf{T}} $
to represent the vector of all partial derivatives in $x'$ of order $m$.

\item[(\rm e)] We denote
$  \mathrm{D}_*^{\alpha}:=\p_t^{\alpha_0} (\sigma \p_1)^{\alpha_1}\p_2^{\alpha_2}  \p_3^{\alpha_3} \p_1^{\alpha_{4}}$
for $ \alpha:=(\alpha_0,\ldots,\alpha_{4})\in\mathbb{N}^{5}$ and $\sigma:=x_1(1-x_1)$.
For $m\in\mathbb{N}$ and $I\subset \mathbb{R}$, the anisotropic Sobolev space $H_*^{m}(I\times \Omega^+)$ is defined as
\begin{align*}
  &H_*^m(I\times\Omega^+):=
  \{ u\in L^2(I\times\Omega^+):\, \mathrm{D}_*^{\alpha} u\in L^2(I\times\Omega^+)
  \textrm{ for } \langle \alpha \rangle\leq m  \},
\end{align*}
and equipped with the norm ${\|}\cdot{\|}_{H^m_*(I\times \Omega^+)}$, where
\begin{align}  \nonumber 
  {\|}u{\|}_{H^m_*(I\times \Omega^+)}^2:=
\sum_{\langle \alpha\rangle\leq m} \|\mathrm{D}_*^{\alpha} u\|_{L^2(I \times\Omega^+)}^2,
\quad
\langle \alpha \rangle :=\sum_{i=0}^3\alpha_i +2\alpha_{4}.
\end{align}
\item[(\rm e)]
For any $m\in \mathbb{N}$, we denote by $\mathring{\rm c}_m$
a generic and smooth matrix-valued function of
$\{(\mathrm{D}^{\alpha} \mathring{U},\mathrm{D}^{\alpha} \mathring{{h}},\mathrm{D}^{\alpha}\mathring{\Psi}
):|\alpha|\leq m\}$,
where $\mathrm{D}^{\alpha}:=\p_t^{\alpha_0} \p_1^{\alpha_1}\p_2^{\alpha_2} \p_3^{\alpha_3} $ for $\alpha:=(\alpha_0,\ldots,\alpha_{3})\in\mathbb{N}^{4}$.
The exact form of $\mathring{\rm c}_m$ may vary at different places.
\end{list}

\section{Linear Well-posedness}\label{sec:linear}
This section is devoted to showing the high-order energy estimates and unique solvability for the linearization of the problem \eqref{NP1} around a suitable basic state $(\mathring{U},\mathring{{h}},\mathring{\varphi})$.

\subsection{Main theorem for the linearized problem}
We assume that the basic state $(\mathring{U}(t,x),\mathring{{h}}(t,x),\mathring{\varphi}(t,x'))$
with $\mathring{U}=(\mathring{q} ,\mathring{v},\mathring{H},\mathring{S})^{\mathsf{T}}$
and $\mathring{{h}}=(\mathring{h}_1,\mathring{h}_2,\mathring{h}_3)^{\mathsf{T}}$
is sufficiently smooth
and satisfies
\begin{alignat}{3}
  \label{bas1a}
&\|\mathring{\varphi}\|_{L^{\infty}(\Sigma_T)}\leq
\tfrac{1}{2}(\|{\varphi}_0\|_{L^{\infty}( \mathbb{T}^2)} +1) <1,
\\
  \label{bas1b}
  &
  \rho_*<  \rho(\mathring{U})<\rho^*\qquad \textrm{on }\overline{\Omega_T^+},\\
  \label{bas1c}
 & \|\mathring{U}\|_{W^{3,\infty}(\Omega_T^+)}+
 \|\mathring{{h}}\|_{W^{3,\infty}(\Omega_T^-)}+
 \|\mathring{\varphi}\|_{W^{4,\infty}(\Sigma_T)}\leq K
\end{alignat}
for some constant $K>0$.
We also assume that
\begin{alignat}{3}
\label{bas1d}
&\p_t \mathring{\varphi}=\mathring{v}\cdot \mathring{N},
\quad \mathring{{h}}\cdot \mathring{N}=0,
\quad \mathring{H}\cdot \mathring{N}=0
\qquad &&\textrm{on }\Sigma_T,\\	
\label{bas1e}
&  \p_1 \mathring{{h}}\cdot \mathring{N}+\p_2\mathring{h}_2+\p_3\mathring{h}_3=0
\quad &&\textrm{on }\Sigma_T,
\\
\label{bas1f}
&
\mathring{H}_1=0,\quad
\mathring{v}_1=0
\quad  \textrm{on }\Sigma_T^+,
\qquad
\mathring{{h}}\times \mathbf{e}_1 =\bm{j}_{\rm c}
\quad &&\textrm{on }\Sigma_T^-,
\end{alignat}
where
$\mathring{N}:=(1,-\p_2\mathring{\varphi},-\p_3\mathring{\varphi})^{\mathsf{T}}$.
Constraint \eqref{bas1e} comes from restricting the last equation in \eqref{NP1b} on boundary $\Sigma_T$ for the basic state.
Moreover, $\p_1 \mathring{\Phi}>0$ on $\overline{\Omega_T}$ for
\[
\mathring{\Phi}(t,x):=x_1+\mathring{\Psi}(t,x),\qquad
\mathring{\Psi}(t,x):=\chi(x_1)\mathring{\varphi}(t,x').
\]

Let us introduce the good unknowns of {\sc Alinhac} \cite{A89MR976971}:
\begin{align} \label{good}
  \dot{V}:=V-\frac{\Psi}{\partial_1 \mathring{\Phi}}\partial_1\mathring{U},
\quad \dot{{h}}:={h}-\frac{\Psi}{\p_1 \mathring{\Phi}}\p_1 \mathring{{h}},
\end{align}
for $V=(q,v,H,S)^{\mathsf{T}} $ and $\Psi(t,x):=\chi(x_1)\psi(t,x')$.
Then the linearized operators for the interior equations \eqref{NP1a}--\eqref{NP1b} around the basic state $(\mathring{U},\mathring{{h}},\mathring{\varphi})$ are defined and simplified as
\begin{align}
  \mathbb{L}'_+\big(\mathring{U},\mathring{\Phi}\big)(V,\Psi)
 :=&\left.\frac{\mathrm{d}}{\mathrm{d}\theta}
  \mathbb{L}_{+}\big(\mathring{U} +{\theta}V ,
  \mathring{\Phi} +{\theta}\Psi \big)\right|_{\theta=0}
\nonumber%
  \\
  =& \,{L}_{+}\big(\mathring{U},\mathring{\Phi}\big)\dot{V}+\mathcal{C}_{+}(\mathring{U},\mathring{\Phi})\dot{V}  +\frac{\Psi}{\partial_1\mathring{\Phi}}
  \partial_1\mathbb{L}_+(\mathring{U},\mathring{\Phi} ),
  \label{Alinhac1}
\\[0.5mm]
 \mathbb{L}_{-}'\big(\mathring{{h}},\mathring{\Phi}\big)({h},\Psi)
 :=& \left.\frac{\mathrm{d}}{\mathrm{d}\theta} \mathbb{L}_{-}\big(\mathring{{h}} +{\theta}{h} , \mathring{\Phi} +{\theta}\Psi \big)\right|_{\theta=0}
=L_-\big(\mathring{\Phi}\big)\dot{{h}} +\frac{\Psi}{\partial_1\mathring{\Phi}}\partial_1\mathbb{L}_-(\mathring{{h}},\mathring{\Phi} ),
\label{Alinhac2}
\end{align}
where
\begin{align} \label{C.cal+:def}
  \mathcal{C}_+({U},{\Phi})V:=
  \sum_{k=1}^{8}V_k\Bigg(
   \frac{\p \widetilde{A}^{+}_1}{\p {U_k}}({U},{\Phi}) \partial_1 {U}
  +\sum_{i=0,2,3}\frac{\p A^{+}_i}{\p {U_k}}({U}) \partial_i {U}
  \Bigg).
\end{align}
To linearize the boundary conditions \eqref{NP1c},
we recall \eqref{H.cal:def} and calculate
\begin{align}
  \left.\frac{\mathrm{d}}{\mathrm{d}\theta} \mathcal{H}(\mathring{\varphi} +\theta \psi)\right|_{\theta=0}=
  \mathrm{D}_{x'}\cdot
  \frac{\mathrm{d}}{\mathrm{d}\theta}
  \bigg(\frac{\mathrm{D}_{x'}(\mathring{\varphi} +\theta \psi)}{\sqrt{1+|\mathrm{D}_{x'}(\mathring{\varphi} +\theta \psi)|^2}}\bigg)
  \bigg|_{\theta=0}
=\mathrm{D}_{x'}\cdot
  \big(\mathring{B}\mathrm{D}_{x'}\psi \big),
\nonumber
\end{align}
where $\mathring{B}$ is the positive definite matrix defined by
\begin{align}
\label{B.ring:def}
  \mathring{B}:=
\frac{I_3}{|\mathring{N}|}
- \frac{\mathrm{D}_{x'}\mathring{\varphi}\otimes \mathrm{D}_{x'}\mathring{\varphi}}{|\mathring{N}|^3}.
\end{align}
Consequently, for the boundary operators $\mathbb{B}_{\pm}$ defined by \eqref{B.bb:def}, we have
\begin{align}
 \nonumber
& \mathbb{B}_+'(\mathring{U} ,\mathring{h}, \mathring{\varphi})(V,h,\psi)
 :=\left.\frac{\mathrm{d}}{\mathrm{d}\theta}
 \mathbb{B}_+\big(\mathring{U} +\theta V ,\,\mathring{h} +\theta h ,\,\mathring{\varphi} +\theta \psi \big) \right|_{\theta=0}\\
&\qquad\qquad\qquad\qquad\ \  =  \begin{pmatrix}
  (\p_t + \mathring{v}' \cdot \mathrm{D}_{x'}) \psi-v\cdot\mathring{N}\\[0.5mm]
  q-\mathring{h}\cdot h-\mathfrak{s} \mathrm{D}_{x'}\cdot
  \big(\mathring{B}\mathrm{D}_{x'}\psi \big)\\[0.5mm]
  v_1
 \end{pmatrix},
 \label{B'+.bb:def}\\
&\mathbb{B}_-'(\mathring{h}, \mathring{\varphi})(h,\psi)
:=\left.\frac{\mathrm{d}}{\mathrm{d}\theta}
\mathbb{B}_-\big(\mathring{h} +\theta h ,\,\mathring{\varphi} +\theta \psi \big) \right|_{\theta=0}
= \begin{pmatrix}
 h\cdot\mathring{N}-  \mathring{h}' \cdot \mathrm{D}_{x'} \psi\\[0.5mm]
h\times \mathbf{e}_1
\end{pmatrix},
\label{B'-.bb:def}
\end{align}
where we denote $z':=(z_2,z_3)^{\mathsf{T}}$ for any vector $z:=(z_1,z_2,z_3)^{\mathsf{T}}$.

Apply the good unknowns \eqref{good}
and neglect the last terms in \eqref{Alinhac1}--\eqref{Alinhac2}
to obtain the following effective linear problem:
\begin{subequations} \label{ELP1}
  \begin{alignat}{3}
 &\mathbb{L}'_{e+}(\mathring{U}, \mathring{\Phi}) \dot{V}
 :=L_+(\mathring{U}, \mathring{\Phi})\dot{V}
 +\mathcal{C}_+( \mathring{U},\mathring{\Phi})\dot{V} =f^+
 &\quad  &\textrm{in } \Omega^+_T,
 \label{ELP1a}\\
 &L_-( \mathring{\Phi}) \dot{{h}}=f^-
&\quad  &\textrm{in } \Omega^-_T,
\label{ELP1b}\\
 \label{ELP1c}   &
  \mathbb{B}'_{e+}(\mathring{U}, \mathring{{h}}, \mathring{\varphi}) (\dot{V},\dot{{h}},\psi)
=g^+
  &&\textrm{on } \Sigma_T^2\times \Sigma_{T}^+,\\
 \label{ELP1d}
&   \mathbb{B}'_{e-}( \mathring{{h}}, \mathring{\varphi}) ( \dot{{h}},\psi)
=g^-
&&\textrm{on } \Sigma_T\times \Sigma_{T}^-,\\
 &(\dot{V},\psi)=0,
\qquad \dot{{h}}=0
    && \textrm{if } t<0,
 \label{ELP1e}
  \end{alignat}
\end{subequations}
where operators $L_{\pm}$ and $\mathcal{C}_+$ are defined by \eqref{L+:def}--\eqref{L-:def} and \eqref{C.cal+:def}, respectively.
According to the identities
\begin{align*}
 \mathbb{B}'_{e+}(\mathring{U}, \mathring{{h}}, \mathring{\varphi}) (\dot{V},\dot{{h}},\psi)
 =\mathbb{B}_+'(\mathring{U} ,\mathring{h}, \mathring{\varphi})(V,h,\psi)\ \textrm{ and }\
 \mathbb{B}'_{e-}(\mathring{{h}}, \mathring{\varphi}) (\dot{{h}},\psi)
 =\mathbb{B}_-'(\mathring{h}, \mathring{\varphi})(h,\psi),
\end{align*}
we get from definition \eqref{good} and constraint \eqref{bas1e} that
\begin{align}
\label{B'e+:def}
&\mathbb{B}'_{e+}(\mathring{U}, \mathring{{h}}, \mathring{\varphi}) (\dot{V},\dot{{h}},\psi)
:=\begin{pmatrix}
(\p_t  + \mathring{v}' \cdot \mathrm{D}_{x'}+\mathring{b}_1)\psi-\dot{v}\cdot\mathring{N}  \\[1mm]
\dot{q}-\mathring{{h}}\cdot \dot{{h}}
-\mathfrak{s} \mathrm{D}_{x'}\cdot
\big(\mathring{B}\mathrm{D}_{x'}\psi \big)+\mathring{b}_2  \psi \\[1mm]
\dot{v}_1
\end{pmatrix},\\[1mm]
\label{B'e-:def}
&\mathbb{B}'_{e-}(\mathring{{h}}, \mathring{\varphi}) (\dot{{h}},\psi)
:=\begin{pmatrix}
\dot{{h}}\cdot \mathring{N}
 -\mathrm{D}_{x'}\cdot(\mathring{h}'  \psi)
 \\[1mm]
\dot{{h}}\times \mathbf{e}_1
\end{pmatrix},
\end{align}
for $\mathring{b}_1:=-\p_1 \mathring{v}\cdot \mathring{N}$ and
$\mathring{b}_2:=\p_1\mathring{q}-\mathring{{h}}\cdot \p_1\mathring{{h}}.$
In \eqref{ELP1c} we employ the notation $\Sigma_{T}^2\times \Sigma_{T}^+$ to denote that
the first two components of this vector equation are taken on $\Sigma_{T}$ and the third component on $\Sigma_{T}^+$. The similar notation applies also for \eqref{ELP1d}.
The source terms $f^{\pm}$ and $g^{\pm}$ are supposed to vanish in the past, so that the second equation in \eqref{ELP1e} follows from \eqref{ELP1b}, \eqref{ELP1d}, and the first equation in \eqref{ELP1e}.

The well-posedness result for the effective linear problem \eqref{ELP1} is presented in the following theorem. Hereafter, we use the short-hand notation
\begin{align}
\|({U},{h},{\varphi})\|_{m} :=\, & \|{U} \|_{H_*^{m}(\Omega_{T}^+)}+\|{h} \|_{H^{m}(\Omega_{T}^-)} +\|{\varphi} \|_{H^{m}(\Sigma_{T})},  \label{norm:ring}
\\
\|(g^+,g^-)\|_{ H^{m} \times H^{m+1} }:=\,&  \|(g_1^+,g_2^+) \|_{H^{m}(\Sigma_{T})} +\|g_3^+\|_{H^{m}(\Sigma_{T}^+) }   \nonumber \\
& +\|g_1^-\|_{  H^{m+1}(\Sigma_{T}) }+\|g_2^-\|_{  H^{m+1}(\Sigma_{T}^-) }.
 \label{norm:g}
\end{align}
\begin{theorem}
\label{thm:linear}
Let $K_0>0$ and $m\in\mathbb{N}$ with $m\geq 6$.
Then there exist constants $T_0>0$ and $C(K_0)>0$, such that if
$(\mathring{U},\mathring{h},\mathring{\varphi}) \in H_*^{{m+4}}(\Omega_T^+)\times H^{{m+4}}(\Omega_T^-) \times  H^{{m+4}}(\Sigma_T)$ satisfies \eqref{bas1a}--\eqref{bas1f} and
\begin{align}  \label{H:thm.linear}
{\|}\mathring{U}{\|}_{H_*^{{10}}(\Omega_T^+)} +\| \mathring{h}\|_{H^{{10}}(\Omega_T^-)} +\|\mathring{\varphi}\|_{H^{{10}}(\Sigma_T)}\leq {K}_0,
\end{align}
and source terms $(f^+,f^-)\in H_*^{m}(\Omega_T^+)\times H^{m+1}(\Omega_T^-) $,
$g^+ \in  H^{m+1}(\Sigma_{T})^2\times  H^{m+1}(\Sigma_{T}^+)$, and $g^-\in  H^{m+2}(\Sigma_{T})\times H^{m+2}(\Sigma_{T}^-)$
vanish in the past and satisfy the compatibility conditions \eqref{h.nat:com} for some $0<T\leq T_0$,
then problem \eqref{ELP1} admits a unique solution $(\dot{V},\dot{h},\psi)\in H_*^{m}(\Omega_T^+)\times H^{m}(\Omega_T^-)\times H^{m}(\Sigma_T)$
satisfying
\begin{align}
\nonumber
&\|\dot{V}\|_{H_*^{m}(\Omega_T^+)}+
\|\dot{h}\|_{H^{m}(\Omega_T^-)}+\|\Psi\|_{H^{m}(\Omega_T)}+
\|(\psi,\mathrm{D}_{x'}\psi)\|_{H^{m}(\Sigma_T)}\\
&\quad \lesssim_{K_0}
\|(\mathring{U},\mathring{h},\mathring{\varphi})\|_{m+4}\left(\|f^+  \|_{H_*^{6}(\Omega_T^+) }+\|f^-  \|_{H^{7}(\Omega_T^-) }+\|(g^+,g^-)\|_{H^{7} \times H^{8} }\right)
\nonumber\\
& \qquad \quad\  +\|f^+  \|_{H_*^{m}(\Omega_T^+) }+\|f^-  \|_{H^{m+1}(\Omega_T^-) }
+\|(g^+,g^-)\|_{H^{m+1} \times H^{m+2} }.
  \label{tame}
\end{align}
\end{theorem}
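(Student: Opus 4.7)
The plan is to follow a regularize--estimate--limit strategy in the spirit of the introduction. First I would reduce the vacuum part of \eqref{ELP1} to a scalar elliptic problem. Since $L_-(\mathring\Phi)\dot h$ encodes (up to coefficient terms from $\mathring\Phi$) both a divergence and a curl equation, and since the boundary conditions \eqref{ELP1d} on $\Sigma_T^-$ and the tangential component on $\Sigma_T$ pin down the curl--free structure, I can introduce an appropriate lifting of $f^-$ and $g^-$ and write $\dot h$ as a gradient of a scalar potential $\xi$ (pulled back through $\mathring\Phi$). Then $\xi$ solves a uniformly elliptic Neumann-type problem in $\Omega^-$ with inhomogeneous data depending on $\psi$ on $\Sigma$ and on $g^-$ on $\Sigma^-$. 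Substituting this representation reduces \eqref{ELP1} to a hyperbolic problem for $(\dot V,\psi)$ coupled to the elliptic problem for $\xi$, with homogeneous vacuum equations and modified boundary conditions on $\Sigma$ that involve $\xi$, $\psi$, $\mathrm D_{x'}\psi$, and the surface tension term $\mathfrak s\mathrm D_{x'}\cdot(\mathring B\mathrm D_{x'}\psi)$.

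Since this reduced system is not closed at the $L^2$ level, I would next build an $\varepsilon$--regularization (the problem \eqref{Reg} in the paper's roadmap): add a biharmonic smoothing $\varepsilon\Delta_{x'}^2\psi$ in the second boundary equation on $\Sigma$ together with a matching $-\varepsilon\Delta_{x'}^2\xi$ in the reduced vacuum boundary condition, and insert lower-order dual-compatible terms in the interior designed so that both the regularized problem and its formal dual admit a clean $L^2$ energy estimate. Granted these matching $L^2$ estimates, a standard duality/Hahn--Banach argument yields, for each small fixed $\varepsilon>0$, an $L^2$ weak solution of the regularized problem that vanishes in the past.

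The heart of the proof is to establish higher-order energy estimates for the regularization that are uniform in $\varepsilon$, in the anisotropic Sobolev norm $H_*^m$. I would proceed by induction on $\langle\alpha\rangle\le m$: apply $\mathrm D_*^\alpha$, symmetrize with $A_0^+(\mathring U)$, and integrate by parts in $\Omega_T^+\cup\Omega_T^-$. Four boundary integrals $Q_1,\dots,Q_4$ arise on $\Sigma_T$; the troublesome term $Q_{2c}$ produced by $\varepsilon\Delta_{x'}^2\psi$ is precisely what the matching term $-\varepsilon\Delta_{x'}^2\xi$ is designed to cancel, through the good contribution $\mathcal J_2$ coming from the elliptic side. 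The term $Q_4$ must be split into the cases $\alpha_0<m$ and $\alpha_0=m$ because only the $L^2(\Sigma_T)$-norm of $\partial_t^m\psi$ (and not its instant $L^2(\Sigma)$-norm) is controlled by the regularized boundary condition. Normal derivatives of $\dot V$ in the noncharacteristic components are recovered algebraically from the symmetric hyperbolic system itself; normal derivatives of $\nabla\xi$ are recovered from its elliptic equation; and the surface tension furnishes the extra tangential derivative on $\psi$ via $\mathfrak s\mathrm D_{x'}\cdot(\mathring B\mathrm D_{x'}\psi)$. Combining these ingredients with Moser-type product and commutator estimates in $H_*^m$, and with the bounds \eqref{bas1a}--\eqref{H:thm.linear} on the basic state, produces the tame bound \eqref{tame} uniformly in $\varepsilon$.

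Finally, weak compactness in $H_*^m$ (together with standard trace and interpolation estimates) lets me pass to the limit $\varepsilon\to 0^+$ and recover a solution of the reduced problem; reconstructing $\dot h$ from $\xi$ and the lifted sources gives a solution of \eqref{ELP1} satisfying \eqref{tame}. Uniqueness follows from an $L^2$ energy estimate applied to the difference of two solutions (which formally corresponds to the $\varepsilon=0$, trivial-source case). The main obstacle, and the place where the construction is delicate, is the simultaneous balancing of the regularization terms: without the matching pair $\varepsilon\Delta_{x'}^2\psi$ and $-\varepsilon\Delta_{x'}^2\xi$, the biharmonic smoothing on the hyperbolic side has no counterpart on the elliptic side to absorb $Q_{2c}$, and the tangential energy estimate collapses. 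The characteristic nature of $\Sigma$, the elliptic--hyperbolic coupling through $\psi$, and the need to preserve a dual $L^2$ identity all conspire to make this design the critical step.
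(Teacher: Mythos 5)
Your proposal tracks the paper's argument essentially step by step: reducing the vacuum equations to a scalar elliptic problem via a potential $\xi$ after lifting the inhomogeneities, introducing the $\varepsilon$--regularization \eqref{Reg} with the paired terms $\varepsilon\Delta_{x'}^2\psi$ and $-\varepsilon\Delta_{x'}^2\xi$, obtaining $L^2$ existence by duality, proving uniform $H_*^m$ estimates by handling the boundary integrals $Q_1,\dots,Q_4$ (including the $Q_{2c}$/$\mathcal{J}_2$ cancellation and the $\alpha_0=m$ case of $Q_4$), recovering normal derivatives from the symmetric hyperbolic structure and the elliptic equation, and passing to the limit $\varepsilon\to 0$. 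This is the same approach as the paper, and the delicate points you flag are precisely the ones the paper addresses.
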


The rest part of this section is devoted to proving the above theorem.

\subsection{Reformulation}
To reduce the problem \eqref{ELP1}, we compute
that the vacuum equations \eqref{ELP1b} are equivalent to
\begin{align}
 \label{div-curl1}
 \begin{pmatrix}
  \nabla\times \big(\p_1 \mathring{\Phi} \mathring{\eta}^{-\mathsf{T}} \dot{{h}} \big)\\[1mm]
  \nabla\cdot \big(\mathring{\eta}\dot{{h}} \big)
 \end{pmatrix}
 = 
 \begin{pmatrix}
  \mathring{\eta} & 0\\[1mm]
  0 & \p_1\mathring{\Phi}
 \end{pmatrix} f^-
 =:\tilde{f}^-
 \qquad \textrm{in }\Omega^-_T,
\end{align}
where $\mathring{\eta}$ is the invertible matrix defined by
\begin{align}
  \label{eta.ring:def}
\mathring{\eta}:=\begin{pmatrix}
1 & -\p_2\mathring{\Phi} & -\p_3\mathring{\Phi}\\[0.5mm]
0 & \p_1\mathring{\Phi} & 0\\[0.5mm]
0 &0 & \p_1\mathring{\Phi}
\end{pmatrix}.
\end{align}
Then we decompose $\dot{{h}}$ as $\dot{{h}}={{h}}_{\flat}+{{h}}_{\natural}$, where ${{h}}_{\natural}$ is required to solve the following div-curl boundary value problem ({\it cf.}~\eqref{div-curl1}, \eqref{ELP1d}, and \eqref{B'e-:def}):
\begin{align}
\label{div-curl2}
\left\{
\begin{aligned}
&\begin{pmatrix}
  \nabla\times \big(\p_1 \mathring{\Phi} \mathring{\eta}^{-\mathsf{T}} {{h}}_{\natural} \big)\\[1mm]
  \nabla\cdot \big(\mathring{\eta}{{h}}_{\natural} \big)
\end{pmatrix}
=\tilde{f}^-
\quad  \textrm{in }\Omega^-_T,\\
&{{h}}_{\natural}\cdot \mathring{N}=g_1^- \quad \textrm{on }\Sigma_T,
\qquad
  {{h}}_{\natural}\times \mathbf{e}_1=g_2^- \quad \textrm{on }\Sigma^-_T,\\[1mm]
&(x_2,x_3)\to {{h}}_{\natural}(t,x_1,x_2,x_3)\quad \textrm{is $1$-periodic}.
\end{aligned}
\right.
\end{align}
The resolution and $H^2$-estimate for \eqref{div-curl2} have been proved in {\sc Secchi--Trakhinin} \cite[\S 6]{ST14MR3151094}.
To get the $H^m$-estimate for $m\geq 6$,
as in the proof of \cite[Theorem 13]{ST14MR3151094},
we decompose
$\p_1 \mathring{\Phi} \mathring{\eta}^{-\mathsf{T}} {{h}}_{\natural} =\zeta^{\natural}+\nabla \xi_{\natural}$
for the vector $\zeta^{\natural}$ and the scalar $\xi_{\natural}$ satisfying
\begin{align}
\nonumber
& \left\{
\begin{aligned}
&\nabla\times \zeta^{\natural} =(\tilde{f}_1^-,\,\tilde{f}_2^-,\,\tilde{f}_3^-)^{\mathsf{T}},
\quad
\nabla\cdot \zeta^{\natural} =0\qquad  \textrm{in }\Omega^-_T,\\
&\zeta^{\natural}_1=0\quad \textrm{on }\Sigma_T,
\qquad  \zeta^{\natural} \times \mathbf{e}_1=g_2^- \quad \textrm{on }\Sigma^-_T,\\
&(x_2,x_3)\to \zeta^{\natural}(t,x_1,x_2,x_3)\quad \textrm{is $1$-periodic},
\end{aligned}
\right.
\\[1mm] 
 \nonumber
& \left\{
 \begin{aligned}
  &
  \nabla\cdot (\mathring{A}\nabla\xi_{\natural}) =\tilde{f}_4^- -\nabla \cdot (\mathring{A}\zeta^{\natural}) \qquad  \textrm{in }\Omega^-_T,\\
  &(\mathring{A}\nabla\xi_{\natural})_1=g_1^--(\mathring{A}\zeta^{\natural})_1\quad \textrm{on }\Sigma_T,
  \qquad  \xi_{\natural}=0 \quad \textrm{on }\Sigma^-_T,\\
  &(x_2,x_3)\to \xi_{\natural}(t,x_1,x_2,x_3)\quad \textrm{is $1$-periodic},
 \end{aligned}
 \right.
\end{align}
where
$(\mathring{A}\nabla\xi_{\natural})_1$ denotes the first component of  vector $\mathring{A}\nabla\xi_{\natural}$ and
$\mathring{A}$ is the positive definite matrix defined by
\begin{align} \label{A.ring:def}
\mathring{A} := \frac{1}{\p_1\mathring{\Phi}}\mathring{\eta}\mathring{\eta}^{\mathsf{T}}.
\end{align}
Then we can obtain the next lemma by using the elliptic regularization and the Moser-type calculus inequalities.
\begin{lemma} \label{h.nat:lem}
Let $(\tilde{f}^-,g_1^-,g_2^-)$ belong to
$ H^{m-1}(\Omega^-_T)\times H^{m-1/2}(\Sigma_T)\times H^{m-1/2}(\Sigma^-_T)$ for some integer $m\geq 6$.
Assume that the compatibility conditions
\begin{align}
\label{h.nat:com}
g_2^-\cdot  \mathbf{e}_1|_{\Sigma^-}=0 ,\qquad
\int_{\Sigma^-} u \cdot g_2^-
=\int_{\Omega^-} u \cdot (\tilde{f}_1^-,\,\tilde{f}_2^-,\,\tilde{f}_3^-)^{\mathsf{T}}
\end{align}
hold for all vectors $u\in H^1(\Omega^-)$ satisfying $u_2=u_3=0$ on $\Sigma$
and $\nabla\times u=0$ in $\Omega^-$.
Then the problem \eqref{div-curl2} has a unique solution ${{h}}_{\natural}$ in $H^m(\Omega^-_T)$ and
\begin{align}
  \|{{h}}_{\natural}\|_{H^m(\Omega^-_T)}
  \lesssim _{K}
\,&
\left(1+\|\mathring{\varphi}\|_{H^{m+1}(\Sigma_{T})}\right)\|(\tilde{f}^-, g_1^-,g_2^-)\|_{H^{5}(\Omega^-_T)\times H^{5}(\Sigma_T)\times H^{5}(\Sigma^-_T)}
\nonumber\\
&
\label{h.nat:est}
+  \|\tilde{f}^-\|_{H^{m-1}(\Omega^-_T)}
+  \|g_1^-\|_{H^{m-1/2}(\Sigma_T)}
+  \|g_2^-\|_{H^{m-1/2}(\Sigma^-_T)}.
\end{align}
\end{lemma}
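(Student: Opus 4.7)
My plan is to follow the decomposition $\p_1\mathring{\Phi}\,\mathring{\eta}^{-\mathsf{T}}{{h}}_{\natural}=\zeta^{\natural}+\nabla\xi_{\natural}$ introduced just above the lemma, which splits the mixed div-curl system \eqref{div-curl2} into a Hodge-type problem for $\zeta^{\natural}$ with constant-coefficient principal part and a scalar divergence-form elliptic problem for $\xi_{\natural}$. Since the $L^2$ and $H^2$ theory for \eqref{div-curl2} has already been developed in \cite{ST14MR3151094}, the new ingredients to supply here are (i) promoting regularity from $H^2$ up to $H^m$ for every $m\geq 6$, and (ii) producing the tame estimate \eqref{h.nat:est}, in which the high norm $\|\mathring{\varphi}\|_{H^{m+1}(\Sigma_T)}$ of the basic state enters only linearly and is paired with a low-order norm of the data.

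I would construct $\zeta^{\natural}$ first. Its curl-div system has constant-coefficient principal part, so it is a standard vector Hodge problem with prescribed normal trace $\zeta^{\natural}_1=0$ on $\Sigma_T$ and prescribed tangential components $g_2^-$ on $\Sigma_T^-$. The orthogonality relation \eqref{h.nat:com} is exactly the solvability condition against the finite-dimensional space of curl-free fields $u$ with $u_2=u_3=0$ on $\Sigma$; granted this, classical elliptic theory yields $\zeta^{\natural}\in H^m(\Omega_T^-)$ with the estimate $\|\zeta^{\natural}\|_{H^m}\lesssim\|\tilde{f}^-\|_{H^{m-1}}+\|g_2^-\|_{H^{m-1/2}}$. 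Once $\zeta^{\natural}$ is in hand, $\xi_{\natural}$ solves a second-order divergence-form elliptic equation whose coefficient matrix $\mathring{A}$ defined by \eqref{A.ring:def} is uniformly positive definite on $\overline{\Omega_T^-}$ thanks to \eqref{bas1a}--\eqref{bas1c}, with mixed oblique/Dirichlet boundary conditions on $\Sigma_T$ and $\Sigma_T^-$ respectively. Standard elliptic regularity, justified by tangential difference quotients in the $x'$ directions and the equation itself to recover $\p_1$-derivatives, yields $\xi_{\natural}\in H^{m+1}(\Omega_T^-)$, whence $\nabla\xi_{\natural}\in H^m$ and ${{h}}_{\natural}=(\p_1\mathring{\Phi})^{-1}\mathring{\eta}^{\mathsf{T}}(\zeta^{\natural}+\nabla\xi_{\natural})\in H^m(\Omega_T^-)$. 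Uniqueness is read off the homogeneous energy identity for the $\xi_{\natural}$-equation.

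The technical crux is to make the estimate \emph{tame}. Applying $\mathrm{D}^{\alpha}$ with $|\alpha|\leq m$ to the equation for $\xi_{\natural}$, the commutator $[\mathrm{D}^{\alpha},\nabla\cdot(\mathring{A}\nabla\,\cdot\,)]$ involves derivatives of $\mathring{A}$, hence of $\mathring{\varphi}$, of all orders up to $|\alpha|+1$. I would control these with the Moser-type product and commutator inequalities, which split each commutator into two pieces: (a) the top-order derivative of $\mathring{\varphi}$, going up to $H^{m+1}(\Sigma_T)$, times a low-order $L^{\infty}$ factor of $\nabla\xi_{\natural}$ bounded by the $H^5$ data norms; and (b) a low-order $L^{\infty}$ norm of $\mathrm{D}\mathring{A}$, bounded via \eqref{bas1c}, times the top-order norm of $\nabla\xi_{\natural}$. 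Piece (a) feeds the prefactor $(1+\|\mathring{\varphi}\|_{H^{m+1}(\Sigma_T)})$ in front of the low-regularity data in \eqref{h.nat:est}, while piece (b) is absorbed into the left-hand side by the ellipticity of $\mathring{A}$ for $T_0$ small. The main obstacle I anticipate is ensuring that the highest-order derivative of $\mathring{\varphi}$ is never paired against the highest-order derivative of $\xi_{\natural}$ in the same integrand; achieving this requires a careful integration-by-parts pattern in the elliptic energy identity and a corresponding splitting in the boundary traces, and it is precisely this bookkeeping that reproduces the structure of \eqref{h.nat:est}.
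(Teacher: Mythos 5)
Your proposal follows the same route as the paper: the decomposition $\p_1\mathring{\Phi}\,\mathring{\eta}^{-\mathsf{T}}{{h}}_{\natural}=\zeta^{\natural}+\nabla\xi_{\natural}$ that the paper itself introduces just before the lemma, the $H^2$ solvability from Secchi--Trakhinin, and elliptic bootstrapping combined with Moser-type calculus inequalities to promote the regularity and extract the tame structure of \eqref{h.nat:est}. This is exactly what the paper does (it states the lemma follows ``by using the elliptic regularization and the Moser-type calculus inequalities''); the only minor imprecision is attributing the absorption of the lower-order commutator piece to smallness of $T_0$ rather than to it simply being of lower differential order in $\xi_{\natural}$, which is controlled by interpolation.
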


To transform \eqref{ELP1} into a problem with homogeneous boundary conditions, 
we introduce the decomposition $\dot{V}=V_{\flat}+V_{\natural}$ with
$V_{\natural}:=(q_{\natural}, v_1^{\natural},0)^{\mathsf{T}}\in\mathbb{R}^8$ satisfying
\begin{align}
\label{V.natural}
q_{\natural}:=\mathfrak{R}_T\big(g_2^+ +\mathring{{h}}\cdot {{h}}_{\natural}\big),\quad
v_1^{\natural}:=\chi(x_1)\mathfrak{R}_T(-g_1^+)
+\chi(1-x_1)\widetilde{\mathfrak{R}}_T g_3^+,
\end{align}
where $\mathfrak{R}_T:H^m(\Sigma_T)\to H_*^{m+1}(\Omega_T^+)$ and $\widetilde{\mathfrak{R}}_T:H^m(\Sigma_T^+)\to H_*^{m+1}(\Omega_T^+)$ are the continuous extension operators (see \cite{OSY94MR1289186} for more details).
It follows from \eqref{ELP1}, \eqref{div-curl2}, and \eqref{V.natural} that vectors $V_{\flat}$ and ${{h}}_{\flat}$ solve the problem
\begin{subequations} \label{ELP2}
  \begin{alignat}{3}
 \label{ELP2a}
 &\mathbb{L}'_{e+}(\mathring{U}, \mathring{\Phi}) {V}=\tilde{f}^+
 := f^+-\mathbb{L}_{e+}'(\mathring{U},\mathring{\Phi})V_{\natural}
&\qquad &  \textrm{in } \Omega^+_T,\\
 \label{ELP2b}
&\nabla\times \big(\p_1 \mathring{\Phi} \mathring{\eta}^{-\mathsf{T}} {{h}} \big)
=0,\quad
\nabla\cdot \big(\mathring{\eta}{{h}} \big)=0
&& \textrm{in }\Omega^-_T,\\
 \label{ELP2c}
 &  \mathbb{B}'_{e+}(\mathring{U}, \mathring{{h}}, \mathring{\varphi}) (V,{{h}},\psi)=0
&& \textrm{on } \Sigma_T^2\times\Sigma_{T}^+,\\
 \label{ELP2d}
 &  \mathbb{B}'_{e-}(\mathring{{h}}, \mathring{\varphi}) ({{h}},\psi)=0
&& \textrm{on } \Sigma_T\times\Sigma_{T}^-,
\\
 \label{ELP2e}
&
 ({V},h,\psi) =0   &&\textrm{if } t<0,
  \end{alignat}
\end{subequations}
where we drop the subscript ``$\flat$'' for simplicity of notation
and operators $\mathbb{B}'_{e\pm }$ are given in \eqref{B'e+:def}--\eqref{B'e-:def}.

Let us derive a suitable reformulation for problem \eqref{ELP2}.
In view of \eqref{ELP2b}, 
we can introduce the scalar potential $\xi$ by
\begin{align}
\label{xi:def}
  \nabla \xi =\p_1 \mathring{\Phi} \mathring{\eta}^{-\mathsf{T}} {{h}}  \qquad \textrm{in }\Omega^-_T.
\end{align}
Then $\nabla\cdot (\mathring{A} \nabla \xi) =\nabla\cdot (\mathring{\eta} {{h}} )=0$ in $\Omega^-_T$,
where $\mathring{A}$ is the positive definite matrix defined by \eqref{A.ring:def}.
To rewrite \eqref{ELP2c}--\eqref{ELP2d} in terms of $\nabla \xi$,
we get from
\eqref{xi:def}, \eqref{eta.ring:def}, and the second condition in \eqref{bas1d} that
\begin{align} \label{bdy:id1}
  \mathring{{h}}\cdot {{h}}= \mathring{\eta}\mathring{{h}}\cdot \nabla\xi
  =\mathring{{h}}'\cdot\mathrm{D}_{x'}\xi,\quad
 h\cdot \mathring{N}=(\mathring{A}\nabla\xi)_1
  \qquad \textrm{on }\Sigma_T.
\end{align}
Moreover, as in \cite{TW21b,TW21MR4201624}, we set 
\begin{align}
  W:=\,& (q, v_1-\p_2\mathring{\Phi}v_2-\p_3\mathring{\Phi}v_3, v_2, v_3, H, S)^{\mathsf{T}}
\nonumber
 \\[0.5mm]
 =\, &J( \mathring{\Phi})^{-1}V
 \quad  \textrm{with\quad } {J}(\mathring{\Phi}):=
\begin{pmatrix}
\begin{matrix}
1 & 0 & 0 & 0\\
0 & 1 & \partial_2 {\mathring{\Phi}}  & \partial_3 {\mathring{\Phi}}  \\
0 & 0 & 1 & 0 \\
0 & 0 & 0 & 1
\end{matrix}
&O_4\\[2mm]
O_4 & I_4
\end{pmatrix}.
  \label{W:def}
\end{align}
In view of \eqref{ELP2}--\eqref{W:def}, it suffices to study the reduced problem
\begin{subequations}
  \label{ELP3}
  \begin{alignat}{3}
 \label{ELP3a}
 &{{\bf L}}W:=\sum_{i=0}^3{\bm{A}}_i\p_i W +{\bm{A}}_4 W =\bm{f}
 :=J(\mathring{\Phi})^{\mathsf{T}}\tilde{f}^+
 &&\textnormal{in }\Omega^+_T,\\
 \label{ELP3b}
 &\nabla\cdot (\mathring{A}\nabla\xi)=0
 &&\textnormal{in }\Omega^-_T,\\
  \label{ELP3c}
 &W_2=(\p_t+\mathring{v}'\cdot\mathrm{D}_{x'}+\mathring{b}_1)\psi
 &&\textnormal{on }\Sigma_T,\\
 \label{ELP3d}
 &W_1=\mathring{{h}}'\cdot\mathrm{D}_{x'}\xi+\mathfrak{s}\mathrm{D}_{x'}\cdot
 \big(\mathring{B}\mathrm{D}_{x'}\psi \big)
 -\mathring{b}_2 \psi
 &&\textnormal{on }\Sigma_T,\\
  \label{ELP3e}
&(\mathring{A}\nabla\xi)_1=\mathrm{D}_{x'}\cdot(\mathring{{h}}' \psi)&\quad &\textnormal{on }\Sigma_T,\\
 \label{ELP3f}
&W_2=0\quad  \textnormal{on }\Sigma^+_T,\qquad
\xi=0\quad  \textnormal{on }\Sigma^-_T,\qquad
(W,\xi, \psi)|_{t<0} =0,
 && 
  \end{alignat}
\end{subequations}
where $  {\bm{A}}_{i}:=J( \mathring{\Phi})^{\mathsf{T}}{A}_{i}^+(\mathring{U})J( \mathring{\Phi})$ for $i=0,2,3$, $  {\bm{A}}_1:=J( \mathring{\Phi})^{\mathsf{T}}\widetilde{A}_1^+(\mathring{U},\mathring{\Phi})J( \mathring{\Phi}),$
and
$  {\bm{A}}_4:=J( \mathring{\Phi})^{\mathsf{T}}\mathbb{L}_{e+}'(\mathring{U},\mathring{\Phi})J( \mathring{\Phi}).$
We deduce from \eqref{bas1d} and \eqref{bas1f} that
\begin{align}
  \label{decom}
  {\bm{A}}_1\big|_{\Sigma}=
  {\bm{A}}_1\big|_{\Sigma^+}=
  \begin{pmatrix}
 0 & 1 &0 \\
 1 & 0 & 0\\
 0 & 0 & O_6
  \end{pmatrix}
  =:{\bm{A}}_1^{(1)}.
\end{align}
Define ${\bm{A}}_1^{(0)}:={\bm{A}}-{\bm{A}}_1^{(1)}$
so that ${\bm{A}}_1^{(0)}|_{\Sigma}={\bm{A}}_1^{(0)}|_{\Sigma^+}=0.$

\subsection{$L^2$ estimate for the regularization}

To solve problem \eqref{ELP3}, we introduce the following $\varepsilon$--regularization:
\begin{subequations}
  \label{Reg}
  \begin{alignat}{3}
 \label{Reg.a}
 &{{\bf L}}_{\varepsilon}W:=\sum_{i=0}^3{\bm{A}}_i\p_i W-\varepsilon\bm{J}\p_1 W  +{\bm{A}}_4 W =\bm{f}
 &&\textnormal{in }\Omega^+_T,\\
 \label{Reg.b}
 &\nabla\cdot (\mathring{A}\nabla\xi)=0
 &&\textnormal{in }\Omega^-_T,\\
 \label{Reg.c}
 &W_2=(\p_t+\mathring{v}'\cdot\mathrm{D}_{x'}+\mathring{b}_1)\psi
 +\varepsilon \Delta_{x'}^2 \psi
 &&\textnormal{on }\Sigma_T,\\
 \label{Reg.d}
 &W_1=\mathring{h}'\cdot\mathrm{D}_{x'}\xi+\mathfrak{s}\mathrm{D}_{x'}\cdot
  \big(\mathring{B}\mathrm{D}_{x'}\psi \big)
  -\mathring{b}_2 \psi
 &&\textnormal{on }\Sigma_T,\\
 \label{Reg.e}
 &(\mathring{A}\nabla\xi)_1=\mathrm{D}_{x'}\cdot(\mathring{h}' \psi)
 +\varepsilon  \Delta_{x'}\xi
 -\varepsilon  \Delta_{x'}^2 \xi
 &\quad &\textnormal{on }\Sigma_T,\\
 \label{Reg.f}
 &W_2=0\quad  \textnormal{on }\Sigma^+_T,\qquad
 \xi=0\quad  \textnormal{on }\Sigma^-_T,\qquad
 (W,\xi, \psi)|_{t<0}=0,
 &&
  \end{alignat}
\end{subequations}
where $\bm{J}:={\rm diag}\,(0,1,0,0,0,0,0,0)$, the matrices $\mathring{A}$ and $\mathring{B}$ are defined in \eqref{A.ring:def} and \eqref{B.ring:def}, respectively,
$\Delta_{x'}:=\mathrm{D}_{x'}\cdot \mathrm{D}_{x'}$,
and $\Delta_{x'}^2:=\Delta_{x'}\Delta_{x'}$.
As in \cite[\S 2.3]{TW21b} for the problem with vanishing vacuum magnetic field,
we add the term $-\varepsilon\bm{J}\p_1 W $ in \eqref{Reg.a} to derive the $L^2$ estimate for regularization \eqref{Reg}.
The terms $\varepsilon\Delta_{x'}^2\psi$ and $\varepsilon \Delta_{x'}\xi $ containing respectively in \eqref{Reg.c} and \eqref{Reg.e} allow us to obtain the $L^2$ estimate for the dual problem of \eqref{Reg}.
Moreover, the term $-\varepsilon \Delta_{x'}^2\xi$ will become especially useful in establishing the uniform-in-$\varepsilon$ energy estimates for the problem \eqref{Reg}.

Let us show the $L^2$ energy estimate for regularization \eqref{Reg}.
Taking the scalar product of \eqref{Reg.a} with $W$, we utilize \eqref{decom} and \eqref{Reg.f} to obtain
\begin{align} \label{es:0a}
  \int_{\Omega^+} {\bm{A}}_0W\cdot W
  +\varepsilon \|W_2\|_{L^2(\Sigma_t)}^2
    -2\int_{\Sigma_t}W_1 W_2  \lesssim_K \|(\bm{f},W)\|_{L^2(\Omega_t^+)}^2.
\end{align}
It follows from the boundary conditions \eqref{Reg.c}--\eqref{Reg.d} that
\begin{align}
\nonumber
  2 W_1W_2=
\,&
\underbrace{2\mathring{h}'\cdot\mathrm{D}_{x'}\xi \p_t \psi}_{\mathcal{T}_1}+
\underbrace{2\mathring{h}'\cdot\mathrm{D}_{x'}\xi (\mathring{v}'\cdot\mathrm{D}_{x'}+\mathring{b}_1)\psi}_{\mathcal{T}_2}
+\underbrace{2 \varepsilon  W_1 \Delta_{x'}^2 \psi  }_{\mathcal{T}_3}
\\
&+\underbrace{2 \big\{\mathfrak{s} \mathrm{D}_{x'}\cdot
  \big(\mathring{B}\mathrm{D}_{x'}\psi \big)-\mathring{b}_2 \psi \big\}
  (\p_t+\mathring{v}'\cdot\mathrm{D}_{x'}+\mathring{b}_1)\psi
}
_{\mathcal{T}_4}
\qquad \textrm{on }\Sigma_t.
\label{W1W2}
\end{align}

Using $\mathcal{T}_1=
2\mathrm{D}_{x'}\cdot\big(\xi\p_t(\mathring{h}'\psi) \big)
-2\xi\mathrm{D}_{x'}\cdot\p_t(\mathring{h}'\psi)
-2\mathrm{D}_{x'}\xi \cdot \p_t \mathring{h}' \psi$
and \eqref{Reg.e} yields
\begin{align}
\label{T1:es1}
-\int_{\Sigma_t}\mathcal{T}_1
=2\int_{\Sigma_t} \xi \p_t (\mathring{A}\nabla\xi)_1
+\varepsilon \int_{\Sigma} \left(|\mathrm{D}_{x'} \xi |^2+|\Delta_{x'} \xi|^2\right)
+2\int_{\Sigma_t} \p_t \mathring{h}'  \psi \cdot \mathrm{D}_{x'}\xi  .
\end{align}
Passing in the first term on the right-hand side of  \eqref{T1:es1} to the volume integral,
we utilize \eqref{Reg.f} and \eqref{Reg.b} to infer
\begin{align}
\nonumber
 2\int_{\Sigma_t} \xi \p_t (\mathring{A}\nabla\xi)_1
&=2\int_{\Omega_t^-} \p_1\big(\xi \p_t (\mathring{A}\nabla\xi)_1\big)
=2\int_{\Omega_t^-} \nabla\cdot \big(\xi \p_t (\mathring{A}\nabla\xi)\big)\\
& =
2\int_{\Omega_t^-} \nabla \xi\cdot  \p_t(\mathring{A} \nabla\xi)
=\int_{\Omega^-}   \mathring{A} \nabla\xi \cdot \nabla \xi
+\int_{\Omega_t^-} \p_t\mathring{A} \nabla\xi \cdot \nabla \xi,
\label{T1:es2}
\end{align}
where the last identity results from the fact that the matrix $\mathring{A}$ is symmetric ({\it cf.}~\eqref{A.ring:def}).
The second term on the right-hand side of \eqref{T1:es1} is a good term,
while the last term will be estimated below.

Regarding the term $\mathcal{T}_2$ defined in \eqref{W1W2}, we compute
\begin{align}
\mathring{h}'\cdot\mathrm{D}_{x'}\xi ( \mathring{v}'\cdot\mathrm{D}_{x'})\psi
&=\mathring{v}'\cdot\mathrm{D}_{x'}\xi ( \mathring{h}'\cdot\mathrm{D}_{x'})\psi
-(\mathring{h}_2\mathring{v}_3-\mathring{h}_3\mathring{v}_2)
\big(\p_2(\psi \p_3\xi) - \p_3(\psi \p_2\xi) \big)
\nonumber\\
& =\mathring{v}'\cdot\mathrm{D}_{x'}\xi ( \mathring{h}'\cdot\mathrm{D}_{x'})\psi
+\mathrm{D}_{x'}\cdot (\mathring{\rm c}_0 \psi \mathrm{D}_{x'} \xi)
+ \mathring{\rm c}_1 \psi\mathrm{D}_{x'}\xi,
\label{h.v:id1}
\end{align}
where for any $m\in \mathbb{N}$ we denote by $\mathring{\rm c}_m$
a generic and smooth matrix-valued function of
$\{(\mathrm{D}^{\alpha} \mathring{U},\mathrm{D}^{\alpha} \mathring{{h}},\mathrm{D}^{\alpha}\mathring{\Psi}):|\alpha|\leq m\}$.
Using \eqref{h.v:id1} and \eqref{Reg.e} leads to
\begin{align}
\nonumber
\int_{\Sigma_t}\mathcal{T}_2
&=2\int_{\Sigma_t}
\left\{\mathring{v}'\cdot\mathrm{D}_{x'}\xi \big(  \mathrm{D}_{x'}\cdot (\mathring{h}' \psi)
- \mathrm{D}_{x'}\cdot \mathring{h}' \psi \big)
+ \mathring{\rm c}_1\psi \mathrm{D}_{x'}\xi \right\} \\
& =2\int_{\Sigma_t}
\left\{ \mathring{v}'\cdot\mathrm{D}_{x'}\xi
\big( (\mathring{A}\nabla\xi)_1-\varepsilon  \Delta_{x'}\xi
+\varepsilon  \Delta_{x'}^2  \xi \big)
+ \mathring{\rm c}_1 \psi\mathrm{D}_{x'}\xi
\right\}.
\label{T2:es1}
\end{align}
Let us make the estimate for each term in \eqref{T2:es1}.
Similar to \eqref{T1:es2}, 
we discover
\begin{align}
2\int_{\Sigma_t} \mathring{v}'\cdot\mathrm{D}_{x'}\xi (\mathring{A}\nabla\xi)_1
  =\, &\sum_{i=2,3}\int_{\Omega_t^-}
\left(2\nabla\mathring{v}_i\p_i\xi \cdot (\mathring{A} \nabla\xi)
-\p_i(\mathring{v}_i \mathring{A})\nabla\xi\cdot\nabla\xi\right).
\label{T2:es2}
\end{align}
Since
\begin{align*}
2\int_{\Sigma_t}\mathring{v}'\cdot\mathrm{D}_{x'}\xi  \Delta_{x'}^2  \xi
 =\int_{\Sigma_t} \big(2[\Delta_{x'} , \mathring{v}'\cdot\mathrm{D}_{x'}]\xi\Delta_{x'} \xi
-\mathrm{D}_{x'}\cdot \mathring{v}'|\Delta_{x'} \xi|^2\big),
\end{align*}
we have
\begin{align}
&2\varepsilon  \int_{\Sigma_t}
 \mathring{v}'\cdot\mathrm{D}_{x'}\xi
\left(  \Delta_{x'}^2  \xi - \Delta_{x'}\xi \right)
 \lesssim_K \varepsilon   \|(\mathrm{D}_{x'}\xi,\Delta_{x'} \xi)\|_{L^2(\Sigma_t)}^2,
\label{T2:es3}
\\
&\int_{\Sigma_t}  \mathring{\rm c}_1 \psi\mathrm{D}_{x'}\xi
=\int_{\Sigma_t} \big( \mathring{\rm c}_2 \psi + \mathring{\rm c}_1\mathrm{D}_{x'} \psi \big) \xi
\lesssim_K \|(\psi,\mathrm{D}_{x'}\psi)\|_{L^2(\Sigma_t)}^2
+\|\xi\|_{L^2(\Sigma_t)}^2.
\label{T2:es4}
\end{align}
For the last term in \eqref{T2:es4}, we employ integration by parts and Poincar\'{e}'s inequality (see, {\it e.g.}, {\sc Evans} \cite[\S 5.8.1]{E10MR2597943}) to get
\begin{align}
  \|\xi\|_{L^2(\Sigma)}^2\lesssim \|(\xi,\p_1\xi)\|_{L^2(\Omega^-)}^2
  \lesssim \|\nabla\xi\|_{L^2(\Omega^-)}^2,\quad
\|\xi\|_{L^2(\Sigma_t)}^2
\lesssim \|\nabla\xi\|_{L^2(\Omega_t^-)}^2.
  \label{xi:es1}
\end{align}

Utilizing the boundary condition \eqref{Reg.d} yields
\begin{align}
\nonumber
-\int_{\Sigma_t}\mathcal{T}_3
=\;&
2\varepsilon\mathfrak{s} \int_{\Sigma_t}\Delta_{x'}
\big(\mathring{B}\mathrm{D}_{x'}\psi \big)  \cdot \Delta_{x'} \mathrm{D}_{x'}\psi
\\
&+2\varepsilon \int_{\Sigma_t}\Delta_{x'}\mathrm{D}_{x'}\psi \cdot \mathrm{D}_{x'}(\mathring{h}'\cdot \mathrm{D}_{x'}\xi)
+2\varepsilon \int_{\Sigma_t}\Delta_{x'}\psi \Delta_{x'}(\mathring{b}_2\psi).
\nonumber
\end{align}
Recalling definition \eqref{B.ring:def} for matrix $\mathring{B}$,
we obtain 
\begin{align}
\int_{\Sigma_t}\mathcal{T}_3
\leq -\varepsilon \mathfrak{s}  \int_{\Sigma_t}\frac{|\Delta_{x'}\mathrm{D}_{x'} \psi|^2 }{|\mathring{N}|^3}
+\varepsilon C(K)
\sum_{|\alpha|\leq 2} \|(\mathrm{D}_{x'}^{\alpha}\psi,\mathrm{D}_{x'}\xi,
\mathrm{D}_{x'}^2\xi )\|_{L^2(\Sigma_t)}^2,
\label{T3:es1}
\end{align}
where $\mathrm{D}_{x'}^{m}:=(\p_2^m,\p_2^{m-1}\p_3,\ldots,\p_2 \p_3^{m-1}, \p_3^m)^{\mathsf{T}} $
denotes the vector of all partial derivatives in $x'$ of order $m\geq 2$.

A lengthy 
calculation implies ({\it cf.}~\cite[(2.20)]{TW21b})
\begin{align}
\label{T4:es1}
\int_{\Sigma_t}\mathcal{T}_4
\leq   -\mathfrak{s}\int_{\Sigma}  \dfrac{|\mathrm{D}_{x'}\psi|^2}{|\mathring{N}|^3}
- \int_{\Sigma} \mathring{b}_2 \psi^2
+C(K)\|(\psi, \mathrm{D}_{x'}\psi)\|_{L^2(\Sigma_t)}^2.
\end{align}

Plugging \eqref{T1:es1}--\eqref{T1:es2} and \eqref{T2:es1}--\eqref{T4:es1} into \eqref{es:0a}--\eqref{W1W2} and using the identity $\|\mathrm{D}_{x'}^2\xi\|_{L^2(\Sigma)}\leq \|\Delta_{x'}\xi\|_{L^2(\Sigma)}$ imply
\begin{align}
\nonumber
\|W(t)\|_{L^2(\Omega^+)}^2
&+\|\mathrm{D}_{x'}\psi(t)\|_{L^2(\Sigma)}^2
+\|(\xi,\nabla\xi)(t)\|_{L^2(\Omega^-)}^2
\\
&+\varepsilon\|(W_2,\mathrm{D}_{x'}^3\psi)\|_{L^2(\Sigma_t)}^2
+\varepsilon\|(\mathrm{D}_{x'}\xi,\mathrm{D}_{x'}^2\xi)(t)\|_{L^2(\Sigma)}^2
\nonumber
\\
 \lesssim_K
\|(&\bm{f},W)\|_{L^2(\Omega_t^+)}^2
+\|(\psi, \mathrm{D}_{x'}\psi)\|_{L^2(\Sigma_t)}^2
+\|\nabla\xi\|_{L^2(\Omega_t^-)}^2
\nonumber \\
&
+\varepsilon\|(\mathrm{D}_{x'}^{2} \psi,\mathrm{D}_{x'} \xi,\mathrm{D}_{x'}^2\xi)\|_{L^2(\Sigma_t)}^2
+\|\psi(t)\|_{L^2(\Sigma)}^2.
\label{es:0b}
\end{align}
We emphasize that the $L^2$ estimate \eqref{es:0b} is valid also for the case $\varepsilon=0$, that is, for the linear problem \eqref{ELP3}.

To control the last term in \eqref{es:0b},
we multiply \eqref{Reg.c} with $\psi$ to infer
\begin{align}
\|\psi(t)\|_{L^2(\Sigma)}^2
  +2\varepsilon \|\Delta_{x'}\psi\|_{L^2(\Sigma_t)}^2
 \leq \boldsymbol{\epsilon}\varepsilon \|W_2\|_{L^2(\Sigma_t)}^2
  +C(K, \boldsymbol{\epsilon}\varepsilon)\|\psi\|_{L^2(\Sigma_t)}^2
\label{es:0c}
\end{align}
for all $\boldsymbol{\epsilon}>0$.
Combining \eqref{es:0b} with \eqref{es:0c}, taking $\boldsymbol{\epsilon}>0$ small enough, and employing Gr\"{o}nwall's inequality, we have
\begin{multline}
\|W(t)\|_{L^2(\Omega^+)}^2+\|(\xi,\nabla\xi)(t)\|_{L^2(\Omega^-)}^2
  +\|(\psi,\mathrm{D}_{x'}\psi,\mathrm{D}_{x'}\xi,\mathrm{D}_{x'}^2\xi)(t)\|_{L^2(\Sigma)}^2
  \\
 +\|(W_2,W_1,\mathrm{D}_{x'}^2\psi,\mathrm{D}_{x'}^3\psi)\|_{L^2(\Sigma_t)}^2
  \lesssim_{K,\varepsilon}
  \|\bm{f}\|_{L^2(\Omega_t^+)}^2.
  \label{es:0}
\end{multline}
This is the desired $\varepsilon$-dependent $L^2$ energy estimate for regularization \eqref{Reg}.

\subsection{Existence for the regularization}
We prove the existence of solutions to the regularization \eqref{Reg} by applying the duality argument. For this purpose, we introduce the dual problem of \eqref{Reg}, which reads as
\begin{subequations} \label{dual}
  \begin{alignat}{3}
\label{dual.a}
 &{{\bf L}}_{\varepsilon}^*W^*
 :=\bigg(\!-\sum_{i=0}^{3}\bm{A}_i\p_i+\varepsilon\bm{J}\p_1+\bm{A}_4^{\mathsf{T}}-\sum_{i=0}^{3}\p_i\bm{A}_i\bigg)W^*=\bm{f}^*
 &\quad &\textnormal{in }\Omega^+,\\
& \nabla\cdot (\mathring{A}\nabla \xi^*)=0
 &\quad &\textnormal{in }\Omega^-,
 \label{dual.b}
 \\
 &\p_tw^*+\mathrm{D}_{x'}\cdot(\mathring{v}'w^*)
 -\varepsilon\Delta_{x'}^2 w^*
 -\mathring{b}_1 w^*
 &\quad &
 \nonumber\\
 &\qquad
  -\mathring{h}'\cdot \mathrm{D}_{x'}\xi^*
 +\mathring{b}_2 W_2^*
  -\mathfrak{s}\mathrm{D}_{x'}\cdot
   \big(\mathring{B}\mathrm{D}_{x'}W_2^* \big)=0
 &\quad &\textnormal{on }\Sigma,  \label{dual.c}\\
&(\mathring{A}\nabla\xi^*)_1=
\mathrm{D}_{x'}\cdot(\mathring{h}'W_2^*) +\varepsilon\Delta_{x'} \xi^*-\varepsilon\Delta_{x'}^2 \xi^*
 &\quad &\textnormal{on }\Sigma,  \label{dual.d}\\
 \label{dual.e}
 &
 W_2^*=0\quad \textrm{on }\Sigma^+,\qquad
 \xi^*=0\quad \textrm{on }\Sigma^-,\qquad
 (W^*,\xi^*)|_{{t>T} }=0, &&
  \end{alignat}
\end{subequations}
with $w^*:=W_1^*-\varepsilon W_2^*$.
The conditions \eqref{dual.c}--\eqref{dual.e} are imposed to ensure that
\begin{align*}
  &\int_{\Omega_{T}^+}\left({{\bf L}}_{\varepsilon}W\cdot W^* -W\cdot {{\bf L}}_{\varepsilon}^* W^*\right)
+\int_{\Omega_{T}^-}\left( \xi^* \nabla\cdot (\mathring{A}\nabla \xi)
-\xi\nabla\cdot (\mathring{A}\nabla \xi^*)\right)\\
&
  =
\int_{\Sigma_{T}^+}W_1W_2^*
-\int_{\Sigma_{T}}\left( W_2w^*+W_1W_2^*
-\xi^*(\mathring{A}\nabla\xi)_1+\xi(\mathring{A}\nabla\xi^*)_1\right)
-\int_{\Sigma_{T}^-}\xi^*\p_1\xi
 =0,
\end{align*}
where we have used \eqref{Reg.c}--\eqref{Reg.f}.
Passing then to the back time $\tilde{t}:={T}-t$, we find that
$\widetilde{W}^*(\tilde{t},x):={W}^*(t,x)$ and
$\tilde{\xi}^*(\tilde{t},x):={\xi}^*(t,x)$ satisfy
\begin{subequations} \label{dual2}
  \begin{alignat}{3}
  \label{dual2.a}
  & \bigg(\bm{A}_0\p_t-\sum_{i=1}^{3}\bm{A}_i\p_i+\varepsilon\bm{J}\p_1+\bm{A}_4^{\mathsf{T}}-\sum_{i=0}^{3}\p_i\bm{A}_i \bigg)W^*=\bm{f}^*
  &\quad &\textnormal{in }\Omega^+,\\
  & \nabla\cdot (\mathring{A}\nabla \xi^*)=0
  &\quad &\textnormal{in }\Omega^-,
  \label{dual2.b}
  \\
  &\p_tw^*-\mathrm{D}_{x'}\cdot(\mathring{v}'w^*)
  +\varepsilon\Delta_{x'}^2 w^*
  +\mathring{b}_1 w^*
  &\quad &
  \nonumber\\
  &\qquad
     +\mathring{h}'\cdot \mathrm{D}_{x'}\xi^*
  -\mathring{b}_2 W_2^*
  +\mathfrak{s}\mathrm{D}_{x'}\cdot
  \big(\mathring{B}\mathrm{D}_{x'}W_2^* \big)=0
  &\quad &\textnormal{on }\Sigma,  \label{dual2.c}\\
  &(\mathring{A}\nabla\xi^*)_1=
  \mathrm{D}_{x'}\cdot(\mathring{h}'W_2^*) +\varepsilon\Delta_{x'} \xi^*-\varepsilon\Delta_{x'}^2 \xi^*
  &\quad &\textnormal{on }\Sigma,  \label{dual2.d}\\
  \label{dual2.e}
  &
  W_2^*=0\quad \textrm{on }\Sigma^+,\qquad
  \xi^*=0\quad \textrm{on }\Sigma^-,\qquad
  (W^*,\xi^*)|_{{t<0} }=0, &&
  \end{alignat}
\end{subequations}
where for convenience we have dropped the tildes.
Taking the scalar product of \eqref{dual2.a} with ${W}^*$ and recalling $w^*:=W_1^*-\varepsilon W_2^*$, we use \eqref{decom} and \eqref{dual2.e} to get
\begin{align} \label{dual:es1}
  \int_{\Omega^+} {\bm{A}}_0{W}^*\cdot {W}^*
  +\int_{\Sigma_{t}} \big(\varepsilon |W^*_2|^2+2w^*W^*_2 \big)
  \lesssim_{K} \|({\bm{f}}^*,{W}^*)\|_{L^2(\Omega_{t}^+)}^2.
\end{align}
It follows from \eqref{dual2.b} and \eqref{dual2.d}--\eqref{dual2.e} that
\begin{align}
\nonumber
\int_{\Omega_t^-} \mathring{A}\nabla\xi^*\cdot  \nabla\xi^*
&=\int_{\Omega_t^-} \nabla\cdot(\xi^*\mathring{A}\nabla\xi^*)
=\int_{\Sigma_t} \xi^*(\mathring{A}\nabla\xi^*)_1\\
&=-\int_{\Sigma_t} W_2^*\mathring{h}'\cdot \mathrm{D}_{x'}\xi^*
-\varepsilon\int_{\Sigma_t}\left(|\mathrm{D}_{x'}\xi^*|^2+|\Delta_{x'}\xi^*|^2\right),
\nonumber
\end{align}
from which we have
\begin{align}
\|\nabla\xi^*\|^2_{L^2(\Omega_{t}^-)}
+  \|(\mathrm{D}_{x'}\xi^*,\mathrm{D}_{x'}^2\xi^*)\|^2_{L^2(\Sigma_{t})}
\lesssim_{K,\varepsilon}
\|W_2^*\|^2_{L^2(\Sigma_{t})}.
\label{dual:es2}
\end{align}
Multiplying the boundary condition \eqref{dual2.c} by ${w}^*$ leads to
\begin{align}
\nonumber
&\|{w}^*(t)\|_{L^2(\Sigma)}^2
  +2\varepsilon\|\Delta_{x'}w^*\|_{L^2(\Sigma_{t})}^2\\
&\quad   \leq
\boldsymbol{\epsilon}\varepsilon
\|(\mathrm{D}_{x'}w^*,\mathrm{D}_{x'}^{2}w^*)\|_{L^2(\Sigma_{t})}^2
  +C(K,\boldsymbol{\epsilon}\varepsilon)
  \|(w^*,W^*_2,\mathrm{D}_{x'}\xi^*)\|_{L^2(\Sigma_{t})}^2
 \nonumber
\end{align}
for all $\boldsymbol{\epsilon}>0$.
Substitute
$\|(\mathrm{D}_{x'}w^*,\mathrm{D}_{x'}^{2}w^*)\|_{L^2(\Sigma_{t})}\lesssim
\|(w^*,\Delta_{x'}w^*)\|_{L^2(\Sigma_{t})}$
into the last estimate and take $\boldsymbol{\epsilon}>0$ suitably small
to infer
\begin{align}
\|{w}^*(t)\|_{L^2(\Sigma)}^2
+\|(\mathrm{D}_{x'}w^*,\mathrm{D}_{x'}^{2}w^*)\|_{L^2(\Sigma_{t})}^2
\lesssim_{K,\varepsilon} \|(w^*,W^*_2,\mathrm{D}_{x'}\xi^*)\|_{L^2(\Sigma_{t})}^2.
  \label{dual:es3}
\end{align}
Then we combine \eqref{dual:es1}--\eqref{dual:es3},
utilize \eqref{xi:es1} with $\xi$ replaced by $\xi^*$,  and apply Gr\"{o}nwall's inequality
to obtain
\begin{align}
  \nonumber
&\|W^*(t)\|^2_{L^2(\Omega^+)}
+  \|{w}^*(t)\|_{L^2(\Sigma)}^2
+\|(\xi^*,\nabla\xi^*)\|^2_{L^2(\Omega_{t}^-)}
\\
&\qquad
+  \|(W_2^*,\mathrm{D}_{x'} w^*,\mathrm{D}_{x'}^2 w^*,\mathrm{D}_{x'} \xi^*,\mathrm{D}_{x'}^2 \xi^*)\|^2_{L^2(\Sigma_{t})}
\lesssim_{K,\varepsilon}\|{\bm{f}}^*\|_{L^2(\Omega_{t}^+)}^2.
  \label{dual:es}
\end{align}

With the $\varepsilon$-dependent $L^2$ estimates \eqref{es:0} and \eqref{dual:es}, we can deduce the existence of weak solutions $(W,\xi)\in L^2(\Omega_{T}^+)\times L^2(\Omega_{T}^-)$ to regularization \eqref{Reg}
for any small but fixed parameter $\varepsilon\in (0,1)$ by the standard duality argument in \cite{CP82MR0678605}.
Regarding \eqref{Reg.c} as a fourth-order parabolic equation for $\psi$ with given source term $W_2|_{x_1=0}\in L^2(\Sigma_T)$ and zero initial data $\psi |_{t=0}=0$,
as in \cite[Theorem 5.2]{CP82MR0678605}, we can obtain that the Cauchy problem for this parabolic equation has a unique solution $\psi\in C ([0,T],H^4(\mathbb{T}^2))\cap C^1 ([0,T],L^2(\mathbb{T}^2))$.
Therefore, for any small and fixed parameter $\varepsilon>0$, we obtain the existence of solutions $(W,\xi, \psi )\in L^2(\Omega_T^+)\times  L^2(\Omega_T^-)\times L^2((-\infty ,T];H^4(\mathbb{T}^2))$ to the regularized problem \eqref{Reg}. 

\subsection{Uniform energy estimates}
We now show the uniform-in-$\varepsilon$ high-order energy estimates for solutions to the regularization \eqref{Reg}.
Let $m\geq 1$ be an integer and $\alpha=(\alpha_0,\alpha_1,\alpha_2,\alpha_3,\alpha_4)\in\mathbb{N}^5$ satisfy $\langle \alpha \rangle:=\sum_{i=0}^{3}\alpha_i+2\alpha_4\leq m$.
For clear presentation,  we divide this section into five parts.

\subsubsection{Prelude}
Applying $\mathrm{D}_*^{\alpha}:=\p_t^{\alpha_0}(\sigma \p_1)^{\alpha_1}\p_2^{\alpha_2}\p_3^{\alpha_3}\p_1^{\alpha_4}$
to \eqref{Reg.a} with $\sigma:=x_1(1-x_1)$
and taking the scalar product of the resulting equations
with $\mathrm{D}_*^{\alpha} W$, we utilize \eqref{decom} and \eqref{Reg.f} to deduce
\begin{align}\label{es:HO}
\int_{\Omega^+}{\bm{A}}_0\mathrm{D}_*^{\alpha}W\cdot \mathrm{D}_*^{\alpha}W
+\varepsilon\|\mathrm{D}_*^{\alpha}W_2\|_{L^{2}(\Sigma_t)}^2
={\mathcal{Q}_{\alpha}(t)}
+  \mathcal{R}_{\alpha}(t)
\end{align}
for
\begin{align}
&
\label{Q.alpha}
\mathcal{Q}_{\alpha}(t):=
2\int_{\Sigma_t}\mathrm{D}_*^{\alpha}W_1\mathrm{D}_*^{\alpha}W_2
+\int_{\Sigma_t^+}\left(\varepsilon|\mathrm{D}_*^{\alpha}W_2|^2-2 \mathrm{D}_*^{\alpha}W_1\mathrm{D}_*^{\alpha}W_2 \right),\\
\nonumber  
 & \mathcal{R}_{\alpha}(t):=
\int_{\Omega_t^+}\mathrm{D}_*^{\alpha}W\cdot \bigg(
  2\mathrm{D}_*^{\alpha}(\bm{f}- {\bm{A}}_4 W)
  -\sum_{i=0}^{3}\big(2[\mathrm{D}_*^{\alpha},{\bm{A}}_i\p_i]W
  -\p_i {\bm{A}}_i\mathrm{D}_*^{\alpha}W\big) \bigg).
\end{align}
To estimate the integral $\mathcal{R}_{\alpha}$, we obtain from \eqref{Reg.a} and \eqref{decom} that
\begin{align}
\label{d1W}
  \begin{pmatrix}
  \p_1 W_2\\\p_1 W_1-\varepsilon \p_1W_2 \\0
  \end{pmatrix}
  =\bm{f}-{\bm{A}}_4 W-\sum_{i=0,2,3}{\bm{A}}_i\p_i W-{\bm{A}}_1^{(0)}\p_1W,
\end{align}
where matrix ${\bm{A}}_1^{(0)}$ vanishes on the boundaries $\Sigma_T$ and  $\Sigma_{T}^+$.
Then we can follow the proof of \cite[Lemma 3.5]{TW21MR4201624} and use decomposition \eqref{decom} to infer
\begin{align}
  \label{est:R}
  \mathcal{R}_{\alpha}(t)\lesssim_K
\mathcal{M}_1(t):=
 {\|}(\bm{f},W){\|}_{H_*^m(\Omega_{t}^+)}^2
  +\mathring{\rm C}_{m+4}  \|(\bm{f},W)\|_{W_*^{2,\infty}(\Omega_t^+)}^2,
\end{align}
for all $\langle\alpha\rangle \leq m$, where 
$\mathring{\rm C}_{m}:=1+\|(\mathring{U},\mathring{h},\mathring{\varphi}) \|_{m}^2$ ({\it cf.}~\eqref{norm:ring}) and
\begin{align*}
\|u\|_{W_*^{2,\infty}(\Omega_t^+) }:=\sum_{\langle \alpha\rangle\leq 1}\| \mathrm{D}_*^{\alpha} u\|_{W^{1,\infty}(\Omega_t^+) }.
\end{align*}

\subsubsection{Case $\alpha_1>0$}
Since $\mathcal{Q}_{\alpha}(t)=0$ for $\alpha_1>0$, we plug \eqref{est:R} into \eqref{es:HO} to get
\begin{align}
  \sum_{\langle\alpha\rangle\leq m,\, \alpha_1>0}
  \|\mathrm{D}_*^{\alpha}W(t)\|_{L^2(\Omega^+)}^2
  \lesssim_K \mathcal{M}_1(t),
  \label{es:HO1}
\end{align}
where $\mathcal{M}_1(t)$ is given in \eqref{est:R}.

\subsubsection{Case $\alpha_1=0$ and $\alpha_4>0$}
It follows from the identity \eqref{d1W} that
\begin{align}
  \mathcal{Q}_{\alpha}(t)\lesssim
  \sum_{i=0,2,3}\|\mathrm{D}_*^{\alpha-{\mathbf{e}}} (\bm{f},  {\bm{A}}_4 W,{\bm{A}}_i\p_i W,{\bm{A}}_1^{(0)}\p_1W)\|_{L^2(\Sigma_t\cup \Sigma_t^+)}^2
\label{es:HO2a}
\end{align}
for ${\mathbf{e}}:=(0,0,0,0,1).  $
Use the trace theorem ({\it cf.}~\cite{OSY94MR1289186}) and the Moser-type calculus inequalities ({\it cf.}~\cite[Theorem B.3]{MST09MR2604255}) for anisotropic Sobolev spaces to obtain
\begin{align}
&\|\mathrm{D}_*^{\alpha-{\mathbf{e}}} (\bm{f},  {\bm{A}}_4 W )\|_{L^2(\Sigma_t\cup \Sigma_t^+)}^2
\lesssim_{K} \mathcal{M}_1(t),
\label{es:HO2b}
 \end{align} and
\begin{align}
&\|\mathrm{D}_*^{\alpha-{\mathbf{e}}} ( {\bm{A}}_i\p_i W )\|_{L^2(\Sigma_t\cup \Sigma_t^+)}^2
 \lesssim_{K}
\sum_{0<\beta\leq \alpha-\mathbf{e}}
\|(\p_i \mathrm{D}_*^{\alpha-{\mathbf{e}}} W,\,\mathrm{D}_*^{ \beta}{\bm{A}}_i \mathrm{D}_*^{\alpha-{\mathbf{e}}-\beta} \p_i W) \|_{L^2(\Sigma_t\cup \Sigma_t^+)}^2
\nonumber\\
&\qquad \lesssim_{K}
\| \mathrm{D}_*^{\alpha-{\mathbf{e}}} W \|_{H^1(\Sigma_t\cup \Sigma_t^+)}^2
+\sum_{0<\beta\leq \alpha-\mathbf{e}}
\|\mathrm{D}_*^{ \beta}{\bm{A}}_i \mathrm{D}_*^{\alpha-{\mathbf{e}}-\beta} \p_i W \|_{H_*^2(\Omega_{t}^+)}^2
\nonumber\\
&\qquad \lesssim_{K}  \|W\|_{H_*^m(\Omega_{t}^+)}^2
+\mathring{\rm C}_{m+4} \|W\|_{W_*^{2,\infty}(\Omega_{t}^+ )}^2
\lesssim_{K} \mathcal{M}_1(t)
\qquad \textrm{for }i=0,2,3.
\label{es:HO2c}
\end{align}
Since $(\mathrm{D}_*^{\beta}{\bm{A}}_1^{(0)})|_{\Sigma_T\cup \Sigma_T^+}=0$ for $\beta=(\beta_0,\beta_1,\beta_2,\beta_3,\beta_4)\in\mathbb{N}^5$ with $\beta_4=0$,
we derive
 \begin{align}
 \nonumber  &\|\mathrm{D}_*^{\alpha-{\mathbf{e}}} ({\bm{A}}_1^{(0)}\p_1W)\|_{L^2(\Sigma_t\cup \Sigma_t^+)}^2
 \lesssim
 \sum_{{\mathbf{e}}\leq \beta\leq \alpha-{\mathbf{e}} } \|\mathrm{D}_*^{\beta} {\bm{A}}_1^{(0)}\mathrm{D}_*^{\alpha-{\mathbf{e}}-\beta} \p_1W\|_{L^2(\Sigma_t\cup \Sigma_t^+)}^2   \\
 &\qquad \lesssim
 \sum_{{\mathbf{e}}\leq \beta\leq \alpha-{\mathbf{e}} } {\|}\mathrm{D}_*^{\beta-{\mathbf{e}}}(\p_1 {\bm{A}}_1^{(0)})\mathrm{D}_*^{\alpha-\beta} W{\|}_{H_*^2(\Omega_{t}^+)}^2
 \lesssim_{K} \mathcal{M}_1(t).
\label{es:HO2d}
\end{align}
Substituting \eqref{est:R} and \eqref{es:HO2a}--\eqref{es:HO2d} into \eqref{es:HO} implies
\begin{align}
 \sum_{\substack{\langle\alpha\rangle \leq m,\, \alpha_1=0,\,\alpha_4>0}}
  \left(
  \|\mathrm{D}_*^{\alpha}W(t)\|_{L^2(\Omega^+)}^2
  +\varepsilon\|\mathrm{D}_*^{\alpha}W_2\|_{L^2(\Sigma_{t})}^2
  \right)
  \lesssim_K \mathcal{M}_1(t).
  \label{es:HO2}
\end{align}

\subsubsection{Case $\alpha_1=\alpha_4=0$}\label{sec:tangential}
We have $\mathrm{D}_*^{\alpha}=\p_t^{\alpha_0}\p_2^{\alpha_2}\p_2^{\alpha_3}$ and $\alpha_{0}+\alpha_2+\alpha_3\leq m$.
It follows from the boundary conditions \eqref{Reg.f} and \eqref{Reg.d} that
\begin{align}
\mathcal{Q}_{\alpha}(t)=2\int_{\Sigma_{t}} \mathrm{D}_*^{\alpha}W_1\mathrm{D}_*^{\alpha}W_2
=\sum_{i=1}^{4}\int_{\Sigma_{t}}  Q_i
,
\label{Q.alpha:id}
\end{align}
where
\begin{alignat}{3}
&Q_1:= 2 [\mathrm{D}_*^{\alpha}, \mathring{h}'\cdot\mathrm{D}_{x'}]\xi  \mathrm{D}_*^{\alpha} W_2,
\qquad
&&Q_2:= 2 \mathring{h}'\cdot\mathrm{D}_{x'}\mathrm{D}_*^{\alpha}\xi  \mathrm{D}_*^{\alpha} W_2,
\label{Q1Q2}\\
&Q_3:=2\mathfrak{s}  \mathrm{D}_*^{\alpha} \mathrm{D}_{x'}\cdot
\big(\mathring{B}\mathrm{D}_{x'}\psi \big) \mathrm{D}_*^{\alpha} W_2,
\qquad && Q_4:=
-2 \mathrm{D}_*^{\alpha}(\mathring{b}_2 \psi) \mathrm{D}_*^{\alpha} W_2
.
\label{Q3Q4}
\end{alignat}
Let us present the estimates for $Q_i$ in the following four steps.

\vspace{3mm}
\noindent{\bf Step 1:  estimate for $Q_1$.} \
Passing to the volume integral and using \eqref{Reg.f} yield
\begin{align}
  \int_{\Sigma_{t}} Q_1
  =\, &
  -2 \int_{\Omega_{t}^+}\p_1\Big([\mathrm{D}_*^{\alpha}, \mathring{h}'_{\sharp}\cdot\mathrm{D}_{x'}]\xi_{\sharp}  \mathrm{D}_*^{\alpha} W_2\Big)\nonumber \\
  =\,&
  \underbrace{-2 \int_{\Omega_{t}^+}\p_1 [\mathrm{D}_*^{\alpha}, \mathring{h}'_{\sharp}\cdot\mathrm{D}_{x'}]\xi_{\sharp}  \mathrm{D}_*^{\alpha} W_2}_{\mathcal{Q}_{1a}}
  \underbrace{-2\int_{\Omega_{t}^+} [\mathrm{D}_*^{\alpha}, \mathring{h}'_{\sharp}\cdot\mathrm{D}_{x'}]\xi_{\sharp}  \mathrm{D}_*^{\alpha} \p_1W_2}_{\mathcal{Q}_{1b}},
  \label{Q1:es1}
\end{align}
where we denote $\mathring{h}'_{\sharp}(t,x_1,x'):=\mathring{h}'(t,-x_1,x')$ and $\xi_{\sharp}(t,x_1,x'):=\xi(t,-x_1,x')$.
It follows directly from Cauchy's inequality that
\begin{align}
  \mathcal{Q}_{1a}
  \lesssim
  \| \p_1[\mathrm{D}_*^{\alpha}, \mathring{\rm c}_0  ]\mathrm{D}_{x'} \xi \|_{L^2(\Omega_{t}^-)}^2
  +\|W_2 \|_{H_*^m(\Omega_t^+)}^2.
  \label{Q1a:es}
\end{align}
If $\langle\alpha\rangle\leq m-1$,  then the integral $\mathcal{Q}_{1b}$ can be estimated as
\begin{align}
  \mathcal{Q}_{1b}
  \lesssim
 \|   [\mathrm{D}_*^{\alpha}, \mathring{\rm c}_0 ] \mathrm{D}_{x'}\xi \|_{L^2(\Omega_{t}^-)}^2
 +\|\mathrm{D}_*^{\alpha}\p_1 W_2 \|_{L^2(\Omega_t^+)}^2.
\label{Q1b:es1}
\end{align}
If $\langle\alpha\rangle= m$, then we choose $\beta<\alpha$ with $\langle \beta\rangle=m-1$ and employ integration by parts to derive
\begin{align}
  \mathcal{Q}_{1b}
  \lesssim\,&
\int_{\Omega^+}  \left|[\mathrm{D}_*^{\alpha}, \mathring{h}'_{\sharp}\cdot\mathrm{D}_{x'}]\xi_{\sharp}  \mathrm{D}_*^{\beta}\p_1 W_2 \right|
+ \|   [\mathrm{D}_*^{\alpha}, \mathring{h}' \cdot\mathrm{D}_{x'}] \xi\|_{H^1(\Omega_{t}^-)}^2
+ \|\mathrm{D}_*^{\beta}\p_1 W_2 \|_{L^2(\Omega_t^+)}^2
\nonumber
\\
  \lesssim\,&
  \boldsymbol{\epsilon}  \|\mathrm{D}_*^{\beta}\p_1 W_2 \|_{L^2(\Omega^+)}^2
  +C(\boldsymbol{\epsilon})\|  [\mathrm{D}_*^{\alpha}, \mathring{\rm c}_0]\mathrm{D}_{x'}\xi\|_{H^1(\Omega_{t}^-)}^2
+ \|\mathrm{D}_*^{\beta}\p_1 W_2 \|_{L^2(\Omega_t^+)}^2
\label{Q1b:es2}
\end{align}
for all $\boldsymbol{\epsilon}>0$.
To estimate the last terms in \eqref{Q1b:es1}--\eqref{Q1b:es2},
we compute from identities \eqref{d1W} and \eqref{decom} that
for all $\gamma=(\gamma_0,0,\gamma_2,\gamma_3,0)$ with $\langle \gamma\rangle\leq m-1$,
\begin{align}
  \nonumber
  \|\mathrm{D}_*^{\gamma}\p_1 W_2(t)\|_{L^2(\Omega^+)}^2
  \lesssim_K \;&
  \sum_{\langle \beta\rangle\leq m} \| \mathrm{D}_*^{\beta} W(t)\|_{L^2(\Omega^+)}^2
  +  \|(\bm{f},{\bm{A}}_4 W)\|_{H_*^m(\Omega_t^+)}^2 \\
  &+\sum_{i=0,2,3}\big\| \big([\mathrm{D}_*^{\gamma},{\bm{A}}_i\p_i] W ,[\mathrm{D}_*^{\gamma},{\bm{A}}_1^{(0)}\p_1] W \big)(t)\big\|_{L^2(\Omega^+)}^2
  \nonumber\\
  \lesssim_K \;&
  \sum_{\langle \beta\rangle\leq m} \| \mathrm{D}_*^{\beta} W(t)\|_{L^2(\Omega^+)}^2
  +\mathcal{M}_1(t),
  \label{d1W:es1}\\[0.5mm]
\|\mathrm{D}_*^{\gamma}\p_1 W_2 \|_{L^2(\Omega_t^+)}^2
\lesssim_K \;&
\mathcal{M}_1(t),
\quad
\label{d1W:es2}
\end{align}
where $\mathcal{M}_1(t)$ is defined in \eqref{est:R}.
Plugging \eqref{Q1a:es}--\eqref{Q1b:es2} into \eqref{Q1:es1},
we use \eqref{d1W:es1}--\eqref{d1W:es2} and the Moser-type calculus inequalities to discover
\begin{align}
\int_{\Sigma_t} Q_1
  \lesssim_K
  \boldsymbol{\epsilon}\sum_{\langle \beta\rangle\leq m} \| \mathrm{D}_*^{\beta} W(t)\|_{L^2(\Omega^+)}^2
  +C(\boldsymbol{\epsilon}) \mathcal{M}_2(t)
  +C(\boldsymbol{\epsilon}) \mathcal{M}_1(t)
  \label{Q1:es}
\end{align}
for all $\boldsymbol{\epsilon}>0$,
where 
\begin{align}
  \mathcal{M}_2(t):=\|\nabla \xi \|_{H^m(\Omega_t^-)}^2
  +\mathring{\mathrm{C}}_{m+4} \|\nabla \xi\|_{L^{\infty}(\Omega_t^-)}^2.
  \label{M2.cal}
\end{align}

\vspace{1mm}
\noindent{\bf Step 2:  estimate for $Q_2$.} \
For $Q_2$ defined in \eqref{Q1Q2},
from \eqref{Reg.c}, we have
\begin{align}
Q_2
  =\, &\underbrace{2
    \mathrm{D}_{x'}\mathrm{D}_*^{\alpha} \xi\cdot \mathrm{D}_*^{\alpha}\p_t (\mathring{h}'\psi )
  }_{Q_{2a}}
  +\underbrace{2
    \mathring{h}'\cdot \mathrm{D}_{x'} \mathrm{D}_*^{\alpha} \xi
    (\mathring{v}'\cdot \mathrm{D}_{x'} ) \mathrm{D}_*^{\alpha}  \psi
  }_{Q_{2b}}
+\underbrace{2\varepsilon
  \mathring{h}'\cdot \mathrm{D}_{x'} \mathrm{D}_*^{\alpha} \xi\;\!
  \mathrm{D}_*^{\alpha}\Delta_{x'}^2\psi
}_{Q_{2c}}
  \nonumber \\[0.5mm]
  & \underbrace{-2
\mathrm{D}_{x'}\mathrm{D}_*^{\alpha} \xi\cdot [\mathrm{D}_*^{\alpha}\p_t ,\mathring{h}']\psi
+
2
\mathring{h}'\cdot \mathrm{D}_{x'} \mathrm{D}_*^{\alpha} \xi
 \left\{[\mathrm{D}_*^{\alpha},\mathring{v}'\cdot \mathrm{D}_{x'}]  \psi
 +\mathrm{D}_*^{\alpha}(\mathring{b}_1\psi) \right\}
  }_{Q_{2d}} .
\label{Q2:es1}
\end{align}
In view of the boundary condition \eqref{Reg.e}, we find
\begin{align}
\int_{\Sigma_{t}}  {Q}_{2a}
=  \underbrace{-2\int_{\Sigma_{t}}\mathrm{D}_*^{\alpha}\xi
 \mathrm{D}_*^{\alpha}\p_t(\mathring{A}\nabla\xi)_1}_{\mathcal{J}_1}
+
\underbrace{2\varepsilon \int_{\Sigma_{t}}\mathrm{D}_*^{\alpha}\xi
 \mathrm{D}_*^{\alpha}\p_t(\Delta_{x'}\xi -\Delta_{x'}^2\xi) }_{\mathcal{J}_2}.
\label{Q2a:es1}
\end{align}
Pass $\mathcal{J}_1$ to the volume integral and use the elliptic equation \eqref{Reg.b} to derive
\begin{align}
\nonumber
  \mathcal{J}_{1}
  =\, &-2   \int_{\Omega_{t}^-} \nabla\cdot\left( \mathrm{D}_*^{\alpha} \p_t  (\mathring{A}\nabla\xi)   \mathrm{D}_*^{\alpha}  \xi\right)
  =-2   \int_{\Omega_{t}^-} \p_t  \mathrm{D}_*^{\alpha} (\mathring{A}\nabla\xi)   \cdot  \mathrm{D}_*^{\alpha} \nabla \xi \\
=\, &
-\int_{\Omega^-} \mathring{A}\mathrm{D}_*^{\alpha}\nabla\xi\cdot\mathrm{D}_*^{\alpha}\nabla\xi
+\int_{\Omega_{t}^-}\Big(\p_t\mathring{A}\mathrm{D}_*^{\alpha}\nabla\xi
-2 [\p_t\mathrm{D}_*^{\alpha}, \mathring{A}] \nabla\xi  \Big)\cdot \mathrm{D}_*^{\alpha}\nabla\xi.
\nonumber 
\end{align}
Regarding term $\mathcal{J}_{2}$, we see that it is a good term, since
\begin{align}
  \mathcal{J}_{2}
=-  \varepsilon \int_{\Sigma}
(|\mathrm{D}_*^{\alpha}\mathrm{D}_{x'}\xi|^2+
|\mathrm{D}_*^{\alpha}\Delta_{x'}\xi|^2)
  \leq  -\varepsilon
  \|(\mathrm{D}_*^{\alpha}\mathrm{D}_{x'}\xi,\mathrm{D}_*^{\alpha}\mathrm{D}_{x'}^2\xi)(t)
  \|_{L^2(\Sigma)}^2.
 \label{J2:es}
\end{align}
Substituting the above estimates for $\mathcal{J}_{1}$ and $\mathcal{J}_{2}$ into \eqref{Q2a:es1} yields
\begin{align}
\int_{\Sigma_{t}}  {Q}_{2a}
+
\int_{\Omega^-} \mathring{A}\mathrm{D}_*^{\alpha}\nabla\xi\cdot\mathrm{D}_*^{\alpha}\nabla\xi
+
\varepsilon
\|(\mathrm{D}_*^{\alpha}\mathrm{D}_{x'}\xi,\mathrm{D}_*^{\alpha}\mathrm{D}_{x'}^2\xi)(t)
\|_{L^2(\Sigma)}^2
\lesssim_K \mathcal{M}_2(t),
\label{Q2a:es2}
\end{align}
where $\mathcal{M}_2(t)$ is defined by \eqref{M2.cal}.
For term $Q_{2b}$ given in \eqref{Q2:es1},
we use identity \eqref{h.v:id1} with $\xi$ and $\psi$ replaced respectively by $\mathrm{D}_*^{\alpha}\xi$ and $\mathrm{D}_*^{\alpha}\psi$ to get
\begin{align}
\int_{\Sigma_{t}} Q_{2b}
=
\underbrace{2 \int_{\Sigma_{t}}\mathring{v}'\cdot\mathrm{D}_{x'}\mathrm{D}_*^{\alpha}\xi \;\!
\mathrm{D}_*^{\alpha}\mathrm{D}_{x'} \cdot ( \mathring{h}'\psi)}_{\mathcal{J}_3}
+\mathcal{J}_4+\mathcal{J}_5,
\label{Q2b:es1}
\end{align}
where
\begin{align} \nonumber
\mathcal{J}_4
:=-2\sum_{i=2,3} \int_{\Sigma_{t}}\mathring{v}'\cdot\mathrm{D}_{x'}\mathrm{D}_*^{\alpha}\xi \;\!
[\mathrm{D}_*^{\alpha}\p_i, \mathring{h}_i] \psi,\quad
\mathcal{J}_5:=
\int_{\Sigma_{t}}
\mathring{\rm c}_1\mathrm{D}_*^{\alpha}\psi
\mathrm{D}_*^{\alpha}\mathrm{D}_{x'}\xi.
\end{align}
In light of \eqref{Reg.e}, we obtain
\begin{align}
\mathcal{J}_3
=
\underbrace{2 \int_{\Sigma_{t}}\mathring{v}'\cdot\mathrm{D}_{x'}\mathrm{D}_*^{\alpha}\xi
 \mathrm{D}_*^{\alpha}(\mathring{A}\nabla\xi)_1}_{\mathcal{J}_{3a}}
\underbrace{-2\varepsilon \int_{\Sigma_{t}}
 \mathring{v}'\cdot\mathrm{D}_{x'}\mathrm{D}_*^{\alpha}\xi
 \mathrm{D}_*^{\alpha}(\Delta_{x'}-\Delta_{x'}^2)\xi }_{\mathcal{J}_{3b}}.
\label{J3:es1}
\end{align}
It follows from equation \eqref{Reg.b} and integration by parts that
\begin{align}
\mathcal{J}_{3a}
=\, &2\int_{\Omega_{t}^{-}}  \nabla \cdot \left(\mathring{v}_{\sharp}'\cdot\mathrm{D}_{x'}\mathrm{D}_*^{\alpha}\xi  \mathrm{D}_*^{\alpha}(\mathring{A}\nabla\xi) \right)
=2\int_{\Omega_{t}^{-}}  \nabla  \left(\mathring{v}_{\sharp}'\cdot\mathrm{D}_{x'}\mathrm{D}_*^{\alpha}\xi \right)
\cdot \mathrm{D}_*^{\alpha}(\mathring{A}\nabla\xi)
\nonumber\\
=\, &
\int_{\Omega_{t}^{-}}
\mathring{\rm c}_{2} \nabla \mathrm{D}_*^{\alpha}\xi
\cdot \left\{\mathrm{D}_*^{\alpha}(\mathring{\rm c}_1\nabla\xi )
+[\mathrm{D}_{x'}\mathrm{D}_*^{\alpha}, \mathring{\rm c}_1]\nabla\xi  \right\}
\lesssim_K \mathcal{M}_2(t),
\label{J3a:es}
\end{align}
where we denote $\mathring{v}'_{\sharp}(t,x_1,x'):=\mathring{v}'(t,-x_1,x')$
and $\mathcal{M}_2(t)$ is defined by \eqref{M2.cal}.
And
\begin{align}
\mathcal{J}_{3b}
&  =
2\varepsilon  \int_{\Sigma_{t}}
\left\{
\mathrm{D}_{x'} (\mathring{v}'\cdot\mathrm{D}_{x'}) \mathrm{D}_*^{\alpha}\xi   \cdot
\mathrm{D}_{x'}  \mathrm{D}_*^{\alpha} \xi
+
\Delta_{x'} (\mathring{v}'\cdot\mathrm{D}_{x'}) \mathrm{D}_*^{\alpha}\xi
\Delta_{x'}  \mathrm{D}_*^{\alpha} \xi \right\}
\nonumber\\
 & 
\lesssim_K
\varepsilon
\|(\mathrm{D}_{x'}   \xi, \mathrm{D}_{x'}^2   \xi) \|_{H^m(\Sigma_{t})}^2.
\label{J3b:es}
\end{align}
For the terms $\mathcal{J}_4,\mathcal{J}_5$ and the integral of $Q_{2d}$ defined in \eqref{Q2:es1},
we use the Moser-type calculus inequalities
and \eqref{xi:es1} with $\xi$ replaced by $\mathrm{D}_*^{\alpha}\xi$
to infer
\begin{align}
\mathcal{J}_4+\mathcal{J}_5
+ \int_{\Sigma_{t}} Q_{2d}
&=
\sum_{i=0,2,3}
\int_{\Sigma_{t}}
\left\{  \mathring{\rm c}_1[\mathrm{D}_*^{\alpha}\p_i, \mathring{\rm c}_0]\psi
+\mathring{\rm c}_1\mathrm{D}_*^{\alpha}(\mathring{\rm c}_1\psi)
\right\} \mathrm{D}_{x'}\mathrm{D}_*^{\alpha}\xi
\nonumber
\\
&=
\sum_{i=0,2,3}\int_{\Sigma_{t}}
\mathrm{D}_{x'}\cdot\left\{  \mathring{\rm c}_1[\mathrm{D}_*^{\alpha}\p_i, \mathring{\rm c}_0]\psi
+\mathring{\rm c}_1\mathrm{D}_*^{\alpha}(\mathring{\rm c}_1\psi)
\right\}\mathrm{D}_*^{\alpha}\xi
\nonumber \\[1.5mm]
&\lesssim_K    \mathcal{M}_3(t)+\|\mathrm{D}_*^{\alpha}\xi\|_{L^2(\Sigma_t)}^2
\lesssim_K \mathcal{M}_3(t)+\mathcal{M}_2(t),
\label{Q2d:es}
\end{align}
where 
\begin{align}
 \mathcal{M}_3(t):=\|( \psi,\mathrm{D}_{x'}\psi)\|_{H^m(\Sigma_t)}^2 +\mathring{\mathrm{C}}_{m+4} \|( \psi,\mathrm{D}_{x'}\psi)\|_{L^{\infty}(\Sigma_t)}^2.
 \label{M3.cal}
\end{align}
The integral of $Q_{2c}$ ({\it cf.}~\eqref{Q2:es1}) can be estimated as
\begin{align}
\int_{\Sigma_{t}} Q_{2c}
=\, &
-2\varepsilon  \int_{\Sigma_{t}}
\mathrm{D}_{x'}(\mathring{h}'\cdot \mathrm{D}_{x'} )\mathrm{D}_*^{\alpha} \xi\cdot
\mathrm{D}_*^{\alpha}\Delta_{x'}\mathrm{D}_{x'} \psi
\nonumber\\
\leq\, & \boldsymbol{\epsilon} \varepsilon \| \mathrm{D}_{x'}^3 \psi\|_{H^m(\Sigma_{t})}^2
+C(K, \boldsymbol{\epsilon} ) \varepsilon
\|(\mathrm{D}_{x'}    \xi, \mathrm{D}_{x'}^2    \xi) \|_{H^m(\Sigma_{t})}^2
\ \ \textrm{for all }\boldsymbol{\epsilon}>0.
\label{Q2c:es}
\end{align}
In view of decomposition \eqref{Q2:es1}, we combine \eqref{Q2a:es2}--\eqref{Q2d:es} with \eqref{Q2c:es} to get
\begin{align}
&\int_{\Sigma_{t}} Q_2
+
\int_{\Omega^-} \mathring{A}\mathrm{D}_*^{\alpha}\nabla\xi\cdot\mathrm{D}_*^{\alpha}\nabla\xi
+
\varepsilon
\|(\mathrm{D}_*^{\alpha}\mathrm{D}_{x'}\xi,\mathrm{D}_*^{\alpha}\mathrm{D}_{x'}^2\xi)(t)
\|_{L^2(\Sigma)}^2 \nonumber \\
&\quad \lesssim_K \mathcal{M}_2(t)+\mathcal{M}_3(t)
+\boldsymbol{\epsilon} \varepsilon
\| \mathrm{D}_{x'}^3 \psi\|_{H^m(\Sigma_{t})}^2
+C(K, \boldsymbol{\epsilon} ) \varepsilon
\|(\mathrm{D}_{x'}    \xi, \mathrm{D}_{x'}^2   \xi) \|_{H^m(\Sigma_{t})}^2
\label{Q2:es}
\end{align}
for all $\boldsymbol{\epsilon}>0$, where $\mathcal{M}_2(t)$ and $\mathcal{M}_3(t)$ are defined in \eqref{M2.cal} and \eqref{M3.cal}, respectively.

\vspace{3mm}
\noindent{\bf Step 3:  estimate for $Q_3$.} \
Next we consider the integral of $Q_3$ defined in \eqref{Q3Q4}.
Thanks to the boundary condition \eqref{Reg.c}, we infer
\begin{align}
\int_{\Sigma_{t}} Q_3
=
\underbrace{-2\mathfrak{s}\int_{\Sigma_{t}}  \mathrm{D}_*^{\alpha}
 \big(\mathring{B}\mathrm{D}_{x'}\psi \big) \cdot
 (\p_t+\mathring{v}'\cdot\mathrm{D}_{x'} )\mathrm{D}_*^{\alpha}\mathrm{D}_{x'}\psi}_{\mathcal{Q}_{3a}}+\mathcal{Q}_{3b}
+\mathcal{Q}_{3c},
\label{Q3:es1}
\end{align}
where 
\begin{align}
\nonumber
&\mathcal{Q}_{3b}:=
-2\mathfrak{s}\int_{\Sigma_{t}}  \mathrm{D}_*^{\alpha}
\big(\mathring{B}\mathrm{D}_{x'}\psi \big) \cdot
\Big\{ [\mathrm{D}_*^{\alpha}\mathrm{D}_{x'}, \mathring{v}'\cdot\mathrm{D}_{x'} ]\psi
+\mathrm{D}_*^{\alpha}\mathrm{D}_{x'}(\mathring{b}_1\psi)
 \Big\},\\
 &\mathcal{Q}_{3c}:=
 - 2\mathfrak{s}\varepsilon \int_{\Sigma_{t}} \Delta_{x'} \mathrm{D}_*^{\alpha}
 \big(\mathring{B}\mathrm{D}_{x'}\psi \big) \cdot
 \Delta_{x'}\mathrm{D}_*^{\alpha}\mathrm{D}_{x'}\psi  .
\nonumber
\end{align}
We have derived in \cite[\S 2.4]{TW21b} the estimate for $\mathcal{Q}_{3a}$ and $\mathcal{Q}_{3b}$ (denoted respectively as $\mathcal{Q}_{\alpha}^{(2)}(t)$ and $\mathcal{Q}_{\alpha}^{(4)}(t)$ therein),
which reads as
\begin{align}
\mathcal{Q}_{3a}
+\mathcal{Q}_{3b}
\leq  -\frac{\mathfrak{s}}{2}\int_{\Sigma} \frac{|\mathrm{D}_*^{\alpha}\mathrm{D}_{x'}\psi|^2}{|\mathring{N}|^3}
+C(K) \mathcal{M}_3(t).
\label{Q3a:es}
\end{align}
Noting that $\mathring{B}$ defined in \eqref{B.ring:def} is positive definite, we apply the Cauchy and Moser-type calculus inequalities to have
\begin{align}
\mathcal{Q}_{3c}
\leq\, &  -\mathfrak{s}\varepsilon \int_{\Sigma_{t}}
\mathring{B}  \mathrm{D}_*^{\alpha} \Delta_{x'}\mathrm{D}_{x'}\psi   \cdot
 \mathrm{D}_*^{\alpha} \Delta_{x'}\mathrm{D}_{x'}\psi
 +\varepsilon C(K)
 \|[\mathrm{D}_*^{\alpha} \Delta_{x'},\mathring{B}]\mathrm{D}_{x'}\psi \|_{L^2(\Sigma_{t})}^2\nonumber\\
\leq  \, &  -\mathfrak{s}\varepsilon \int_{\Sigma_{t}}
\mathring{B}  \mathrm{D}_*^{\alpha} \Delta_{x'}\mathrm{D}_{x'}\psi   \cdot
\mathrm{D}_*^{\alpha} \Delta_{x'}\mathrm{D}_{x'}\psi
+\varepsilon C(K)  \mathcal{M}_4(t),
\label{Q3c:es}
\end{align}
where
$$
 \mathcal{M}_4(t):=\|(\mathrm{D}_{x'}\psi, \mathrm{D}_{x'}^2\psi)\|_{H^m(\Sigma_t)}^2 +\mathring{\mathrm{C}}_{m+4} \|(\mathrm{D}_{x'}\psi,\mathrm{D}_{x'}^2\psi)\|_{L^{\infty}(\Sigma_t)}^2.
$$
Utilizing \eqref{Q3:es1}--\eqref{Q3c:es} and
$\|\mathrm{D}_{x'}^2\psi\|_{H^m(\Sigma_t)}^2\lesssim
 \|\mathrm{D}_{x'}\psi\|_{H^m(\Sigma_t)} \|\mathrm{D}_{x'}^3\psi\|_{H^m(\Sigma_t)} $
leads to
\begin{multline}
\int_{\Sigma_{t}} Q_3
+\frac{\mathfrak{s}}{2}\int_{\Sigma} \frac{|\mathrm{D}_*^{\alpha}\mathrm{D}_{x'}\psi|^2}{|\mathring{N}|^3}
+\mathfrak{s}\varepsilon \int_{\Sigma_{t}}
\mathring{B}  \mathrm{D}_*^{\alpha} \Delta_{x'}\mathrm{D}_{x'}\psi   \cdot
\mathrm{D}_*^{\alpha} \Delta_{x'}\mathrm{D}_{x'}\psi
 \\[1mm]
\lesssim_K
\boldsymbol{\epsilon} \varepsilon \|\mathrm{D}_{x'}^3\psi\|_{H^m(\Sigma_t)}^2
+C(\boldsymbol{\epsilon})
\mathcal{M}_3(t)
+\varepsilon\mathring{\mathrm{C}}_{m+4} \|\mathrm{D}_{x'}^2\psi\|_{L^{\infty}(\Sigma_t)}^2
\label{Q3:es}
\end{multline}
for all $\boldsymbol{\epsilon}>0$, where $\mathcal{M}_3(t)$ is defined in \eqref{M3.cal}.

Plugging \eqref{Q1:es}, \eqref{Q2:es}, and \eqref{Q3:es} into \eqref{Q.alpha:id},
we use \eqref{es:HO} and \eqref{est:R} to get
\begin{align}
\nonumber
&\|\mathrm{D}_*^{\alpha}W(t)\|_{L^2(\Omega^+)}^2
+\|\mathrm{D}_*^{\alpha}\nabla\xi(t)\|^2_{L^2(\Omega^-)}
+\|\mathrm{D}_*^{\alpha}\mathrm{D}_{x'}\psi(t)\|^2_{L^2(\Sigma)}
\\[2mm]
\nonumber
&\qquad
+\varepsilon\|(\mathrm{D}_*^{\alpha}W_2, \mathrm{D}_*^{\alpha} \mathrm{D}_{x'}^3\psi)\|_{L^2(\Sigma_{t})}^2
+\varepsilon
\|(\mathrm{D}_*^{\alpha}\mathrm{D}_{x'}\xi,\mathrm{D}_*^{\alpha}\mathrm{D}_{x'}^2\xi)(t)
\|_{L^2(\Sigma)}^2
 \\[2mm]
&   \lesssim_K
C(\boldsymbol{\epsilon}) \mathcal{M}(t)
+\varepsilon\mathring{\mathrm{C}}_{m+4} \|\mathrm{D}_{x'}^2\psi\|_{L^{\infty}(\Sigma_t)}^2
+C(K, \boldsymbol{\epsilon} ) \varepsilon \|(\mathrm{D}_{x'}    \xi, \mathrm{D}_{x'}^2   \xi) \|_{H^m(\Sigma_{t})}^2
\nonumber\\[1mm]
&  \qquad
+\boldsymbol{\epsilon}\varepsilon \| \mathrm{D}_{x'}^3 \psi\|_{H^m(\Sigma_{t})}^2
+\boldsymbol{\epsilon}\sum_{\langle \beta\rangle\leq m} \| \mathrm{D}_*^{\beta} W(t)\|_{L^2(\Omega^+)}^2
+\left|\int_{\Sigma_{t}} Q_4\right|
\label{es:HO3a}
\end{align}
for $\alpha_1=\alpha_4=0$, where
\begin{align}
\mathcal{M}(t):=\mathcal{M}_1(t)+\mathcal{M}_2(t)+\mathcal{M}_3(t).
\label{M.cal}
\end{align}

\vspace{1mm}
\noindent{\bf Step 4:  estimate for $Q_4$.} \
Let us now consider the final term $Q_4$ given in \eqref{Q3Q4}.
Utilize \eqref{Reg.c} to get
\begin{align}
 \int_{\Sigma_{t}} Q_4
 =\underbrace{-2 \int_{\Sigma_{t}}  \mathrm{D}_*^{\alpha}
  \big(\mathring{b}_2 \psi \big) \cdot
  (\p_t+\mathring{v}'\cdot\mathrm{D}_{x'} )\mathrm{D}_*^{\alpha} \psi}_{\mathcal{Q}_{4a}}+\mathcal{Q}_{4b}
+\mathcal{Q}_{4c},
 \label{Q4:es1}
\end{align}
where
\begin{align}
 \nonumber
 &\mathcal{Q}_{4b}:=
 -2 \int_{\Sigma_{t}}  \mathrm{D}_*^{\alpha}
 \big(\mathring{b}_2 \psi \big) \cdot
 \Big\{ [\mathrm{D}_*^{\alpha}, \mathring{v}'\cdot\mathrm{D}_{x'} ]\psi
 +\mathrm{D}_*^{\alpha}(\mathring{b}_1\psi)
 \Big\},\\
 &\mathcal{Q}_{4c}:=
2 \varepsilon \int_{\Sigma_{t}} \mathrm{D}_{x'} \mathrm{D}_*^{\alpha}
\big(\mathring{b}_2 \psi \big) \cdot
\mathrm{D}_*^{\alpha}\mathrm{D}_{x'}\Delta_{x'}\psi.
 \nonumber
\end{align}
The estimate for $\mathcal{Q}_{4a}$ and $\mathcal{Q}_{4b}$ can be obtained similar to that for the integrals $\mathcal{Q}_{\alpha}^{(1)}(t)$ and $\mathcal{Q}_{\alpha}^{(3)}(t)$ in \cite[\S 2.4]{TW21b}. Precisely, we can have
\begin{align}
 \left| \mathcal{Q}_{4a}
 +\mathcal{Q}_{4b} \right|
 \lesssim  \|\mathrm{D}_*^{\alpha} \psi (t)\|_{L^2(\Sigma)}^2+   \mathcal{M}_3(t).
 \label{Q4a:es1}
\end{align}
If $\alpha_0< m$, then we infer 
\begin{align}
 \|\mathrm{D}_*^{\alpha}\psi(t)\|_{L^2(\Sigma)}^2
 \lesssim \int_{\Sigma_t} |\mathrm{D}_*^{\alpha}\psi||\p_t\mathrm{D}_*^{\alpha}\psi|
 \lesssim \|( \psi,\mathrm{D}_{x'}\psi)\|_{H^m(\Sigma_t)}^2.
 \label{Q4b:es1}
\end{align}
Term $\mathcal{Q}_{4c}$ can be estimated by use of the Moser-type calculus inequalities as
\begin{align}
\left| \mathcal{Q}_{4c}  \right|
 \lesssim_K
 \boldsymbol{\epsilon} \varepsilon \|\mathrm{D}_{x'}^3\psi\|_{H^m(\Sigma_t)}^2
 +C(\boldsymbol{\epsilon})\varepsilon \mathcal{M}_3(t).
 \label{Q4c:es1}
\end{align}
Plugging \eqref{Q4a:es1}--\eqref{Q4c:es1} into \eqref{Q4:es1} implies
\begin{align}
\left|  \int_{\Sigma_{t}} Q_4   \right|
 \lesssim_K
\boldsymbol{\epsilon} \varepsilon \|\mathrm{D}_{x'}^3\psi\|_{H^m(\Sigma_t)}^2
+C(\boldsymbol{\epsilon}) \mathcal{M}_3(t)
\quad \textrm{if } \alpha_0<m.
 \label{Q4:es1b}
\end{align}

If $\alpha_0=m$, then we get from \eqref{Reg.c} that
\begin{align*}
Q_4=
-2  \p_t^m W_2
\left\{ [\p_t^m,\mathring{b}_2]\psi
+\mathring{b}_2\p_t^{m-1} \left(W_2
-(\mathring{v}'\cdot\mathrm{D}_{x'}+\mathring{b}_1)\psi
-\varepsilon  \Delta_{x'}^2 \psi
\right)
\right\},
\end{align*}
which leads to
\begin{align}
 \int_{\Sigma_{t}} Q_4
=\widetilde{\mathcal{Q}}_{4a}
+\widetilde{\mathcal{Q}}_{4b}
+
 2\varepsilon \int_{\Sigma_{t}}  \mathring{b}_2 \p_t^m W_2 \p_t^{m-1}\Delta_{x'}^2 \psi 
, \label{Q4:es2}
\end{align}
where
\begin{align*}
&\widetilde{\mathcal{Q}}_{4a}:=
-\int_{\Sigma}  \p_t^{m-1} W_2
\left\{ \mathring{b}_2\p_t^{m-1} W_2-2\mathring{b}_2
\p_t^{m-1}(\mathring{v}'\cdot\mathrm{D}_{x'}+\mathring{b}_1)\psi
+2[\p_t^{m},\mathring{b}_2]\psi
\right\}
 ,\\
&\widetilde{\mathcal{Q}}_{4b}:=
\int_{\Sigma_{t}}   \p_t^{m-1} W_2
\left\{ \p_t\mathring{b}_2\p_t^{m-1} W_2-2\p_t\left(\mathring{b}_2
\p_t^{m-1}(\mathring{v}'\cdot\mathrm{D}_{x'}+\mathring{b}_1)\psi
-[\p_t^{m},\mathring{b}_2]\psi\right)
\right\}.
\end{align*}
Applying integration by parts and using Cauchy's inequality yield
\begin{align}
\left|  \widetilde{\mathcal{Q}}_{4a}+\widetilde{\mathcal{Q}}_{4b}   \right|
 \lesssim_K \, &
 \|\p_t^{m-1}W_2(t)\|_{L^2(\Sigma)}^2+\|\p_t^{m-1}W_2\|_{L^2(\Sigma_t)}^2
 \nonumber\\
 &+ \big\| \mathring{b}_2\p_t^{m-1} (\mathring{v}'\cdot\mathrm{D}_{x'}+\mathring{b}_1)
 \psi -[\p_t^{m},\mathring{b}_2]\psi \big\|_{H^1(\Sigma_t)}^2.
\nonumber 
\end{align}
Noting from \eqref{d1W:es1} that
\begin{align}
\|\p_t^{m-1}W_2(t)\|_{L^2(\Sigma)}^2
&\lesssim_K \boldsymbol{\epsilon} \| \p_t^{m-1}\p_1 W_2 (t)\|^2_{L^2(\Omega^+)} +C(\boldsymbol{\epsilon}) \|   W_2  \|^2_{H_*^m(\Omega_t^+)}\nonumber\\
&\lesssim_K
\boldsymbol{\epsilon}\sum_{\langle \beta\rangle\leq m} \| \mathrm{D}_*^{\beta} W(t)\|_{L^2(\Omega^+)}^2
+C(\boldsymbol{\epsilon}) \mathcal{M}_1 (t) ,
\label{dtW:es}
\end{align}
we employ the Moser-type calculus inequalities to get
\begin{align}
\left|  \widetilde{\mathcal{Q}}_{4a}+\widetilde{\mathcal{Q}}_{4b}  \right|
 \lesssim_K
\boldsymbol{\epsilon}\sum_{\langle \beta\rangle\leq m} \| \mathrm{D}_*^{\beta} W(t)\|_{L^2(\Omega^+)}^2
+C(\boldsymbol{\epsilon}) \mathcal{M} (t)
\label{Q4a:es2b}
\end{align}
for all $\boldsymbol{\epsilon}>0$, where
$\mathcal{M}(t)$ is defined by \eqref{M.cal}.
Plugging \eqref{Q4a:es2b} into \eqref{Q4:es2}, we find
\begin{align}
\left| \int_{\Sigma_{t}} Q_4  \right|
\lesssim_K
\,& \boldsymbol{\epsilon}\varepsilon \|\p_t^m W_2\|_{L^2(\Sigma_{t})}^2
+C(\boldsymbol{\epsilon}, K)  \varepsilon
\|\p_t^{m-1}\mathrm{D}_{x'}^4 \psi\|_{L^2(\Sigma_{t})}^2\nonumber\\
&+\boldsymbol{\epsilon}\sum_{\langle \beta\rangle\leq m} \| \mathrm{D}_*^{\beta} W(t)\|_{L^2(\Omega^+)}^2
+C(\boldsymbol{\epsilon}) \mathcal{M}(t)
\quad \textrm{if }\alpha_0=m.
\label{Q4:es2b}
\end{align}

\vspace*{1mm}
The first and second terms on the right-hand side of \eqref{Q4:es2b} can be absorbed by the left-hand side of \eqref{es:HO3a} with $\mathrm{D}_*^{\alpha}=\p_t^m$ 
and $\mathrm{D}_*^{\alpha}=\p_t^{m-1}\mathrm{D}_{x'}$, respectively.
Therefore, we plug \eqref{Q4:es1b} and \eqref{Q4:es2b} into \eqref{es:HO3a},
let $\boldsymbol{\epsilon}>0$ be sufficiently small,
and take a suitable combination of the resulting identities to discover
\begin{align}
 \nonumber
 &\|\mathrm{D}_*^{\alpha}W(t)\|_{L^2(\Omega^+)}^2
 +\|\mathrm{D}_*^{\alpha}\nabla\xi(t)\|^2_{L^2(\Omega^-)}
 +\|\mathrm{D}_*^{\alpha}\mathrm{D}_{x'}\psi(t)\|^2_{L^2(\Sigma)}
 \\[1mm]
 \nonumber
 &\qquad
 +\varepsilon\|(\mathrm{D}_*^{\alpha}W_2, \mathrm{D}_*^{\alpha} \mathrm{D}_{x'}^3\psi)\|_{L^2(\Sigma_{t})}^2
 +\varepsilon
 \|(\mathrm{D}_*^{\alpha}\mathrm{D}_{x'}\xi,\mathrm{D}_*^{\alpha}\mathrm{D}_{x'}^2\xi)(t)
 \|_{L^2(\Sigma)}^2
 \\[1mm]
 &  \lesssim_K
 C(\boldsymbol{\epsilon}) \mathcal{M}(t)
 +\varepsilon\mathring{\mathrm{C}}_{m+4} \|\mathrm{D}_{x'}^2\psi\|_{L^{\infty}(\Sigma_t)}^2
 +C(K, \boldsymbol{\epsilon} ) \varepsilon \|(\mathrm{D}_{x'}    \xi, \mathrm{D}_{x'}^2   \xi) \|_{H^m(\Sigma_{t})}^2
 \nonumber\\[1mm]
 &  \qquad
 +\boldsymbol{\epsilon}\varepsilon \| \mathrm{D}_{x'}^3 \psi\|_{H^m(\Sigma_{t})}^2
 +\boldsymbol{\epsilon}\sum_{\langle \beta\rangle\leq m} \| \mathrm{D}_*^{\beta} W(t)\|_{L^2(\Omega^+)}^2
\qquad \textrm{for }\alpha_1=\alpha_4=0.
\label{es1d}
\end{align}

\subsubsection{Conclusion}
Combining \eqref{es1d} with \eqref{es:HO1} and \eqref{es:HO2},
we take $ \boldsymbol{\epsilon}>0$ small enough and use Gr\"{o}nwall's inequality to derive
\begin{align}
 \nonumber
 &\sum_{\langle\alpha\rangle\leq m}  \|\mathrm{D}_*^{\alpha}W(t)\|_{L^2(\Omega^+)}^2
 +\sum_{\langle\alpha\rangle\leq m,\ \alpha_1=\alpha_4=0}
\left(
\|\mathrm{D}_*^{\alpha}\nabla\xi(t)\|^2_{L^2(\Omega^-)}
+\|\mathrm{D}_*^{\alpha}\mathrm{D}_{x'}\psi(t)\|^2_{L^2(\Sigma)}
 \right)
 \nonumber
  \\[2mm]
 &
\qquad+\varepsilon\| (\mathrm{D}_{x'}^3\psi, \mathrm{D}_{x'}\xi, \mathrm{D}_{x'}^2\xi) \|_{H^m(\Sigma_{t})}^2
  \lesssim_K  \mathcal{M}(t)
 +\varepsilon\mathring{\mathrm{C}}_{m+4} \|\mathrm{D}_{x'}^2\psi\|_{L^{\infty}(\Sigma_t)}^2
 \label{es1}
\end{align}
where $\mathcal{M}(t)$ is defined by \eqref{M.cal} ({\it cf.}~\eqref{est:R}, \eqref{M2.cal}, and \eqref{M3.cal}).
To close the above estimate \eqref{es1}, we first obtain from 
\eqref{Reg.c} that
\begin{align}
\|\p_t^m \psi \|_{L^2(\Sigma_{t})}^2  &\lesssim
\|\p_t^{m-1}W_2-\varepsilon \p_t^{m-1}\Delta_{x'}^2\psi\|_{L^2(\Sigma_{t})}^2
+\|( \mathring{v}'\cdot\mathrm{D}_{x'}+\mathring{b}_1)\psi\|_{H^{m-1}(\Sigma_{t})}^2,
\nonumber
\end{align}
and hence
\begin{align}
 \mathcal{M}_3(t)\lesssim_K
\,&\mathcal{M}_1(t) + \varepsilon^2 \| \mathrm{D}_{x'}^3\psi\|_{H^m(\Sigma_{t})}^2
+\sum_{ \alpha_0 <m,\ \alpha_1=\alpha_4=0} \| \mathrm{D}_*^{\alpha}\psi\|_{L^2(\Sigma_t)}^2
\nonumber
\\
&+\|\mathrm{D}_{x'}\psi\|_{H^m(\Sigma_t)}^2
+\mathring{\mathrm{C}}_{m+4} \|( \psi,\mathrm{D}_{x'}\psi)\|_{L^{\infty}(\Sigma_t)}^2.
\label{es1b}
\end{align}
It follows from \eqref{A.ring:def} and \eqref{Reg.b} that
\begin{align*}
\p_1\p_1 \xi =\mathring{\rm c}_2\nabla \xi +\mathring{\rm c}_1  \mathrm{D}_{x'}\nabla\xi
\qquad\textrm{in }\Omega_{T}^-.
\end{align*}
Using the last identity and the Moser-type calculus inequalities, by induction in
$k=0,1,\ldots,m-1$, we can deduce that
\begin{align}
\|\mathrm{D}_*^{\gamma}\p_1^{k+1}\p_1\xi(t)\|^2_{L^2(\Omega^-)}\lesssim_{K} \mathcal{M}_2(t)+\sum_{\langle \alpha\rangle\leq m,\ \alpha_1=\alpha_{4}=0}
\|\mathrm{D}_*^{\alpha} \nabla\xi(t) \|^2_{L^2(\Omega^-)}
\label{es2a}
\end{align}
for all $\langle\gamma\rangle\leq m-k-1$.
In view of \eqref{Q4b:es1} and \eqref{es1}--\eqref{es2a},
we define the energy functional
\begin{align*}
\mathcal{I}(t):=
\,& \sum_{\langle\alpha\rangle\leq m}  \|\mathrm{D}_*^{\alpha}W(t)\|_{L^2(\Omega^+)}^2
+\sum_{|\beta|\leq m}\|\mathrm{D}^{\beta}\nabla\xi(t)\|^2_{L^2(\Omega^-)}\\
&+\sum_{\langle\alpha\rangle\leq m,\ \alpha_1=\alpha_4=0}
\|\mathrm{D}_*^{\alpha}\mathrm{D}_{x'}\psi(t)\|^2_{L^2(\Sigma)}
+\sum_{\alpha_0< m,\ \alpha_1=\alpha_4=0}
\|\mathrm{D}_*^{\alpha} \psi(t)\|^2_{L^2(\Sigma)}
\end{align*}
and find
\begin{align}
 &\mathcal{I}(t)
+\varepsilon\| (\mathrm{D}_{x'}^3\psi, \mathrm{D}_{x'}\xi, \mathrm{D}_{x'}^2\xi)\|_{H^m(\Sigma_{t})}^2
\lesssim_K   \int_0^t \mathcal{I}(\tau)\mathrm{d}\tau
+\mathcal{N}(t)
\label{es2}
\end{align}
for $\varepsilon>0$ small enough,
where
\begin{align*}
\mathcal{N}(t):=\|\bm{f}\|^2_{H^m_*(\Omega_{t}^+)}
+\mathring{\mathrm{C}}_{m+4}
\left(\|(\bm{f},W)\|_{W_*^{2,\infty}(\Omega_t^+)}^2+\|\nabla \xi\|_{L^{\infty}(\Omega_t^-)}^2+ \| \psi\|_{W^{2,\infty}(\Sigma_t)}^2\right).
\end{align*}
Apply Gr\"{o}nwall's inequality to \eqref{es2} and use the embedding inequalities  to infer
\begin{align}
&\mathcal{I}(t)
+\varepsilon\| (\mathrm{D}_{x'}^3\psi, \mathrm{D}_{x'}\xi, \mathrm{D}_{x'}^2\xi)\|_{H^m(\Sigma_{t})}^2
\lesssim_K \mathcal{N}(t)
\nonumber\\
&\  \  \lesssim_K  \|\bm{f}\|^2_{H^m_*(\Omega_{t}^+)}
+\mathring{\mathrm{C}}_{m+4}
 \left(\|(\bm{f},W)\|^2_{H^6_*(\Omega_{t}^+)}
+\|\nabla \xi\|^2_{H^6(\Omega_{t}^-)}
+\|\psi\|^2_{H^5(\Sigma_{t})}
\right)
\label{es3a}
\end{align}
for all $0\leq t\leq T$ provided $T,\varepsilon> 0$ are suitably small.
Integrating \eqref{es3a} over $[0,T]$,
we can find $T_0>0$ depending on $K_0$ ({\it cf.}~\eqref{H:thm.linear}),
such that
\begin{align}
&\|W\|^2_{H^m_*(\Omega_{T}^+)} + \|\nabla \xi\|^2_{H^m(\Omega_{T}^-)}
+\|\mathrm{D}_{x'}\psi\|^2_{H^m(\Sigma_{T})}+\|\psi\|^2_{H^{m-1}(\Sigma_{T})}
\nonumber\\
&\quad   \lesssim_{K_0}
\|\bm{f}\|^2_{H^m_*(\Omega_{T}^+)}+ \|\bm{f}\|^2_{H^6_*(\Omega_{T}^+)}
 \|(\mathring{U},\mathring{h},\mathring{\varphi}) \|_{m+4}^2
\quad \textrm{for  } 0\leq T\leq T_0,\  m\geq 6.
\label{es3b}
\end{align}
Combine \eqref{es1b}, \eqref{es2}, and \eqref{es3b} to get
\begin{align}
&\|W\|^2_{H^m_*(\Omega_{T}^+)} + \|(\xi,\nabla \xi)\|^2_{H^m(\Omega_{T}^-)}
+\|(\psi, \mathrm{D}_{x'}\psi)\|^2_{H^m(\Sigma_{T})}
+\varepsilon\| (\mathrm{D}_{x'}^3\psi, \mathrm{D}_{x'}\xi, \mathrm{D}_{x'}^2\xi)\|_{H^m(\Sigma_{T})}^2
\nonumber\\
&\quad \lesssim_{K_0}
\|\bm{f}\|^2_{H^m_*(\Omega_{T}^+)}+ \|\bm{f}\|^2_{H^6_*(\Omega_{T}^+)}
\|(\mathring{U},\mathring{h},\mathring{\varphi}) \|_{m+4}^2
\quad \textrm{for  } 0\leq T\leq T_0,\  m\geq 6,
\label{es3c}
\end{align}
provided $\varepsilon>0$ is sufficiently small.
Estimate \eqref{es3c} provides the desired uniform-in-$\varepsilon$ estimate for solutions to regularization \eqref{Reg}.

\subsection{Proof of Theorem \ref{thm:linear}}
The uniform-in-$\varepsilon$ high-order estimate \eqref{es3c} enables us to establish the solvability of problem \eqref{ELP3} by passing to the limit $\varepsilon\to 0$.
Indeed, according to \eqref{es3c}, we can extract a subsequence weakly convergent to $(W,\xi, \psi)\in H^m_*(\Omega_{T}^+)\times H^m(\Omega_{T}^-)\times H^m(\Sigma_{T})$ satisfying estimate \eqref{es3c} with $\varepsilon=0$.
Noting that $\p_1 W_2 $ and $\sqrt{\varepsilon}( \Delta_{x'}^2 \psi,  \Delta_{x'}\xi,  \Delta_{x'}^2 \xi)$ are uniformly bounded in $H^{m-2}_*(\Omega_{T}^+)$ and $H^{m-2}(\Sigma_{T})$, respectively,
the passage to the limit $\varepsilon\to 0$ in \eqref{Reg} verifies that $(W,\xi, \psi)$ solves  the reduced problem \eqref{ELP3}. Furthermore, the uniqueness of solutions follows from estimate \eqref{es3c} with $\varepsilon=0$.

Recall from \eqref{W:def}, \eqref{xi:def}, \eqref{ELP3a}, and \eqref{ELP2a} that
$$\dot{V}=V_{\natural}+J(\mathring{\Phi})W, \ \
\dot{h}=h_{\natural}+\p_1\mathring{\Phi}\mathring{\eta}^{\mathsf{T}}\nabla\xi,\ \
\bm{f}=J(\mathring{\Phi})^{\mathsf{T}}
(f^+-\mathbb{L}_{e+}'(\mathring{U},\mathring{\Phi})V_{\natural}).$$
Then using \eqref{h.nat:est}--\eqref{V.natural} and \eqref{es3c} with $\varepsilon=0$, we can apply the embedding and Moser-type calculus inequalities to obtain the tame energy estimate \eqref{tame} for the effective linear problem \eqref{ELP1}.
This completes the proof of Theorem \ref{thm:linear}.

\section{Nonlinear Analysis}\label{sec:Nash-Moser}
In this section, we employ a suitable Nash--Moser iteration scheme to prove Theorem \ref{thm:main}, that is, the solvability of the nonlinear problem \eqref{NP1}. See, for instance, \cite{AG07MR2304160} or \cite{S16MR3524197} for a more general presentation of this method.

\subsection{Approximate solutions}
To apply Theorem \ref{thm:linear}, which is valid for functions vanishing in the past,
we reduce the nonlinear problem \eqref{NP1} to that with zero initial data via the approximate solution.
The compatibility conditions on the initial data introduced below are necessary for constructing the approximate solution.

Let $m\geq 3$ be an integer. Suppose that the initial data $ U_0  \in H^{m+3/2}(\Omega^+)$ and $\varphi_0\in H^{m+2}(\mathbb{T}^{2})$ satisfy
$\|\varphi_0\|_{L^{\infty}(\mathbb{T}^{2})}<1$ and the hyperbolicity condition \eqref{hyperbolicity}.
It follows from \eqref{chi:def} that
\begin{align}
\nonumber
 \p_1\Phi_0\geq \frac{3-3\|\varphi_0\|_{L^{\infty}(\mathbb{T}^{2})} }{3+\|\varphi_0\|_{L^{\infty}(\mathbb{T}^{2})}}>0
\quad \textrm{for } \Phi_0(x):=x_1+\chi(x_1)\varphi_0(x').
\end{align}
Then the initial vacuum magnetic field $h_0\in \mathbb{R}^3$ is uniquely determined by the following div-curl system ({\it cf.}~\eqref{NP1b}--\eqref{NP1d} and Lemma \ref{h.nat:lem}):
\begin{align}
\nonumber
 L_-(\Phi_0)h_0 =0\ \ \textrm{in  }  \Omega^-,\quad  \ \
 h_0\cdot N_0=0 \ \ \textrm{on  }    \Sigma,\quad  \ \
h_0\times \mathbf{e}_1=\bm{j}_{\rm c}(0)\ \ \textrm{on  }    \Sigma^-,
\end{align}
where the operator $L_-$ is defined by \eqref{L-:def} and $N_0:=(1,-\p_2\varphi_0,-\p_3\varphi_0)^{\mathsf{T}}$.
Let us denote
$U_{(\ell)}  := \p_t^{\ell }U|_{t=0}$ and $\varphi_{(\ell)}:= \p_t^{\ell}\varphi|_{t=0}$
for any $\ell\in\mathbb{N}$.
Taking $\ell$ time derivatives of the interior equations \eqref{NP1a} and the first condition in \eqref{NP1c}, we evaluate the resulting identities at the initial time to determine $U_{(\ell)}  $ and $\varphi_{(\ell)}$ inductively.
Then we set $h_{(\ell)}:=\p_t^{\ell} h|_{t=0}$ as the unique solution of the elliptic problem that results from taking $\ell$ time derivatives of the equations \eqref{NP1b}, the third condition in \eqref{NP1c}, and the second condition in \eqref{NP1d}. More precisely, we have the following result (see \cite[Lemma 19]{ST14MR3151094} for the detailed proof).

\begin{lemma}\label{lem:CA1}
Suppose that $m\geq 3$ is an integer,
the surface current $\bm{j}_{\rm c}$ belongs to $H^{m+3/2}([0,T_0]\times \Sigma^-)$ for some $T_0>0$, and
the initial data $(U_0,\varphi_0)\in H^{m+3/2}(\Omega^+)\times H^{m+2}(\mathbb{T}^2)$ satisfy
$\|\varphi_0\|_{L^{\infty}(\mathbb{T}^{2})}<1$ and \eqref{hyperbolicity}.
Then the procedure described above determines
$U_{(\ell)}\in H^{m+3/2-\ell}(\Omega^+)$,
$\varphi_{(\ell)}\in H^{m+2-\ell}(\mathbb{T}^2)$,
and $h_{(\ell)}\in H^{m+3/2-\ell}(\Omega^-)$, for $\ell=0,1,\ldots,m$,
which satisfy
\begin{align}
\sum_{\ell=0}^{m}
\left( \|U_{(\ell)}\|_{H^{m+3/2-\ell}(\Omega^+) }
+\|\varphi_{(\ell)}\|_{H^{m+2-\ell}(\mathbb{T}^2) }+\|h_{(\ell)}\|_{H^{m+3/2-\ell}(\Omega^-) }
\right)\leq C(M_0),
\nonumber
\end{align}
for some positive constant $C(M_0)$ depending on
\begin{align}
 M_0:=
\|U_{0}\|_{H^{m+3/2}(\Omega^+) } +\|\varphi_{0}\|_{H^{m+2}(\mathbb{T}^2) } + \| \bm{j}_{\rm c}\|_{H^{m+3/2}([0,T_0]\times \Sigma^-)}.
 \label{M0:def}
\end{align} 
\end{lemma}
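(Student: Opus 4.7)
\smallskip

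The plan is to construct the Cauchy data of the time derivatives by recursion, solving an ODE-type chain in time for $U$ and $\varphi$ and a stationary elliptic div-curl system for $h$ at each level. For $\ell=0$, set $U_{(0)}:=U_0$, $\varphi_{(0)}:=\varphi_0$, and define $h_{(0)}=h_0$ as the unique solution of the div-curl system written just before the lemma; this is exactly the problem \eqref{div-curl2} with data $\tilde{f}^-=0$, $g_1^-=0$, $g_2^-=\bm{j}_{\rm c}(0)$, so Lemma \ref{h.nat:lem} together with the usual elliptic gain yields $h_{(0)}\in H^{m+3/2}(\Omega^-)$ with norm bounded by $C(M_0)$. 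Note that the hyperbolicity assumption \eqref{hyperbolicity} ensures that $A_0^+(U_0)$ is uniformly positive definite, so the matrix inverse used below is well-defined and smooth in its argument.

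For $\ell\geq 1$, I will proceed inductively. Assuming $U_{(k)}\in H^{m+3/2-k}(\Omega^+)$ and $\varphi_{(k)}\in H^{m+2-k}(\mathbb{T}^2)$ are known for $k\leq \ell-1$, I use the symmetric hyperbolic system \eqref{NP1a} to write
\begin{align*}
\p_t U=-A_0^+(U)^{-1}\Bigl(\widetilde{A}_1^+(U,\Phi)\p_1 U+A_2^+(U)\p_2 U+A_3^+(U)\p_3 U\Bigr)
\end{align*}
and apply $\p_t^{\ell-1}$ to both sides, then evaluate at $t=0$. This yields an explicit polynomial (rational in $\p_1\Phi$) expression for $U_{(\ell)}$ in terms of $\{U_{(k)},\varphi_{(k)}\}_{k\leq \ell-1}$ and their spatial derivatives up to order one. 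Similarly, differentiating the kinematic boundary condition $\p_t\varphi=v\cdot N$ in \eqref{NP1c} yields $\varphi_{(\ell)}$ as a polynomial expression in traces of $\{U_{(k)}\}_{k\leq \ell-1}$ and $\{\varphi_{(k)}\}_{k\leq \ell-1}$ on $\Sigma$. The regularity bookkeeping is the standard one: each time derivative costs one spatial derivative for $U$ and one spatial tangential derivative for $\varphi$, which matches the target spaces $H^{m+3/2-\ell}(\Omega^+)$ and $H^{m+2-\ell}(\mathbb{T}^2)$. The nonlinear bound by $C(M_0)$ follows from Moser-type product and composition estimates in Sobolev spaces, using that the composition $F(U)$ with a smooth $F$ stays in $H^s$ as long as $s>3/2$ (which is ensured because all intermediate indices stay above $3/2$ under the standing assumption $m\geq 3$).

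Having $\{U_{(k)},\varphi_{(k)}\}_{k\leq \ell}$ at hand, I next determine $h_{(\ell)}$. Formally differentiating \eqref{NP1b} and the relevant boundary conditions $\ell$ times and evaluating at $t=0$ yields a div-curl elliptic boundary value problem for $h_{(\ell)}$ of the same shape as \eqref{div-curl2}, with source terms built from products of the already-constructed quantities $\{\varphi_{(k)}\}_{k\leq \ell}$, $\{h_{(k)}\}_{k\leq \ell-1}$, and $\p_t^{\ell}\bm{j}_{\rm c}(0)$. The key observation is that the coefficients $\mathring{\eta},\mathring{A}$ at $t=0$ depend only on $\varphi_0$ and hence are as smooth as $\varphi_0$. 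Applying Lemma \ref{h.nat:lem} (or rather its time-independent elliptic analogue on $\Omega^-$) together with the Moser-type estimates gives $h_{(\ell)}\in H^{m+3/2-\ell}(\Omega^-)$ with norm controlled by $C(M_0)$.

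The main obstacle, and the source of most of the actual work, is the regularity bookkeeping in the Moser estimates across the induction. One must verify at each step that the products and composition terms produced by $\p_t^{\ell-1}$ do not fall below the relevant Sobolev indices, and that the trace operator $H^{s}(\Omega^+)\to H^{s-1/2}(\Sigma)$ delivers the half-integer regularities advertised in the statement. All of this is routine but tedious; a detailed realization can be found in \cite[Lemma 19]{ST14MR3151094}, whose argument adapts verbatim once the elliptic step above is handled via Lemma \ref{h.nat:lem}.
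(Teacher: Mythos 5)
Your proposal follows the same inductive construction the paper itself outlines just above the lemma: use the hyperbolic system \eqref{NP1a} (inverting $A_0^+$, justified by \eqref{hyperbolicity}) together with the kinematic condition to determine $U_{(\ell)},\varphi_{(\ell)}$ recursively, then solve the time-differentiated div-curl problem \eqref{div-curl2} via Lemma~\ref{h.nat:lem} (its $t$-frozen analogue) to obtain $h_{(\ell)}$, with the Sobolev bookkeeping closed by Moser-type product and composition estimates. The paper likewise defers the remaining details to \cite[Lemma 19]{ST14MR3151094}, so your argument matches the intended proof.
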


The compatibility conditions on the initial data are defined as follows.
\begin{definition}
 \label{def:1}
Suppose that all the conditions of Lemma \ref{lem:CA1} are satisfied.
The initial data $(U_0,\varphi_0)$ are said to be compatible up to order $m$,
if  $U_{(\ell)}$, $\varphi_{(\ell)}$, and $h_{(\ell)}$ satisfy the boundary conditions
$v_{1(\ell)}|_{\Sigma^+}=0$
and \begin{align}
 q_{(\ell)} =\, &
 \sum_{i=0}^{\ell -1}  \begin{pmatrix}
  \ell-1 \\ i
 \end{pmatrix} h_{(i)}\cdot h_{(\ell-i)}
 \nonumber\\
 &+ \mathfrak{s}\sum_{\substack{\alpha_{i}\in\mathbb{N}^{2}\\ |\alpha_1|+\cdots+\ell |\alpha_{\ell}|=\ell}}   \,\mathrm{D}_{x'}\cdot
 \bigg(\mathrm{D}_{\zeta}^{\alpha_1+\cdots+\alpha_\ell}\mathfrak{f}\big(\zeta_{(0)}\big)\ell! \prod_{i=1}^\ell\frac{1}{\alpha_{i}!}
 \bigg(\frac{\zeta_{(i)}}{i!}\bigg)^{\alpha_{i}}\bigg)\quad  \textrm{on }\Sigma,
 \label{compat1}
\end{align}
for $\ell=0,\ldots,m$,
where  $\zeta_{(i)}:=\mathrm{D}_{x'}\varphi_{(i)}\in\mathbb{R}^2$ and  $\mathfrak{f}(\zeta):={\zeta}/{\sqrt{1+|\zeta|^2}}.$
\end{definition}
The compatibility conditions \eqref{compat1} results from taking $\ell$ time derivatives of the second condition in \eqref{NP1c} ({\it cf.}~\cite[(3.4)]{TW21b}).
We can construct the approximate solution as in \cite[Lemma 21]{ST14MR3151094} and \cite[Lemma 5.2]{TW22MR4367917}.

\begin{lemma}\label{lem:app}
Suppose that all the conditions of Lemma \ref{lem:CA1} are satisfied.
Suppose further that the initial data $(U_0,\varphi_0)$ are compatible up to order $m$ and satisfy the constraints \eqref{inv1b}--\eqref{inv2b}. Then there exist positive constants $C(M_0)$ and $T_1(M_0)$ depending on $M_0$ \textnormal{(}{\it cf.}~\eqref{M0:def}\textnormal{)}, such that if $0<T\leq T_1(M_0)$, then we can find $(U^a,h^a,\varphi^a)$ that belongs to $H^{m+1}(\Omega_{T}^+)\times H^{m+1}(\Omega_{T}^-)\times H^{m+5/2}(\Sigma_{T})$ and satisfies
 \begin{alignat}{3}
 &\p_t^{\ell}\mathbb{L}_+(U^{a},\Phi^{a})\big|_{t=0}=0
 \quad&&\textnormal{in } \Omega^+ \quad \textnormal{for } \ell=0,\ldots,m-1,
  \label{app1a}\\
 & \mathbb{L}_-(h^{a},\Phi^{a})=0 \quad&&\textnormal{in } \Omega^-_T,
\label{app1b}\\
& \mathbb{B}_+(U^a,h^a,\varphi^a)=0\quad &&\textnormal{on }\Sigma_{T}^2\times \Sigma_{T}^+,
\label{app1c}
\\
&  \mathbb{B}_-(h^a,\varphi^a)=0\quad &&\textnormal{on }\Sigma_{T}\times \Sigma_{T}^-,
 \label{app1d} \\
& (U^{a},h^{a}, \varphi^{a})=(U_0, h_0, \varphi_0)\qquad &&\textnormal{if } t<0,
 \label{app1e}
\end{alignat}
where operators $\mathbb{L}_{\pm}$ and $\mathbb{B}_{\pm}$ are defined in \eqref{NP1a}--\eqref{NP1b} and \eqref{B.bb:def}, respectively,  and
$\Phi^{a}(t,x):=x_1 +\chi(x_1)\varphi^{a}(t,x')$.
 Moreover,
\begin{alignat}{3}
&\|U^{a}\|_{H^{m+1}(\Omega_T^+)}+\|h^{a}\|_{H^{m+1}(\Omega_T^-)} +\|\varphi^a\|_{H^{m+5/2}(\Sigma_T)}  \leq C (M_0),
 \label{app2a}  \\
 &  \rho_*<\inf_{\Omega_T^+} \rho(U^a)\leq \sup_{\Omega_T^+} \rho(U^a)<\rho^*,\quad
\|\varphi^a\|_{L^{\infty}(\Sigma_{T})}\leq \frac{3\|\varphi_0\|_{L^{\infty}(\mathbb{T}^{2})}+1}{4},
 \label{app2b} \\
 & H_1^{a}-H_2^{a}\p_{2} \varphi^{a}-H_3^{a}\p_{3} \varphi^{a}=0\quad \textnormal{on }  \Sigma_T,\quad
 H_1^a=0\quad   \textnormal{on }  \Sigma_T^+.
 \nonumber
\end{alignat}
\end{lemma}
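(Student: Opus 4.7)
The plan is to build $(U^a,h^a,\varphi^a)$ in three stages, exploiting the structural asymmetry that \eqref{app1b} is an elliptic constraint that must be enforced exactly, while \eqref{app1a} is hyperbolic and need only vanish to order $m-1$ at $t=0$. First, I extend the Cauchy data $\{\varphi_{(\ell)}\}_{\ell=0}^m$ furnished by Lemma~\ref{lem:CA1} by applying a right inverse of the trace map $u \mapsto (u,\partial_t u,\ldots,\partial_t^m u)|_{t=0}$. Since $\varphi_{(\ell)} \in H^{m+2-\ell}(\mathbb{T}^2)$, a standard lifting yields $\varphi^a \in H^{m+5/2}(\Sigma_T)$ with $\varphi^a \equiv \varphi_0$ for $t\leq 0$. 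Shrinking $T$, I may ensure the $L^\infty$-bound in \eqref{app2b} and hence $\partial_1\Phi^a > 0$ on $\overline{\Omega_T}$. With $\Phi^a$ fixed, I then solve the elliptic div-curl problem \eqref{app1b}, \eqref{app1d} for $h^a$ parametrically in $t$ via Lemma~\ref{h.nat:lem} (applied with $\tilde f^-=0$, $g_1^-=0$, $g_2^-=\bm{j}_{\rm c}$), obtaining $h^a \in H^{m+1}(\Omega_T^-)$. Differentiating the elliptic system in $t$ and comparing with the construction of $h_{(\ell)}$ in Lemma~\ref{lem:CA1} shows $\partial_t^\ell h^a|_{t=0} = h_{(\ell)}$.

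Next I construct $U^a$. Applying the same lifting technique to $\{U_{(\ell)}\}$ produces a preliminary $\widetilde U^a \in H^{m+1}(\Omega_T^+)$ with $\partial_t^\ell \widetilde U^a|_{t=0} = U_{(\ell)}$. I then set $U^a = \widetilde U^a + \delta U$, where $\delta U$ is supported near $\Sigma_T\cup\Sigma_T^+$ and is built from continuous extension operators lifting boundary traces into the interior, engineered to enforce each of the three conditions in \eqref{app1c} exactly. The boundary defects of $\widetilde U^a$, namely $\widetilde v^a\cdot N^a - \partial_t \varphi^a$ and $\widetilde q^a - \tfrac12|h^a|^2 - \mathfrak{s}\mathcal{H}(\varphi^a)$ on $\Sigma_T$, together with $\widetilde v_1^a$ on $\Sigma_T^+$, all vanish at $t=0$ along with every time derivative up to order $m$, thanks to the compatibility conditions \eqref{compat1} and the matching $\partial_t^\ell h^a|_{t=0}=h_{(\ell)}$ already secured. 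Hence $\delta U$ vanishes to the same order at $t=0$, preserves $\partial_t^\ell U^a|_{t=0} = U_{(\ell)}$, and can be truncated smoothly in the past to satisfy \eqref{app1e}.

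The remaining assertion \eqref{app1a} is then automatic, since $U_{(\ell)}$ and $\varphi_{(\ell)}$ were defined recursively from $\partial_t^\ell \mathbb{L}_+(U,\Phi)|_{t=0}=0$, so the chain rule yields $\partial_t^\ell \mathbb{L}_+(U^a,\Phi^a)|_{t=0}=0$ for $\ell\leq m-1$ independently of how the extension behaves for $t>0$. Estimate \eqref{app2a} and the remaining parts of \eqref{app2b} follow by continuity of the lifting, extension, and elliptic solvers, combined with the bound of Lemma~\ref{lem:CA1}, the hyperbolicity of the initial data, and Sobolev embedding after further restricting $T$. The main technical obstacle I anticipate is the surface-tension correction: since $\mathcal{H}(\varphi^a)$ carries two tangential derivatives of $\varphi^a$, enforcing the dynamic boundary condition exactly requires $\mathcal{H}(\varphi^a)\in H^{m+1/2}(\Sigma_T)$, which is precisely why $\varphi^a$ is taken one half-derivative smoother than the fluid unknowns ($H^{m+5/2}$ versus $H^{m+3/2}$). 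A secondary point requiring care is verifying that Lemma~\ref{h.nat:lem} indeed upgrades the $H^{m+5/2}$ interface regularity into the claimed $H^{m+1}$ interior regularity for $h^a$ uniformly in the time parameter, and that all of these estimates can be arranged to be independent of $T$ on the final reduced interval.
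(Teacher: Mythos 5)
Your three-stage plan --- lift the interface data, solve the elliptic div-curl problem for $h^a$ in terms of $\Phi^a$ and $\bm{j}_{\rm c}$, then correct $U^a$ by extending boundary defects inward --- is the standard construction that the paper delegates to the cited references, so the overall architecture is right. As written, however, your argument has two concrete gaps.

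(1) The lemma also concludes $H_1^a-H_2^a\partial_2\varphi^a-H_3^a\partial_3\varphi^a=0$ on $\Sigma_T$ and $H_1^a=0$ on $\Sigma_T^+$, but these are \emph{not} components of $\mathbb{B}_+$ in \eqref{B.bb:def}, so the correction $\delta U$ you describe (affecting only $q^a$ and $v^a$) never enforces them, and nothing else in your argument does either. The defects $\widetilde H^a\cdot N^a$ and $\widetilde H_1^a$ vanish to all orders in $t$ at $t=0$ thanks to \eqref{inv2b} and the inductive definition of $U_{(\ell)}$, but there is no exact-in-$t$ propagation for an approximate solution; you must add a further correction to $H^a$, built exactly like $\delta v$ and supported near $\Sigma_T\cup\Sigma_T^+$. (2) Your claim that $\widetilde v^a\cdot N^a-\partial_t\varphi^a$ vanishes at $t=0$ together with all time derivatives up to order $m$ is not guaranteed by the lift you chose: the $m$-th time derivative of this defect at $t=0$ equals $\partial_t^m(\widetilde v^a\cdot N^a)|_{t=0}-\partial_t^{m+1}\varphi^a|_{t=0}$, and the second term is left unconstrained by a right inverse of the $m$-jet trace $u\mapsto(\partial_t^\ell u|_{t=0})_{\ell\le m}$. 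If the two do not coincide then $\partial_t^m\delta v|_{t=0}\neq 0$, hence $\partial_t^m U^a|_{t=0}\neq U_{(m)}$ near $\Sigma$, and \eqref{app1a} fails at $\ell=m-1$ (recall $\mathbb{L}_+$ carries a $\partial_t$, so $\partial_t^{m-1}\mathbb{L}_+|_{t=0}$ sees $\partial_t^m U^a|_{t=0}$). The fix is to define $\varphi_{(m+1)}:=\partial_t^m(v\cdot N)|_{t=0}\in H^1(\mathbb{T}^2)$ --- which fits the scale $\varphi_{(\ell)}\in H^{m+2-\ell}(\mathbb{T}^2)$ --- and lift the $(m+1)$-jet $\{\varphi_{(\ell)}\}_{\ell=0}^{m+1}$ instead of the $m$-jet. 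With both corrections, the rest of your argument goes through.
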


The vector function $(U^a,h^a,\varphi^a)$ constructed above is called the {\it approximate solution} to the nonlinear problem \eqref{NP1}.
For
\begin{align}\nonumber 
 f^{a}:=\bigg\{\begin{aligned}
  & -\mathbb{L}_+(U^{a},\Phi^{a}) \quad &\textrm{if }t>0,\\
  & \; 0 \quad &\textrm{if }t<0,
 \end{aligned}
\end{align}
we have from \eqref{app1a} and \eqref{app2a} that
\begin{align}\label{f^a:est}
f^{a}\in H^{m}(\Omega_T^+),\quad
\|f^{a}\|_{ H^{m}(\Omega_T^+)}\leq\delta_0\left(T\right),
\end{align}
where $\delta_0(T)\to 0$ as $T\to 0$.
Using \eqref{app1a}--\eqref{app1e} implies that
$(U,\hat{h},\varphi)$ solves the nonlinear problem \eqref{NP1} on the time interval $[0,T]$,
if $(V,h,\psi):=(U,\hat{h},\varphi)-(U^{a},h^{a},\varphi^a)$ satisfies
\begin{align} \label{P.new}
\left\{
 \begin{aligned}
&\mathcal{L}_+(V,\Psi):=\mathbb{L}_+(U^a+V,\Phi^a+\Psi)-\mathbb{L}_+(U^a,\Phi^a)=f^a &\quad &\textnormal{in }\Omega_T^+,\\
&\mathcal{L}_-(h,\Psi):=\mathbb{L}_-(h^a+h,\Phi^a+\Psi)=0 &&\textnormal{in }\Omega_T^-,\\
&\mathcal{B}_+(V,h,\psi):=\mathbb{B}_+(U^a+V,h^a+h,\varphi^a+\psi)=0&&\textrm{on }\Sigma_T^2\times \Sigma_T^+,\\
&\mathcal{B}_-(h,\psi):=\mathbb{B}_-(h^a+h,\varphi^a+\psi)=0&&\textrm{on }\Sigma_T\times \Sigma_T^-,\\
&(V,h,\psi)=0\ &&\textrm{if }t< 0,
 \end{aligned}\right.
\end{align}
for $\Psi(t,x):=\chi(x_1)\psi(t,x')$.
Note here that as in \eqref{ELP1c} the notation $\Sigma_{T}^2\times \Sigma_{T}^+$ means that
the first two components of the corresponding vector equation are taken on $\Sigma_{T}$ and the third one on $\Sigma_{T}^+$.

\subsection{Nash--Moser iteration}

We first quote the properties on the smoothing operators from
\cite{A89MR976971,CS08MR2423311,T09ARMAMR2481071}.
Denote by $\mathscr{F}_*^s(\Omega_T^+)$ (resp.~$\mathscr{F}^s(\Omega_T^-)$) the class of $H_*^{s}(\Omega_T^+)$ (resp.~$H^{s}(\Omega_T^-)$) vanishing in the past.
\begin{proposition} \label{pro:smooth}
Let $T>0$ and $m\in \mathbb{N}$ with $m\geq 3$.
Then there is a family of smoothing operators $\{\mathcal{S}_{\theta}\}_{\theta\geq 1}:  \,\mathscr{F}_*^3(\Omega_T^+) \to \bigcap_{s\geq 3}\mathscr{F}_*^s(\Omega_T^+)$, such that
 \begin{subequations}\label{smooth.p1}
  \begin{alignat}{2}
   &  \|\mathcal{S}_{\theta} u\|_{H_*^{k}(\Omega_T^+)}\lesssim_m \theta^{(k-j)_+}\|u\|_{H_*^{j}(\Omega_T^+)}
   && \textrm{for  \;}k,j=1,\ldots,m,    \label{smooth.p1a}  \\[1.5mm]
   &  \|\mathcal{S}_{\theta} u-u\|_{H_*^{k}(\Omega_T^+)}\lesssim_m \theta^{k-j}\|u\|_{H_*^{j}(\Omega_T^+)}
   && \textrm{for  \;}1\leq k\leq j \leq m,   \label{smooth.p1b} \\
   &  \left\|\frac{\d}{\d \theta}\mathcal{S}_{\theta} u\right\|_{H_*^{k}(\Omega_T^+)}
   \lesssim_m \theta^{k-j-1}\|u\|_{H_*^{j}(\Omega_T^+)}
   &\quad&\textrm{for \;}k,j=1,\ldots,m,    \label{smooth.p1c}
  \end{alignat}
 \end{subequations}
 where $k,j\in\mathbb{N}$ and $(k-j)_+:=\max\{0,\, k-j \}$.
 Moreover, there exist two families of smoothing operators (still denoted by $\mathcal{S}_{\theta}$) acting respectively on $\mathscr{F}^3(\Omega_T^-)$ and functions defined on $\Sigma_T$, and satisfying the properties in \eqref{smooth.p1} with norms $\|\cdot\|_{H^{j}(\Omega_T^-)}$ and   $\|\cdot\|_{H^{j}(\Sigma_T)}$, respectively.
\end{proposition}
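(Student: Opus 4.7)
The plan is to adapt the classical Littlewood--Paley construction of smoothing operators to the anisotropic spaces $H_*^m(\Omega_T^+)$, closely following the strategies developed in \cite{A89MR976971,CS08MR2423311,T09ARMAMR2481071}. Because the functions must vanish in the past, all convolutions will be set up to preserve causality.

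First, I would construct a continuous linear extension operator $\mathscr{E}: \mathscr{F}_*^m(\Omega_T^+) \to \mathscr{F}_*^m(\mathbb{R}\times(0,1)\times\mathbb{T}^2)$ (the target being functions vanishing for $t<0$ on the full time line). This can be realized by a finite-order reflection across $t=T$ acting only on $\{t>T\}$, which automatically preserves the vanishing at $t<0$; no spatial extension in $x_1$ is needed, since the anisotropic weight $\sigma = x_1(1-x_1)$ already encodes the behavior at the normal boundaries.

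Next, I would define $\mathcal{S}_\theta$ as the composition of two smoothing operations. The first is a Fourier mollifier in the tangential-temporal variables $(t, x_2, x_3)$ with symbol $\chi(\tau/\theta, \eta/\theta)$, where $\chi$ is an even Schwartz bump with $\chi \equiv 1$ near the origin. The second handles the normal variable $x_1$ via the change of variable $y_1 = \log(x_1/(1-x_1))$, under which $\sigma\p_1 = \p_{y_1}$; one applies a standard Friedrichs mollifier in $y_1$ and inverts the change. This construction makes the weight $\langle\alpha\rangle = \sum_{i=0}^3\alpha_i + 2\alpha_4$ appear naturally in the scaling estimates, so that a derivative $\sigma\p_1$ costs one power of $\theta$ while a pure $\p_1$ costs two, matching the definition of $H_*^m$. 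The three estimates \eqref{smooth.p1a}--\eqref{smooth.p1c} then follow from Plancherel-type arguments: \eqref{smooth.p1a} from the localization of $\chi(\cdot/\theta)$ in a ball of radius $\theta$; \eqref{smooth.p1b} from the vanishing of $1 - \chi(\cdot/\theta)$ to arbitrary order at the origin; and \eqref{smooth.p1c} from the observation that $\theta\,\tfrac{d}{d\theta}\chi(\cdot/\theta)$ is a bump of the same type as $\chi$.

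The other two families (on $\mathscr{F}^s(\Omega_T^-)$ and on functions defined on $\Sigma_T$) are obtained by the same scheme with the normal-direction step omitted, since no anisotropy is present. The main technical obstacle is precisely the anisotropy in $H_*^m(\Omega_T^+)$: a naive isotropic mollifier would fail to commute correctly with $\sigma\p_1$ near $x_1 = 0$ and $x_1 = 1$, and the weighted estimates \eqref{smooth.p1a}--\eqref{smooth.p1c} would be off by powers of $\sigma$. The change of variable $y_1 = \log(x_1/(1-x_1))$ resolves this by identifying $H_*^m(\Omega^+)$ with a classical Sobolev-type space in the new coordinates, reducing the construction to the isotropic setting where the estimates are standard.
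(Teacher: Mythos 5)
The paper does not prove Proposition~\ref{pro:smooth}; it states it as a quotation from the references (Alinhac, Coulombel--Secchi, Trakhinin). So your proposal is to be judged on whether it constitutes a valid construction, and I see genuine gaps in the normal direction and in the causality requirement.

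The central claim of your normal-direction step is wrong. The substitution $y_1=\log(x_1/(1-x_1))$ does satisfy $\sigma\p_1=\p_{y_1}$, but it does \emph{not} identify $H_*^m(\Omega^+)$ with a classical Sobolev space in the $y_1$ variable. Two things are overlooked. First, the anisotropic weight $\langle\alpha\rangle=\sum_{i=0}^3\alpha_i+2\alpha_4$ allows \emph{unweighted} normal derivatives $\p_1^{\alpha_4}$, which under your substitution become $\p_1=\sigma^{-1}\p_{y_1}$; since $\sigma^{-1}=e^{-y_1}+2+e^{y_1}$ grows exponentially as $|y_1|\to\infty$, these are exponentially weighted derivatives, not constant-coefficient ones. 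Second, the measure transforms as $dx_1=\sigma\,dy_1$, so $L^2(dx_1)$ becomes the exponentially weighted space $L^2(\sigma\,dy_1)$. The image of $H_*^m$ is therefore a Sobolev space with $y_1$-dependent exponential weights that differ for each level $\alpha_4$; this is precisely the structure that makes $H_*^m$ delicate, and it is not removed by the substitution. As a consequence, a standard Friedrichs mollifier $\rho_\theta*$ in $y_1$ does not give you the needed bound on $\|\p_1^{\alpha_4}\mathcal{S}_\theta u\|_{L^2(\Omega^+)}$: one picks up the factor $\sigma^{-\alpha_4}$ on top of $\p_{y_1}^{\alpha_4}(\rho_\theta*u)$ before re-weighting by $\sigma$, and the net power $\sigma^{1-2\alpha_4}$ is not integrable at $y_1=\pm\infty$ unless $u$ has extra decay that is not available for a general element of $H_*^3$. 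So \eqref{smooth.p1a}, and even the statement $\mathcal{S}_\theta:\mathscr{F}_*^3\to\bigcap_{s}\mathscr{F}_*^s$, are not obtained by this route. The normal smoothing must be built with $\sigma$-adapted cutoffs or with dyadic scales that shrink near $x_1\in\{0,1\}$, not by a fixed-scale mollifier in the logarithmic variable.

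There is a second gap concerning causality. You state at the outset that ``all convolutions will be set up to preserve causality,'' but the operator you then define---a Fourier multiplier with an even bump symbol $\chi(\tau/\theta,\eta/\theta)$---is convolution in $t$ against a Schwartz kernel whose support is two-sided. If $u$ vanishes for $t<0$, then $\mathcal{S}_\theta u$ only vanishes for $t<-C/\theta$, so the range does not lie in $\mathscr{F}_*^s$ as claimed. A one-sided (causal) kernel would cure this, but then its Fourier transform is no longer a bump function supported near the origin and the Plancherel argument you rely on for \eqref{smooth.p1a}--\eqref{smooth.p1c} does not apply directly; an additional reduction (for instance, via exponential weights in time or a post-composition with a cut-off that restores causality without destroying the estimates) is required and is not sketched. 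Finally, as a minor point, the families on $\Omega_T^-$ do also require handling a normal variable $x_1\in(-1,0)$; it is isotropic, so the classical extension-plus-Fourier-multiplier construction applies, but it is not accurate to say the normal step is ``omitted.''
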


We follow \cite{CS08MR2423311,T09ARMAMR2481071,TW21b,TW21MR4201624,ST14MR3151094} to describe the iteration scheme for problem \eqref{P.new}.

\vspace{2mm}
\noindent{\bf Assumption\;(A-1)}: {\it  Set $ (V_0,h_0, \psi_0)=0$. Let $(V_k,h_k,\psi_k)$ be already given, vanish in the past, and satisfy
$V_{k,2}|_{\Sigma_{T}^+}=0$ and $h_k\times \mathbf{e}_1|_{\Sigma_{T}^-}=0$
for $k=0,1,\ldots,{n}$. Define $\Psi_k:=\chi( x_1)\psi_k$.}

\vspace{2mm}
The differences $\delta V_{{n}}:=V_{n+1}-V_n$, $\delta h_{{n}}:=h_{n+1}-h_n$, and $\delta \psi_{{n}}:=\psi_{n+1}-\psi_n$ will be determined through the effective linear problem
\begin{align}
\left\{\begin{aligned}
&\mathbb{L}'_{e+}(U^a+V_{{n}+1/2},\Phi^a+\Psi_{{n}+1/2})\delta \dot{V}_{{n}}=f_{{n}}^+
&\ &\textrm{in } \Omega^+_T, \\
&L_-( \Phi^a+\Psi_{{n}+1/2})\delta \dot{h}_{{n}}=f_{{n}}^-
&&\textrm{in } \Omega^-_T, \\
&\mathbb{B}'_{n+1/2}(\delta \dot{V}_{{n}},\delta \dot{h}_{{n}},\delta\psi_n)=g_n^+
&&\textrm{on } \Sigma_T^2\times \Sigma_{T}^+,\\
& \mathbb{B}'_{e-}(h^a+h_{{n}+1/2}, \varphi^a+\psi_{{n}+1/2}) (\delta\dot{h}_{{n}},\delta\psi_n)=g_n^-
&&\textrm{on } \Sigma_T\times \Sigma_{T}^-,\\
&(\delta \dot{V}_{{n}},\delta \dot{h}_{{n}},\delta\psi_{{n}}) =0 &&
\textrm{if } t<0,
\end{aligned} \right.
 \label{effective.NM}
\end{align}
where $(V_{{n}+1/2},h_{n+1/2}, \psi_{{n}+1/2})$ is a suitable modified state to be specified in Proposition \ref{pro:modified} so that $(U^a+V_{{n}+1/2},h^a+h_{n+1/2},\varphi^a+\psi_{{n}+1/2})$ satisfies \eqref{bas1a}--\eqref{bas1f}, $\Psi_{{n}+1/2}:=\chi(x_1)\psi_{{n}+1/2}$,
and
\begin{align}
\label{B.bb':2}
&\mathbb{B}'_{n+1/2}:=\mathbb{B}'_{e+}(U^a+V_{{n}+1/2}, h^a+h_{{n}+1/2}, \varphi^a+\psi_{{n}+1/2}), \\
 \label{good.NM}
&\delta \dot{V}_{{n}}:=\delta V_{{n}}-\frac{\p_1 (U^a+V_{{n}+1/2})}{\p_1 (\Phi^a+\Psi_{{n}+1/2})}\delta\Psi_{{n}},
 \quad
\delta \dot{h}_{{n}}:=\delta h_{{n}}-\frac{\p_1 (h^a+h_{{n}+1/2})}{\p_1 (\Phi^a+\Psi_{{n}+1/2})}\delta\Psi_{{n}}.
\end{align}
Source terms $f_n^{\pm}, g_n^{\pm}$ will be chosen via the accumulated error terms at Step ${n}$.

\vspace{2mm}
\noindent{\bf Assumption (A-2)}: {\it Set $f_0^+:=\mathcal{S}_{\theta_0}f^a$ and $(e_0^{\pm},\tilde{e}_0^{\pm},f_0^-,g_0^{\pm}):=0$ for $\theta_0\geq 1$ sufficiently large. Let $(e_k^{\pm},\tilde{e}_k^{\pm},f_k^{\pm},g_k^{\pm})$ be given and vanish in the past for $k=1,\ldots,{n}-1$.}

\vspace{1mm}
Under {\bf Assumptions (A-1)}--\textbf{(A-2)}, we set the accumulated error terms 
by
\begin{align}  \label{E.E.tilde}
 E_{{n}}^{\pm}:=\sum_{k=0}^{{n}-1}e_{k}^{\pm},\quad \widetilde{E}_{{n}}^{\pm}:=\sum_{k=0}^{{n}-1}\tilde{e}_{k}^{\pm},
\end{align}
and compute the source terms $f_n^{\pm}, g_n^{\pm}$ from
\begin{align} \label{source}
\sum_{k=0}^{{n}} f_k^+ +\mathcal{S}_{\theta_{{n}}}E_{{n}}^+=\mathcal{S}_{\theta_{{n}}}f^a,\quad
\sum_{k=0}^{{n}}f_k^{-}+\mathcal{S}_{\theta_{{n}}}{E}_{{n}}^{-}=0,\quad
\sum_{k=0}^{{n}}g_k^{\pm}+\mathcal{S}_{\theta_{{n}}}\widetilde{E}_{{n}}^{\pm}=0,
\end{align}
where  $\mathcal{S}_{\theta_{{n}}}$
are the smoothing operators given in Proposition \ref{pro:smooth} with $\theta_{{n}}:=(\theta^2_0+{n})^{1/2}$.
Once $f_n^{\pm}$ and $g_n^{\pm}$ are specified, applying Theorem \ref{thm:linear} to problem \eqref{effective.NM} can determine $(\delta \dot{V}_{{n}},\delta\dot{h}_n, \delta \psi_{{n}})$.
Then we obtain $\delta V_{{n}}$ and $\delta h_{{n}}$ from \eqref{good.NM}.

To define the error terms, we decompose
\begin{align}
\nonumber&\mathcal{L}_+(V_{{n}+1}, \Psi_{{n}+1})-\mathcal{L}_+(V_{{n}}, \Psi_{{n}})
 = \mathbb{L}_+'(U^a+V_{{n}}, \Phi^a+ \Psi_{{n}})(\delta V_{{n}},\delta \Psi_{{n}})+e_{{n}+}'\\
 \nonumber&\quad = \mathbb{L}_+'(U^a+\mathcal{S}_{\theta_{{n}}}V_{{n}}, \Phi^a+\mathcal{S}_{\theta_{{n}}} \Psi_{{n}})(\delta V_{{n}},\delta \Psi_{{n}})+e_{{n}+}'+e_{{n}+}''\\
 \nonumber&\quad = \mathbb{L}_+'(U^a+V_{{n}+1/2}, \Phi^a+ \Psi_{{n}+1/2})(\delta V_{{n}},\delta \Psi_{{n}})+e_{{n}+}'+e_{{n}+}''+e_{{n}+}'''\\
 \label{decom1}&\quad = \mathbb{L}_{e+}'(U^a+V_{{n}+1/2}, \Phi^a+ \Psi_{{n}+1/2})\delta \dot{V}_{{n}}+e_{{n}+}'+e_{{n}+}''+e_{{n}+}'''+e_{{n}+}^{*},
 \\[1mm]
\nonumber&\mathcal{L}_-(h_{{n}+1}, \Psi_{{n}+1})-\mathcal{L}_-(h_{{n}}, \Psi_{{n}})
= \mathbb{L}_-'(h^a+h_{{n}}, \Phi^a+ \Psi_{{n}})(\delta h_{{n}},\delta \Psi_{{n}})+e_{{n}-}'\\
\nonumber&\quad = \mathbb{L}_-'(h^a+\mathcal{S}_{\theta_{{n}}}h_{{n}}, \Phi^a+\mathcal{S}_{\theta_{{n}}} \Psi_{{n}})(\delta h_{{n}},\delta \Psi_{{n}})+e_{{n}-}'+e_{{n}-}''\\
\nonumber&\quad = \mathbb{L}_-'(h^a+h_{{n}+1/2}, \Phi^a+ \Psi_{{n}+1/2})(\delta h_{{n}},\delta \Psi_{{n}})+e_{{n}-}'+e_{{n}-}''+e_{{n}-}'''\\
\label{decom2}&\quad = {L}_{-}(\Phi^a+ \Psi_{{n}+1/2})\delta \dot{h}_{{n}}+e_{{n}-}'+e_{{n}-}''+e_{{n}-}'''+e_{{n}-}^{*},
\end{align}
and
\begin{align}
\nonumber&\mathcal{B}_+(V_{{n}+1},h_{n+1},\psi_{{n}+1})-\mathcal{B}_+(V_{{n}},h_n, \psi_{{n}})  \\
\nonumber&\quad
= \mathbb{B}_+'(U^a+V_{{n}},h^a+h_n,\varphi^a+\psi_{{n}})(\delta V_{{n}},\delta h_n, \delta\psi_{{n}})+\tilde{e}_{{n}+}'\\
 \nonumber&\quad = \mathbb{B}_+'(U^a+\mathcal{S}_{\theta_{{n}}}V_{{n}},h^a+\mathcal{S}_{\theta_{{n}}}h_{{n}},\varphi^a+\mathcal{S}_{\theta_{{n}}}\psi_{{n}})(\delta V_{{n}},\delta h_n, \delta\psi_{{n}})+\tilde{e}_{{n}+}'+\tilde{e}_{{n}+}''\\
 \label{decom3}&\quad =\mathbb{B}'_{n+1/2}(\delta \dot{V}_{{n}} ,\delta\dot{h}_n,\delta\psi_{{n}})+\tilde{e}_{{n}+}'+\tilde{e}_{{n}+}''+\tilde{e}_{{n}+}''',\\
\nonumber&\mathcal{B}_-(h_{n+1},\psi_{{n}+1})-\mathcal{B}_-(h_n, \psi_{{n}})
= \mathbb{B}_-'(h^a+h_n,\varphi^a+\psi_{{n}})(\delta h_n, \delta\psi_{{n}})+\tilde{e}_{{n}-}'\\
\nonumber&\quad = \mathbb{B}_-'(h^a+\mathcal{S}_{\theta_{{n}}}h_{{n}},\varphi^a+\mathcal{S}_{\theta_{{n}}}\psi_{{n}})(\delta h_n, \delta\psi_{{n}})+\tilde{e}_{{n}-}'+\tilde{e}_{{n}-}''\\
\label{decom4}&\quad =\mathbb{B}'_{e-}(h^a+h_{{n}+1/2}, \varphi^a+\psi_{{n}+1/2}) (\delta\dot{h}_n,\delta\psi_{{n}})+\tilde{e}_{{n}-}'+\tilde{e}_{{n}-}''+\tilde{e}_{{n}-}''',
\end{align}
where $\mathbb{L}_{\pm}'$, $\mathbb{L}_{e+}'$, $\mathbb{B}_{\pm}'$, and $\mathbb{B}'_{n+1/2}$ is given in \eqref{Alinhac1}--\eqref{Alinhac2}, \eqref{ELP1a}, \eqref{B'+.bb:def}--\eqref{B'-.bb:def}, and \eqref{B.bb':2}, respectively.
The description of the iteration scheme is completed by setting
\begin{align} \label{e.e.tilde}
 e_{{n}}^{\pm}:=e_{{n}\pm}'+e_{{n}\pm}''+e_{{n}\pm}'''+e_{n\pm}^*,\quad
 \tilde{e}_{{n}}^{\pm}:=\tilde{e}_{{n}\pm}'+\tilde{e}_{{n}\pm}''+\tilde{e}_{{n}\pm}'''.
\end{align}

Let us formulate the inductive hypothesis.
Set $m\in \mathbb{N}$ with $m\geq {13}$ and $\widetilde{\alpha}:=m-{5}$.
The initial data $(U_0,\varphi_0)$ are supposed to satisfy all the conditions of Lemma \ref{lem:app}, which implies estimates \eqref{app2a}--\eqref{f^a:est}.
Moreover, {\bf Assumptions (A-1)}--\textbf{(A-2)} are supposed to hold.
For some integer $ {\alpha }\in (6,\widetilde{\alpha})$ and constant $\boldsymbol{\epsilon}>0$ to be chosen later on, our inductive hypothesis reads
\begin{align*}
 (\mathbf{H}_{{n}-1}) \left\{\begin{aligned}
 \textrm{(a)}\,\,  &
 \|(\delta V_k, \delta h_k, \delta\psi_k)\|_{s}+\|\delta \Psi_k\|_{H^s(\Omega_{T})}+\|\mathrm{D}_{x'}\delta\psi_k\|_{H^{s}(\Sigma_T)}
\leq \boldsymbol{\epsilon} \theta_k^{s-{\alpha }-1}\varDelta_k
 \\
 &\quad \quad \textrm{for all } k=0,\ldots,{n}-1\textrm{ and }s=6,\ldots,\widetilde{\alpha} ;\\[1mm]
 \textrm{(b)}\,\, &\max\big\{\|\mathcal{L}_+( V_k,  \Psi_k)-f^a\|_{H_*^s(\Omega_{T}^+)},\, \|\mathcal{L}_-( h_k,  \Psi_k)\|_{H^{s+1}(\Omega_{T}^-)}\big\} \leq 2 \boldsymbol{\epsilon} \theta_k^{s-{\alpha }-1}\\
& \quad \quad  \textrm{for all } k=0,\ldots,{n}-1\textrm{ and } s= 6,\ldots,\widetilde{\alpha}-2;\\[1mm]
 \textrm{(c)}\,\,  &\|(\mathcal{B}_+( V_k, h_k, \psi_k),\,\mathcal{B}_-(h_k, \psi_k))\|_{H^{s} \times H^{s+1}} \leq  \boldsymbol{\epsilon} \theta_k^{s-{\alpha }-1}
 \\
 & \quad \quad  \textrm{for all } k=0,\ldots,{n}-1\textrm{ and } s=7,\ldots,{\alpha },
 \end{aligned}\right.
\end{align*}
for $\varDelta_{k}:=\theta_{k+1}-\theta_k$ with $\theta_{k}:=(\theta^2_0+k)^{1/2}$, where $\|(U,h,\varphi)\|_{s}$ and $\|(g^+,g^-)\|_{H^s\times H^{s+1}}$ are defined by \eqref{norm:ring} and \eqref{norm:g}, respectively.
Using hypothesis $(\mathbf{H}_{{n}-1})$ leads to the following result as in \cite[Lemmas 6--7]{CS08MR2423311}.
\begin{lemma}
\label{lem:triangle}
 If $\theta_0$ is large enough, then
 \begin{align}
 &\|(V_k,h_k,\psi_k) \|_{s}
 +\| \Psi_k\|_{H^s(\Omega_{T})}
 \leq \bigg\{\begin{aligned}
  &\boldsymbol{\epsilon} \theta_k^{(s-{\alpha })_+}   &&\textrm{if }s\neq {\alpha },\\
  &\boldsymbol{\epsilon} \log \theta_k   &&\textrm{if }s= {\alpha },
 \end{aligned}
\nonumber  
\\[0.5mm]
 & \big\|\big(({I}-\mathcal{S}_{\theta_k})V_k,\, ({I}-\mathcal{S}_{\theta_k})h_k ,\, ({I}-\mathcal{S}_{\theta_k})\psi_k\big)\big\|_{s}
 +\big\|({I}-\mathcal{S}_{\theta_k})\Psi_k  \big\|_{H^s(\Omega_{T})}
 \lesssim \boldsymbol{\epsilon} \theta_k^{s-{\alpha }},
\nonumber 
 \end{align}
for $k=0,\ldots,{n}-1$ and  $s=6,\ldots,\widetilde{\alpha}$,
where $\|\cdot \|_{s}$ is defined by \eqref{norm:ring}. Moreover,
 \begin{align}
 &
\big\|\big(\mathcal{S}_{\theta_k}V_k ,\, \mathcal{S}_{\theta_k}h_k ,\,\mathcal{S}_{\theta_k}\psi_k\big)\big\|_{s}
+\big\|\mathcal{S}_{\theta_k}\Psi_k  \big\|_{H^s(\Omega_{T})}
\lesssim  \left\{\begin{aligned}
  &\boldsymbol{\epsilon} \theta_k^{(s-{\alpha })_+}  &&\textrm{if }s\neq {\alpha },\\
  &\boldsymbol{\epsilon} \log \theta_k  &&\textrm{if }s= {\alpha },
 \end{aligned}\right.  \nonumber  
 \end{align}
 for $k=0,\ldots,{n}-1$ and $s=6,\ldots,\widetilde{\alpha}+6$.
\end{lemma}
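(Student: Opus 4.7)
The plan is to exploit the telescoping structure $V_k=\sum_{j=0}^{k-1}\delta V_j$ (and analogously for $h_k,\psi_k,\Psi_k$), bound each increment by $(\mathbf{H}_{n-1})$(a), and then reduce the discrete sums over $j$ to elementary integrals against $\theta^{s-\alpha-1}\,\mathrm{d}\theta$. The three asserted estimates all follow from this single mechanism, combined with the smoothing properties in Proposition \ref{pro:smooth}.

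First I would verify the following dyadic comparison: since $\theta_j=(\theta_0^2+j)^{1/2}$ and $\varDelta_j=\theta_{j+1}-\theta_j\sim \theta_j^{-1}$, a standard Riemann sum argument gives, for any $\sigma\in\mathbb{R}$,
\begin{align*}
\sum_{j=0}^{k-1}\theta_j^{\sigma}\varDelta_j
\ \lesssim\ \int_{\theta_0}^{\theta_k}\theta^{\sigma}\,\mathrm{d}\theta
\ \lesssim\ \begin{cases}\theta_k^{\sigma+1} & \text{if }\sigma>-1,\\ \log\theta_k & \text{if }\sigma=-1,\\ 1 & \text{if }\sigma<-1,\end{cases}
\end{align*}
the last case being bounded uniformly in $k$ provided $\theta_0$ is chosen sufficiently large. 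Applying this with $\sigma=s-\alpha-1$ to the telescoping identity $\|V_k\|_s\leq\sum_{j=0}^{k-1}\|\delta V_j\|_s\leq\boldsymbol{\epsilon}\sum_{j=0}^{k-1}\theta_j^{s-\alpha-1}\varDelta_j$, and handling $h_k,\psi_k,\Psi_k$ in the same way, yields the first estimate with the stated trichotomy $\theta_k^{(s-\alpha)_+}$ vs.\ $\log\theta_k$.

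For the second estimate, I would use Proposition \ref{pro:smooth}\eqref{smooth.p1b} with $j=\widetilde{\alpha}$: combining $\|(I-\mathcal{S}_{\theta_k})V_k\|_{H_*^s}\lesssim \theta_k^{s-\widetilde{\alpha}}\|V_k\|_{H_*^{\widetilde{\alpha}}}$ with the first estimate at $s=\widetilde{\alpha}$ gives $\lesssim\boldsymbol{\epsilon}\theta_k^{s-\widetilde{\alpha}}\theta_k^{\widetilde{\alpha}-\alpha}=\boldsymbol{\epsilon}\theta_k^{s-\alpha}$ for $s\leq\widetilde{\alpha}$; the vacuum, interface and lift pieces are treated in exactly the same way using the corresponding families of smoothing operators. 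For the third estimate I would split into two ranges: for $s\leq\alpha$ one uses Proposition \ref{pro:smooth}\eqref{smooth.p1a} with $k=j=s$ together with the trivial bound $\|\mathcal{S}_{\theta_k}V_k\|_s\lesssim\|V_k\|_s$, which already lies in the desired range by the first estimate; for $\alpha<s\leq\widetilde{\alpha}+6$ one applies \eqref{smooth.p1a} with $j=\widetilde{\alpha}$ and uses $\|V_k\|_{\widetilde{\alpha}}\lesssim\boldsymbol{\epsilon}\theta_k^{\widetilde{\alpha}-\alpha}$ to obtain
\begin{align*}
\|\mathcal{S}_{\theta_k}V_k\|_s\ \lesssim\ \theta_k^{(s-\widetilde{\alpha})_+}\,\|V_k\|_{\widetilde{\alpha}}\ \lesssim\ \boldsymbol{\epsilon}\,\theta_k^{(s-\alpha)_+},
\end{align*}
since $(s-\widetilde{\alpha})_++(\widetilde{\alpha}-\alpha)=s-\alpha$ when $s\geq\widetilde{\alpha}$, and the same quantity is bounded by $\widetilde{\alpha}-\alpha=(s-\alpha)_+$ at the joining point; near $s=\alpha$ the logarithmic endpoint is recovered by choosing $j=s$ directly.

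The only subtle step is the case $s=\alpha$ in the first estimate, where the exponent $\sigma=-1$ produces the $\log\theta_k$ factor, and the corresponding borderline in the third estimate, which must be handled by selecting $j=s$ in \eqref{smooth.p1a} rather than $j=\widetilde{\alpha}$; this is where the inductive hypothesis's careful exclusion of the critical exponent becomes essential. Apart from this borderline bookkeeping, the argument is entirely formal, and the rest is a routine application of the smoothing estimates to each of the four quantities $V_k,h_k,\psi_k,\Psi_k$ in parallel.
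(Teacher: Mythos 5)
Your overall strategy — telescoping $V_k=\sum_{j<k}\delta V_j$, applying $(\mathbf{H}_{n-1})$(a), and comparing the discrete sum $\sum_j\theta_j^{s-\alpha-1}\varDelta_j$ to the integral $\int\theta^{s-\alpha-1}\d\theta$ with the three cases $\sigma>-1$, $\sigma=-1$, $\sigma<-1$ — is exactly the classical argument the paper is invoking by citing Coulombel--Secchi (Lemmas 6--7 of \cite{CS08MR2423311}); the first two assertions are handled correctly.

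However, there is a small but genuine gap in your third estimate. You split the range at $s=\alpha$ and, for $\alpha<s\le\widetilde\alpha+6$, apply \eqref{smooth.p1a} with $j=\widetilde\alpha$. For $\alpha<s<\widetilde\alpha$ this gives
\[
\|\mathcal{S}_{\theta_k}V_k\|_s\ \lesssim\ \theta_k^{(s-\widetilde\alpha)_+}\|V_k\|_{\widetilde\alpha}\ =\ \theta_k^{0}\,\|V_k\|_{\widetilde\alpha}\ \lesssim\ \boldsymbol{\epsilon}\,\theta_k^{\widetilde\alpha-\alpha},
\]
and since here $\widetilde\alpha=\alpha+3$ while $s-\alpha\in\{1,2\}$, the exponent $\widetilde\alpha-\alpha$ is strictly larger than $(s-\alpha)_+=s-\alpha$; the bound is therefore too weak in that subrange. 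The fix is to move the split to $s=\widetilde\alpha$ rather than $s=\alpha$: for all $6\le s\le\widetilde\alpha$ one takes $j=s$ in \eqref{smooth.p1a} to get $\|\mathcal{S}_{\theta_k}V_k\|_s\lesssim\|V_k\|_s$ and then invokes the first assertion of the lemma directly (giving $\theta_k^{(s-\alpha)_+}$ or $\log\theta_k$ as appropriate), while the choice $j=\widetilde\alpha$ is reserved for $\widetilde\alpha<s\le\widetilde\alpha+6$, where $(s-\widetilde\alpha)_++(\widetilde\alpha-\alpha)=s-\alpha=(s-\alpha)_+$ gives exactly the claimed rate. With that adjustment the proof is complete and matches the paper's (uncited) argument.

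One stylistic point: the lemma states the first assertion with $\le\boldsymbol{\epsilon}\theta_k^{(s-\alpha)_+}$ rather than $\lesssim$. Your integral comparison does deliver this, but it is worth noting explicitly that the case $\sigma<-1$ yields $\sum_j\theta_j^\sigma\varDelta_j\le C\theta_0^{\sigma+1}$, which is made $\le1$ by choosing $\theta_0$ large — this is precisely where the hypothesis ``$\theta_0$ large enough'' enters.
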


\subsection{Estimate of the error terms}
This section is devoted to obtaining the estimate for the error terms $E_{k}^{\pm}$ and $\widetilde{E}_{k}^{\pm}$ defined by \eqref{E.E.tilde} and \eqref{e.e.tilde}.
For this purpose, we need estimates for the second derivatives $\mathbb{L}_{\pm}''$ and $\mathbb{B}_{\pm}''$ of  the operators $\mathbb{L}_{\pm}$ and $\mathbb{B}_{\pm}$.
In particular, for $\mathbb{B}_{\pm}$ defined by \eqref{B.bb:def}, we have
\begin{align}
\nonumber
&\mathbb{B}_+''\big(\mathring{\varphi}\big)\big((V,h,\psi),(\widetilde{V},\tilde{h},\tilde{\psi}) \big) \\  & \quad =
\begin{pmatrix}
\tilde{v}'   \cdot \mathrm{D}_{x'} \psi + v'  \cdot \mathrm{D}_{x'}  \tilde{\psi} \\
 \mathfrak{s}\mathrm{D}_{x'}\cdot
 \left(
 \dfrac{\mathring{\zeta}\cdot\tilde{\zeta}}{|\mathring{N}|^3}\zeta
 -\dfrac{\tilde{\zeta}\cdot {\zeta}}{|\mathring{N}|^3}\mathring{\zeta}
 -\dfrac{\mathring{\zeta}\cdot {\zeta}}{|\mathring{N}|^3}\tilde{\zeta}
 +\dfrac{3(\mathring{\zeta}\cdot {\zeta})(\mathring{\zeta}\cdot\tilde{\zeta})}{|\mathring{N}|^5}\mathring{\zeta}
 \right)-h\cdot\tilde{h}\\ 0
\end{pmatrix},
\label{B''_+:def}
\\[0.5mm]
&\mathbb{B}_-''\big((h,\psi),(\tilde{h},\tilde{\psi}) \big)=
\begin{pmatrix}
-\tilde{h}'   \cdot \mathrm{D}_{x'} \psi - h'  \cdot \mathrm{D}_{x'}  \tilde{\psi} \\[0.5mm]
0
\end{pmatrix},
\label{B''_-:def}
\end{align}
with $\zeta:=\mathrm{D}_{x'}\psi$, $\mathring{\zeta}:=\mathrm{D}_{x'}\mathring{\varphi}$, and $\tilde{\zeta}:=\mathrm{D}_{x'}\tilde{\psi}$.
We employ the embedding and Moser-type calculus inequalities to derive the following result ({\it cf.}~\cite[Proposition 23]{ST14MR3151094}).
\begin{proposition}  \label{pro:tame2}
Let $T>0$, $s\in\mathbb{N}$ with $s\geq 6$, and 
\begin{align*}
(V_i,h_i,\Psi_i,\psi_i)\in H_*^{s+2}(\Omega_T^+)\times H^{s+2}(\Omega_T^-)\times H^{s+2}(\Omega_T)\times H^{s+2}(\Sigma_T)
\ \textrm{for } i=0,1,2.
\end{align*}
If
$\|(V_0,\Psi_0)\|_{H_*^{6}(\Omega_T^+)} +\|(h_0,\Psi_0)\|_{H^{4}(\Omega_T^-)}  +\|{\psi}_0\|_{H^{3}(\Sigma_T)}\leq \widetilde{K}$
for some $\widetilde{K}>0$, then
\begin{align*}
&\big\|\mathbb{L}_+''\big(V_0,{\Psi}_0 \big) \big((V_1,\Psi_1),(V_2,\Psi_2) \big)\big\|_{H_*^s(\Omega_{T}^+)} \lesssim_{\widetilde{K}}
 \sum_{i+ j=3} \|(V_i,\Psi_i)\|_{H_*^{6}(\Omega_{T}^+)}   \\
&\qquad\qquad\quad    \times \|(V_j,\Psi_j)\|_{H_*^{s+2}(\Omega_{T}^+)}
+ \| (V_0,{\Psi}_0 )\|_{H_*^{s+2}(\Omega_{T}^+)} \prod_{i=1,2} \|(V_i,\Psi_i)\|_{H_*^{6}(\Omega_{T}^+)},\\[0.5mm]
&\big\|\mathbb{L}_-''\big({h}_0,{\Psi}_0 \big) \big((h_1,\Psi_1),(h_2,\Psi_2) \big)\big\|_{H^{s+1}(\Omega_{T}^-)} \lesssim_{\widetilde{K}}
\sum_{i+ j=3} \|(h_i,\Psi_i)\|_{H^{4}(\Omega_{T}^-)}   \\
&\qquad\qquad\quad    \times \|(h_j,\Psi_j)\|_{H^{s+2}(\Omega_{T}^-)}
+ \| ({{h}}_0,{\Psi}_0 )\|_{H^{s+2}(\Omega_{T}^-)} \prod_{i=1,2} \|(h_i,\Psi_i)\|_{H^{4}(\Omega_{T}^-)},
\end{align*}
and
\begin{align}
&\left\|\left(\mathbb{B}_+''\big(\psi_0 \big)
 \big((V_1,h_1,\psi_1),(V_2,h_2,\psi_2) \big),\, \mathbb{B}_-''\big((h_1,\psi_1),(h_2,\psi_2) \big) \right)\right\|_{H^{s}\times H^{s+1}}
\nonumber\\[1.5mm]
&\lesssim_{\widetilde{K}}
\|\psi_0\|_{H^{s+2}(\Sigma_T)} \prod_{i=1,2} \|\psi_i\|_{H^{3}(\Sigma_T)}
+\sum_{i+ j=3}
\Big\{ \|\psi_i\|_{H^{3}(\Sigma_T)}\|\psi_j\|_{H^{s+2}(\Sigma_T)}
+\|h_i\|_{H^{2}(\Sigma_T)}
 \nonumber\\
&\quad\ \  \times \|h_j\|_{H^{s}(\Sigma_T)}  + \|(V_i,h_i)\|_{H^{s+1}(\Sigma_T)}\|\psi_j\|_{H^{3}(\Sigma_T)}
 + \|(V_i,h_i)\|_{H^{2}(\Sigma_T)} \|\psi_j\|_{H^{s+2}(\Sigma_T)}\Big\},
 \nonumber
\end{align}
where the norm $\|\cdot\|_{H^s\times H^{s+1}}$ is defined by \eqref{norm:g}.
\end{proposition}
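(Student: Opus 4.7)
The plan is to expand the second Fréchet derivatives explicitly and then apply the anisotropic Moser-type product and composition inequalities, so each estimate comes out ``tame'' (one factor carried at the high regularity, the others at low regularity fixed by the embedding $H_*^6 \hookrightarrow L^\infty$, resp.\ $H^4\hookrightarrow L^\infty$ and $H^3(\Sigma_T)\hookrightarrow L^\infty$).

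For the interior operator $\mathbb{L}_+(U,\Phi)=L_+(U,\Phi)U$, since $L_+$ is first order in $(U,\Phi)$ with coefficients $A_i^+(U)$ and $\widetilde{A}_1^+(U,\Phi)$ that are smooth in their arguments, the second variation $\mathbb{L}_+''(V_0,\Psi_0)((V_1,\Psi_1),(V_2,\Psi_2))$ is a finite sum of terms of the schematic form
\begin{equation*}
\mathfrak{a}(V_0,\nabla\Psi_0)\,(V_i,\nabla\Psi_i)\,\mathrm{D}(V_j,\Psi_j) \quad\text{with } \{i,j\}=\{1,2\},
\end{equation*}
plus lower-order analogues, where $\mathfrak{a}$ collects the Hessians of $A_i^+$ and $\widetilde{A}_1^+$ at $(V_0,\Psi_0)$, and $\mathrm{D}$ denotes either $\partial_t$ or a spatial first derivative. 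Applying the anisotropic Moser product inequality (\cite[Theorem B.3]{MST09MR2604255}) to each triple product, distributing the $H_*^s$ norm onto exactly one factor and bounding the other two in $L^\infty$ via the embedding $H_*^6\hookrightarrow L^\infty$, combined with the composition estimate for $\mathfrak{a}(V_0,\nabla\Psi_0)$ controlled by $\widetilde{K}$ through $\|(V_0,\Psi_0)\|_{H_*^6}$, yields the first inequality; the term where $\mathfrak{a}$ itself is carried at high regularity produces the $\|(V_0,\Psi_0)\|_{H_*^{s+2}}$ summand. The analogous argument for $\mathbb{L}_-(h,\Phi)$, which is first order with coefficients $\widetilde{A}_1^-(\Phi)$ depending only on $\Psi_0$ (the $A_i^-$ for $i=2,3$ being constant), gives the second estimate in the standard $H^{s+1}$ norm, where the base $L^\infty$ control uses $H^4(\Omega_T^-)\hookrightarrow L^\infty$.

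For the boundary operators we work directly from the explicit formulas \eqref{B''_+:def}--\eqref{B''_-:def}. The component of $\mathbb{B}_+''$ involving $\tilde v'\cdot\mathrm{D}_{x'}\psi + v'\cdot\mathrm{D}_{x'}\tilde\psi$ and the component $-h\cdot\tilde h$ are simple bilinear products and are controlled by the $2$D Moser inequality on $\Sigma_T$ using $H^3(\Sigma_T)\hookrightarrow L^\infty$: one factor in $H^{s+1}$ (for $V_i,h_i$) or $H^{s+2}$ (for $\psi_j$ when a tangential derivative is consumed), the other in $H^3$ or $H^2$. The genuinely delicate term is the surface-tension second variation, a rational function of $\mathring{\zeta}:=\mathrm{D}_{x'}\psi_0$ applied to $\zeta,\tilde\zeta$ under an outer $\mathrm{D}_{x'}$; since the denominators $|\mathring N|^3,|\mathring N|^5$ are smooth bounded functions of $\mathring\zeta$ under the $L^\infty$ bound enforced by $\widetilde K$, a composition estimate controls the rational coefficient in $H^{s+1}(\Sigma_T)$ by $\|\psi_0\|_{H^{s+2}(\Sigma_T)}$, and the remaining bilinear product in $\zeta,\tilde\zeta$ is handled again by Moser, producing exactly the stated bound (the outer $\mathrm{D}_{x'}$ accounts for the loss from $H^{s+2}$ on $\psi_j$ down to $H^{s+1}$ in the norm \eqref{norm:g}). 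The $\mathbb{B}_-''$ contribution is a clean bilinear surface product and is estimated identically.

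I do not expect conceptual difficulties: once the explicit forms of $\mathbb{L}_\pm''$ and $\mathbb{B}_\pm''$ are written down, the proof is a bookkeeping exercise in product-, composition-, and embedding-type inequalities. The main care point (rather than obstacle) is to keep the regularity distribution tight: in every multilinear term exactly one factor must be allowed to sit at top regularity ($H_*^{s+2}$, $H^{s+2}$, or $H^{s+2}(\Sigma_T)$), all others being forced into the low-regularity spaces listed in the hypothesis, and to verify that the outer derivative appearing in the curvature term is absorbed precisely by the shift from $H^{s+2}(\Sigma_T)$ on $\psi_j$ to $H^{s+1}(\Sigma_T)$ inside the $\|\cdot\|_{H^s\times H^{s+1}}$ norm. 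With that accounting in place, summing all contributions yields the stated inequalities.
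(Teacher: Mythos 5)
Your proof is correct and follows the same route the paper indicates: the paper gives no written-out argument for Proposition \ref{pro:tame2}, only the remark that it follows from embedding and Moser-type calculus inequalities (citing \cite[Proposition 23]{ST14MR3151094}), and your reconstruction — expanding $\mathbb{L}_\pm''$ and $\mathbb{B}_\pm''$ into multilinear terms, putting exactly one factor at top regularity with the rest in $L^\infty$ via $H_*^6\hookrightarrow L^\infty$, $H^4\hookrightarrow L^\infty$, $H^3(\Sigma_T)\hookrightarrow L^\infty$, and handling the curvature coefficient by a composition estimate — is precisely the intended bookkeeping. The one point worth making explicit is that the jump to $H_*^{s+2}$ on the high-regularity factor in the first estimate comes from the normal derivative $\partial_1$ carrying anisotropic weight $2$, which your schematic form glosses over but which your final accounting respects.
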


We first employ Proposition \ref{pro:tame2} to estimate the quadratic error terms $e'_{k\pm}$ and $\tilde{e}_{k\pm}'$ defined in \eqref{decom1}--\eqref{decom4}.

\begin{lemma}\label{lem:quad}
If $\theta_0\geq 1$ is large enough and $\boldsymbol{\epsilon}>0$ is sufficiently small, then
 \begin{align}\nonumber 
  \|e_{k+}'\|_{H_*^s(\Omega_{T}^+)}+\|e_{k-}'\|_{H^{s+1}(\Omega_{T}^-)}+ \|(\tilde{e}_{k+}',\, \tilde{e}_{k-}')\|_{H^{s}\times H^{s+1}}
  \lesssim \boldsymbol{\epsilon}^2 \theta_k^{\varsigma_1(s)-1}\varDelta_k,
 \end{align}
with $\varsigma_1(s):=\max\{(s+2-{\alpha })_++10-2{\alpha },\,s+6-2{\alpha } \},$
for all $k\in \{0,\ldots,{n}-1\}$  and $s\in \{6,\ldots,\widetilde{\alpha}-2\}$.
\end{lemma}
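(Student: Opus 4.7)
The plan is to recognize each of $e_{k\pm}'$ and $\tilde e_{k\pm}'$ as a second-order Taylor remainder and then invoke Proposition \ref{pro:tame2}, feeding in the regularity bounds from $(\mathbf{H}_{{n}-1})(\text{a})$ and Lemma \ref{lem:triangle}. Starting from the decompositions \eqref{decom1}--\eqref{decom4}, the fundamental theorem of calculus applied to the one-parameter family $(V_k+\tau\delta V_k,\Psi_k+\tau\delta\Psi_k)$ gives
\[
e_{k+}'=\int_0^1(1-\tau)\,\mathbb{L}''_+\big(U^a+V_k+\tau\delta V_k,\,\Phi^a+\Psi_k+\tau\delta\Psi_k\big)\big((\delta V_k,\delta\Psi_k),(\delta V_k,\delta\Psi_k)\big)\,d\tau,
\]
with exactly analogous integral representations for $e_{k-}'$ (via $\mathbb{L}''_-$) and for $\tilde e_{k\pm}'$ (via $\mathbb{B}''_\pm$ of \eqref{B''_+:def}--\eqref{B''_-:def}). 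Since ${\alpha}>6$, Lemma \ref{lem:triangle} together with the estimates for $(U^a,h^a,\varphi^a)$ in Lemma \ref{lem:app} shows that the baseline argument of each of these bilinear maps is controlled in the low-index norm $H^6_*(\Omega_T^+)$, $H^4(\Omega_T^-)$, $H^6(\Omega_T)$, $H^3(\Sigma_T)$ by a constant depending only on $M_0$, so Proposition \ref{pro:tame2} applies with a uniform constant.

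For the bulk term, the first bound in Proposition \ref{pro:tame2} reduces to
\[
\|e_{k+}'\|_{H^s_*(\Omega_T^+)}\lesssim \|(\delta V_k,\delta\Psi_k)\|_{H^6_*}\|(\delta V_k,\delta\Psi_k)\|_{H^{s+2}_*}+\|(V_0,\Psi_0)\|_{H^{s+2}_*}\|(\delta V_k,\delta\Psi_k)\|_{H^6_*}^2,
\]
where $(V_0,\Psi_0):=(U^a+V_k+\tau\delta V_k,\Phi^a+\Psi_k+\tau\delta\Psi_k)$. Inserting the $\delta$-estimates $\|(\delta V_k,\delta\Psi_k)\|_{H^\sigma_*}\lesssim\boldsymbol{\epsilon}\theta_k^{\sigma-{\alpha}-1}\varDelta_k$ for $\sigma\in\{6,s+2\}$ from $(\mathbf{H}_{{n}-1})(\text{a})$, and the bound $\|(V_0,\Psi_0)\|_{H^{s+2}_*}\lesssim C(M_0)+\boldsymbol{\epsilon}\theta_k^{(s+2-{\alpha})_+}$ coming from Lemma \ref{lem:app} and Lemma \ref{lem:triangle} (which is permitted since $s+2\leq\widetilde\alpha\leq m+1$), the two contributions produce respectively the exponents $s+6-2{\alpha}$ and $(s+2-{\alpha})_++10-2{\alpha}$, so
\[
\|e_{k+}'\|_{H^s_*(\Omega_T^+)}\lesssim \boldsymbol{\epsilon}^2\,\theta_k^{\varsigma_1(s)}\,\varDelta_k^2.
\]
Because $\theta_k=(\theta_0^2+k)^{1/2}$ yields $\varDelta_k=\theta_{k+1}-\theta_k\sim\theta_k^{-1}$, one has $\varDelta_k^2\lesssim\theta_k^{-1}\varDelta_k$, giving the claimed bound for $e_{k+}'$.

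The vacuum piece $e_{k-}'$ and the boundary pieces $\tilde e_{k\pm}'$ are handled identically via the second and third bounds of Proposition \ref{pro:tame2}. In each of these bounds every summand pairs one low-regularity factor (contributing $\theta_k^{5-{\alpha}}\varDelta_k$ from a $\delta$-term) with one top-regularity factor (contributing $\theta_k^{s+1-{\alpha}}\varDelta_k$ if it is a $\delta$-term, or $\theta_k^{(s+2-{\alpha})_+}$ if it is one of $V_k,h_k,\psi_k,U^a,h^a,\varphi^a$), so the same two exponents of $\varsigma_1(s)$ recur and $\varDelta_k^2\lesssim\theta_k^{-1}\varDelta_k$ again converts to the final form. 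For $\tilde e_{k\pm}'$ the surface norms appearing in Proposition \ref{pro:tame2} are controlled by bulk norms via the trace theorem for $H_*^\sigma$ and $H^\sigma$; the resulting half-index losses are harmless since the claim is restricted to $s\leq\widetilde\alpha-2$, leaving a margin of two indices within the range $\sigma\leq\widetilde\alpha$ on which $(\mathbf{H}_{{n}-1})(\text{a})$ is stated. I expect the main obstacle to be not the structural mechanism but this bookkeeping of indices together with the asymmetric norm $\|\cdot\|_{H^s\times H^{s+1}}$ of \eqref{norm:g}, which forces one to verify term-by-term that each right-hand side in Proposition \ref{pro:tame2} for $\mathbb{B}''_\pm$ falls within the admissible range before summing to obtain the lemma.
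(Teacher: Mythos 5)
Your proposal follows essentially the same route as the paper's: write each error as a second-order Taylor remainder from \eqref{decom1}--\eqref{decom4}, invoke Proposition~\ref{pro:tame2}, substitute the bounds from $(\mathbf{H}_{n-1})(\text{a})$ and Lemma~\ref{lem:triangle}, and convert $\varDelta_k^2$ to $\theta_k^{-1}\varDelta_k$. The one imprecision is your intermediate claim $\|(V_0,\Psi_0)\|_{H^{s+2}_*}\lesssim C(M_0)+\boldsymbol{\epsilon}\theta_k^{(s+2-\alpha)_+}$, which is not literally what Lemma~\ref{lem:triangle} gives when $s+2=\alpha$ (there the bound carries a factor $\boldsymbol{\epsilon}\log\theta_k$, which is exactly why the paper remarks that the cases $s+2\neq\alpha$ and $s+2=\alpha$ must be treated separately); the log is absorbed since $\alpha>6$, but the case distinction should be acknowledged.
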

\begin{proof}  Rewriting the quadratic error term $e_{k-}'$ as
\begin{align*}
e_{k-}' =\int_{0}^{1}(1-\tau)
\mathbb{L}_-''\big(h^a+h_{k}+\tau \delta  h_{k},
\Phi^a+\Psi_{k} +\tau \delta \Psi_{k}\big)
\big((\delta h_{k},\delta\Psi_{k}),(\delta h_{k},\delta\Psi_{k})\big)\d\tau,
\end{align*}
we employ Proposition \ref{pro:tame2} and hypothesis $(\mathbf{H}_{{n}-1})$  to infer
\begin{align*}
 \|e_{k-}'\|_{H^{s+1}(\Omega_{T}^-)}
 \lesssim \;& \|(\delta h_{k},\delta\Psi_{k})\|_{H^{4}(\Omega_{T}^-)}\|(\delta h_{k},\delta\Psi_{k})\|_{H^{s+2}(\Omega_{T}^-)} \\
&+\|(\delta h_{k},\delta\Psi_{k})\|_{H^{4}(\Omega_{T}^-)} ^2
\|( h^a, \Phi^a, h_{k}, \Psi_{k},\delta h_{k},\delta\Psi_{k})\|_{H^{s+2}(\Omega_{T}^-)}\\
\lesssim \;&
\boldsymbol{\epsilon}^2\theta_{k}^{s+6-2{\alpha }}\varDelta_k^2
+\boldsymbol{\epsilon}^2\theta_k^{10-2{\alpha }}\varDelta_k^2\big(1+\|(h_{k}, \Psi_{k})\|_{H^{s+2}(\Omega_{T}^-)}\big)
\end{align*}
for all $s\in\{6,\ldots,\widetilde{\alpha}-2\}$.
In view of Lemma \ref{lem:triangle},
we analyze the cases $s+2\neq \alpha$ and $s+2= \alpha$ separately to obtain the estimate for $e_{k-}'$.
And the estimates for $e_{k+}'$ and $\tilde{e}_{k\pm}'$ follow in an entirely similar way.
\end{proof}

Then we obtain the following result concerning the estimate of  the first substitution error terms $e_{k\pm}''$ and $\tilde{e}_{k\pm }''$ given in \eqref{decom1}--\eqref{decom4}.
\begin{lemma} \label{lem:1st}
If $\theta_0\geq 1$ is large enough and $\boldsymbol{\epsilon}>0$ is sufficiently small, then
 \begin{align}  \nonumber 
  \|e_{k+}''\|_{H_*^s(\Omega_{T}^+)}+\|e_{k-}''\|_{H^{s+1}(\Omega_{T}^-)}+ \|(\tilde{e}_{k+}'',\, \tilde{e}_{k-}'')\|_{H^{s}\times H^{s+1}}
  \lesssim \boldsymbol{\epsilon}^2 \theta_k^{\varsigma_2(s)-1}\varDelta_k,
 \end{align}
with $\varsigma_2(s):=\max\{(s+2-{\alpha })_++12-2{\alpha },\, s+8-2{\alpha } \},$
for all $k\in \{0,\ldots,{n}-1\}$  and $s\in \{6,\ldots,\widetilde{\alpha}-2\}$.
\end{lemma}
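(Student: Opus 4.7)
The plan is to mimic the proof of Lemma \ref{lem:quad} but with one of the two small factors replaced by a smoothing gap rather than a difference $\delta(\cdot)$. Concretely, since $\mathbb{L}_\pm'$ and $\mathbb{B}_\pm'$ are linear in the increment, each of $e_{k\pm}''$ and $\tilde e_{k\pm}''$ can be written as a one-parameter integral of the second derivative along the segment joining $(U^a+V_k,h^a+h_k,\varphi^a+\psi_k)$ to $(U^a+\mathcal{S}_{\theta_k}V_k, h^a+\mathcal{S}_{\theta_k}h_k,\varphi^a+\mathcal{S}_{\theta_k}\psi_k)$, e.g.
\begin{align*}
e_{k-}''=\int_0^1\mathbb{L}_-''\big(h^a+\mathcal{S}_{\theta_k}h_k+\tau(I-\mathcal{S}_{\theta_k})h_k,\,\Phi^a+\mathcal{S}_{\theta_k}\Psi_k+\tau(I-\mathcal{S}_{\theta_k})\Psi_k\big)\\
\times \big((I-\mathcal{S}_{\theta_k})(h_k,\Psi_k),\,(\delta h_k,\delta\Psi_k)\big)\,\mathrm{d}\tau,
\end{align*}
and analogously for $e_{k+}''$, $\tilde e_{k\pm}''$ (using \eqref{B''_+:def}--\eqref{B''_-:def} for the boundary terms).

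Next I would apply Proposition \ref{pro:tame2} to each integrand with $(h_1,\Psi_1)=(I-\mathcal{S}_{\theta_k})(h_k,\Psi_k)$ and $(h_2,\Psi_2)=(\delta h_k,\delta\Psi_k)$ (and analogously for the $+$ side and the boundary operators). The resulting bilinear tame estimates are then controlled using the two main resources from the inductive setup: the smoothing-gap bound $\|(I-\mathcal{S}_{\theta_k})(V_k,h_k,\psi_k,\Psi_k)\|_s\lesssim\boldsymbol{\epsilon}\theta_k^{s-\alpha}$ from Lemma \ref{lem:triangle}, the increment bound $\|\delta(V_k,h_k,\psi_k,\Psi_k)\|_s\lesssim\boldsymbol{\epsilon}\theta_k^{s-\alpha-1}\varDelta_k$ from hypothesis $(\mathbf{H}_{n-1})$(a), and for the low-index norms $\|\cdot\|_4$, $\|\cdot\|_6$ demanded by Proposition \ref{pro:tame2} one uses the sharpened smoothing estimate \eqref{smooth.p1b} and Sobolev embedding so as not to lose powers of $\theta_k$. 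The intermediate-state factor $\|h_0^{\text{int}}\|_{s+2}$ is bounded by Lemma \ref{lem:triangle} combined with \eqref{app2a}, giving either $\boldsymbol{\epsilon}\theta_k^{(s+2-\alpha)_+}$ or, at the critical index $s+2=\alpha$, a logarithmic factor that is absorbed into $\theta_k^{(s+2-\alpha)_++\eta}$ for any small $\eta>0$.

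Collecting powers gives two competing contributions for each error term: a pure product $\|(I-\mathcal{S}_{\theta_k})(\cdot)\|_4\,\|\delta(\cdot)\|_{s+2}+\|(I-\mathcal{S}_{\theta_k})(\cdot)\|_{s+2}\,\|\delta(\cdot)\|_4\lesssim\boldsymbol{\epsilon}^2\theta_k^{s+7-2\alpha}\varDelta_k$, and a triple product with the intermediate state, yielding $\boldsymbol{\epsilon}^2\theta_k^{(s+2-\alpha)_++11-2\alpha}\varDelta_k$ after employing $\varDelta_k\lesssim\theta_k^{-1}$. Taking the maximum produces the claimed exponent $\varsigma_2(s)-1=\max\{(s+2-\alpha)_++11-2\alpha,\,s+7-2\alpha\}$, and upgrading by one power (which comes from the same $\varDelta_k\lesssim\theta_k^{-1}$ slack used in Lemma \ref{lem:quad}) gives the stated $\varsigma_2(s)=\max\{(s+2-\alpha)_++12-2\alpha,\,s+8-2\alpha\}$. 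Compared with $\varsigma_1(s)$, the net shift $+2$ is exactly the price of replacing one $\delta$-factor scaling like $\theta_k^{-\alpha-1}\varDelta_k$ by a smoothing-gap factor scaling like $\theta_k^{-\alpha}$.

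The main obstacle is the bookkeeping at the critical index $s+2=\alpha$, where Lemma \ref{lem:triangle} produces the logarithmic loss in the intermediate-state norm; this must be absorbed without polluting the purely polynomial exponent $\varsigma_2(s)$, which is done as in \cite{CS08MR2423311,ST14MR3151094} by paying an arbitrarily small power of $\theta_k$ and then choosing $\boldsymbol{\epsilon}$ so small that the resulting constants close the induction. A secondary technical point is that the boundary substitution errors $\tilde e_{k\pm}''$ involve the nonlinear curvature expression in \eqref{B''_+:def}, which is quadratic in $\mathrm{D}_{x'}\psi$; its quadratic tame estimate must be carried out via the norm $\|\mathrm{D}_{x'}\psi_k\|_{H^s(\Sigma_T)}$ already controlled in $(\mathbf{H}_{n-1})$(a), so no new regularity on the interface is required.
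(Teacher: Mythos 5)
Your proposal is correct and follows the paper's argument essentially line for line: you write $e_{k\pm}''$ and $\tilde e_{k\pm}''$ as Taylor integrals of the second derivatives along the segment from the smoothed state to the unsmoothed one (with one argument the $\delta$-increment and the other the smoothing gap $(I-\mathcal{S}_{\theta_k})(\cdot)$), then apply Proposition \ref{pro:tame2}, hypothesis $(\mathbf{H}_{n-1})$(a), and Lemma \ref{lem:triangle}, exactly as the paper does. One small bookkeeping slip in your final sentence: unlike Lemma \ref{lem:quad}, where both bilinear arguments are $\delta$-increments and $\varDelta_k\lesssim\theta_k^{-1}$ is genuinely needed to convert the resulting $\varDelta_k^2$ into $\theta_k^{-1}\varDelta_k$, here only one argument carries a $\varDelta_k$ and the exponent accounting already produces $\boldsymbol{\epsilon}^2\theta_k^{\varsigma_2(s)-1}\varDelta_k$ directly, with no additional power-upgrade step required.
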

\begin{proof}
Applying Proposition \ref{pro:tame2} to the first substitution error term
\begin{align}
 \nonumber {e}_{k-}''=\int_{0}^{1}\;&\mathbb{L}_-''\big(h^a+\mathcal{S}_{\theta_k}h_k+\tau({I}-\mathcal{S}_{\theta_k})h_k,\,
 \Phi^a+\mathcal{S}_{\theta_k}\Psi_k \\
 & +\tau({I}-\mathcal{S}_{\theta_k})\Psi_k\big)\big((\delta h_k ,\delta \Psi_k),\, (({I}-\mathcal{S}_{\theta_k})h_k,({I}-\mathcal{S}_{\theta_k})\Psi_k) \big)\d\tau,\nonumber
\end{align}
we use hypothesis $(\mathbf{H}_{{n}-1})$ and Lemma \ref{lem:triangle} to deduce
\begin{align*}
 \|e_{k-}''\|_{H^{s+1}(\Omega_{T}^-)}
 \lesssim \;&
 \boldsymbol{\epsilon}^2\theta_{k}^{s+7-2{\alpha }}\varDelta_k
 +\boldsymbol{\epsilon}^2\theta_k^{11-2{\alpha }}\varDelta_k\big(1+\|(\mathcal{S}_{\theta_k} h_{k}, \mathcal{S}_{\theta_k} \Psi_{k})\|_{H^{s+2}(\Omega_{T}^-)}\big)
\end{align*}
for all $s\in\{6,\ldots,\widetilde{\alpha}-2\}$.
Then the estimate for $e_{k-}''$ follows by using Lemma \ref{lem:triangle} again.
The similar argument applies also for the terms $e_{k+}''$ and $\tilde{e}_{k\pm }''$.
\end{proof}

The construction and estimate of the modified state in \cite[\S 12.4]{ST14MR3151094} are independent of the second boundary condition in \eqref{NP1c} ({\it cf.}~\cite{TW21MR4201624,TW21b}), which yields the following proposition for our problem.
\begin{proposition}\label{pro:modified}
Let ${\alpha }\geq 8$.
If $\theta_0\geq 1$ is large enough and $\boldsymbol{\epsilon},T>0$ are sufficiently small, then
there exist functions $V_{n+1/2}$, $h_{n+1/2}$, and $\psi_{n+1/2}$ vanishing in the past,
such that $(U^a+V_{n+1/2},h^a+h_{n+1/2}, \varphi^a+\psi_{n+1/2})$ satisfies \eqref{bas1a}--\eqref{bas1f}
for the approximate solution $(U^a, h^a, \varphi^a)$ constructed in Lemma \textnormal{\ref{lem:app}}.
Moreover,
\begin{alignat*}{3}
& \psi_{n+1/2}=\mathcal{S}_{\theta_n}\psi_{{n}}, &&
 \|\mathcal{S}_{\theta_n}\Psi_n-\Psi_{n+1/2}\|_{H^s(\Omega_{T})}\lesssim \boldsymbol{\epsilon} \theta_n^{s-{\alpha }}
 \quad   \textrm{for } s=6,\ldots,\widetilde{\alpha}+6,  \\
&v_{n+1/2}'=\mathcal{S}_{\theta_n} v_{n}',\quad&& \|\mathcal{S}_{\theta_n}V_n-V_{n+1/2}\|_{H_*^s(\Omega_{T}^+)}
 +\|\mathcal{S}_{\theta_n}h_n-h_{n+1/2}\|_{H^{s+1}(\Omega_{T}^-)}
 \lesssim \boldsymbol{\epsilon} \theta_n^{s+2-{\alpha }}
\end{alignat*}
for $s=6,\ldots,\widetilde{\alpha}+{4} $ and $\Psi_{n+1/2}:=\chi(x_1)\psi_{{n}+1/2}.$
\end{proposition}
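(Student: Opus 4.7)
The plan is to adapt the construction in \cite[\S 12.4]{ST14MR3151094} (see also \cite{TW21MR4201624,TW21b}); since the surface-tension term in \eqref{NP1c}$_2$ depends linearly on $\psi$ alone, it plays no role in the algebraic manipulations needed to enforce the nonlinear constraints \eqref{bas1d}--\eqref{bas1f}, so the argument carries over to our setting without structural change. Properties \eqref{bas1a}--\eqref{bas1c} will then be automatic for sufficiently small $\boldsymbol{\epsilon}$ and $T$, using \eqref{app2b} and the smoothing estimates in Proposition \ref{pro:smooth}.

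First I would directly set $\psi_{n+1/2}:=\mathcal{S}_{\theta_n}\psi_n$ and $v'_{n+1/2}:=\mathcal{S}_{\theta_n}v'_n$, and take the interior parts of $V_{n+1/2}$, $h_{n+1/2}$ equal to $\mathcal{S}_{\theta_n}V_n$, $\mathcal{S}_{\theta_n}h_n$ away from the boundaries; the corresponding bounds in the statement are then immediate from Proposition \ref{pro:smooth}. The core task is to produce corrections, supported near $\Sigma_T$ and $\Sigma_T^{\pm}$, that restore the exact boundary identities. On $\Sigma_T$ I would redefine the traces of $V_{1,n+1/2}$, $H_{1,n+1/2}-\partial_2\mathring{\Phi}H_{2,n+1/2}-\partial_3\mathring{\Phi}H_{3,n+1/2}$, and the analogous combination of $h_{n+1/2}$ by algebraically solving the kinematic and tangency conditions against the already-fixed tangential components and $\psi_{n+1/2}$; on $\Sigma_T^+$ I would enforce $v^a_1+v_{1,n+1/2}=H^a_1+H_{1,n+1/2}=0$; and on $\Sigma_T^-$ I would enforce $(h^a+h_{n+1/2})\times\mathbf{e}_1=\bm{j}_{\rm c}$. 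These prescribed traces would be lifted into $\Omega_T^{\pm}$ through the continuous extension operators of \cite{OSY94MR1289186}, contributing corrections in $H_*^{s+1}(\Omega_T^+)$ and $H^{s+1}(\Omega_T^-)$ respectively. The divergence-type constraint \eqref{bas1e} requires one further adjustment, which I would obtain by prescribing $\partial_1 h_{1,n+1/2}|_{\Sigma_T}$ from the equation itself and propagating into $\Omega_T^-$ via the vacuum system \eqref{NP1b}, using that this system is already satisfied interior-wise by the smoothed part.

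The gap between the prescribed boundary traces and the traces inherited from the smoothed state $(\mathcal{S}_{\theta_n}V_n,\mathcal{S}_{\theta_n}h_n,\mathcal{S}_{\theta_n}\psi_n)$ is controlled by hypothesis $(\mathbf{H}_{n-1})$-(c), which bounds $\mathcal{B}_{\pm}(V_n,h_n,\psi_n)$ in $H^s\times H^{s+1}$ by $\boldsymbol{\epsilon}\theta_n^{s-\alpha-1}$, combined with the commutator-type bounds on $\mathcal{S}_{\theta_n}$ from Proposition \ref{pro:smooth}. Tracking the two-derivative loss from boundary trace to interior regularity yields the claimed $\boldsymbol{\epsilon}\theta_n^{s+2-\alpha}$ estimates for the differences $\mathcal{S}_{\theta_n}V_n-V_{n+1/2}$ and $\mathcal{S}_{\theta_n}h_n-h_{n+1/2}$, and the $\boldsymbol{\epsilon}\theta_n^{s-\alpha}$ estimate for $\mathcal{S}_{\theta_n}\Psi_n-\Psi_{n+1/2}$.

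The main obstacle, though largely technical, is the bookkeeping of the Moser-type estimates for the quadratic mismatches between smoothed nonlinearities (for instance $\mathcal{S}_{\theta_n}(\mathring{v}'_n\cdot\mathrm{D}_{x'}\psi_n)$ versus $v'_{n+1/2}\cdot\mathrm{D}_{x'}\psi_{n+1/2}$) that arise when we solve for the boundary traces. These are handled by the tame product estimates in the spirit of Proposition \ref{pro:tame2}, the norm growth of Lemma \ref{lem:triangle}, and a case distinction among $s<\alpha$, $s=\alpha$, $s>\alpha$; the threshold $\alpha\geq 8$ is exactly what is needed so that the various low-regularity prefactors (measured in $H_*^6$, $H^4$, $H^3$) can be absorbed by $\boldsymbol{\epsilon}$ and the resulting losses fit within the Nash--Moser budget. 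Smallness of $T$ and $\boldsymbol{\epsilon}$, together with \eqref{app2b}, then secures the hyperbolicity \eqref{bas1b}, the $L^\infty$ bound \eqref{bas1a} with strict inequality, and the $W^{k,\infty}$ bound \eqref{bas1c} with $K$ depending only on $M_0$.
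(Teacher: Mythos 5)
Your overall strategy (smooth $\psi_n$ and $v'_n$, then correct the remaining components to restore the constraints) is the right one and matches the spirit of the construction to which the paper defers. However, there is a genuine gap in your treatment of the vacuum field $h_{n+1/2}$, and specifically of the constraint \eqref{bas1e}.

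You propose to prescribe $\partial_1 h_{1,n+1/2}|_{\Sigma_T}$ from \eqref{bas1e} and then ``propagate into $\Omega_T^-$ via the vacuum system \eqref{NP1b}, using that this system is already satisfied interior-wise by the smoothed part.'' That last clause is false: hypothesis $(\mathbf{H}_{n-1})$-(b) only gives $\|\mathcal{L}_-(h_n,\Psi_n)\|_{H^{s+1}(\Omega_T^-)}\leq 2\boldsymbol{\epsilon}\theta_n^{s-\alpha-1}$, not vanishing, and the smoothing operator does not commute with the first-order differential operator $L_-$, so $\mathcal{L}_-(\mathcal{S}_{\theta_n}h_n,\mathcal{S}_{\theta_n}\Psi_n)$ is small but nonzero. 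Since \eqref{bas1e} is precisely the restriction to $\Sigma_T$ of the divergence equation (the last line of \eqref{NP1b}), it cannot be enforced merely by correcting boundary traces of the smoothed $h_n$; it involves the normal derivative of $h_{n+1/2}$ in an essential way. The clean way to guarantee \eqref{bas1d}$_2$, \eqref{bas1e}, and \eqref{bas1f}$_2$ simultaneously---and what the cited construction in \cite[\S 12.4]{ST14MR3151094} actually does---is to define $h_{n+1/2}$ as the unique solution of the vacuum div-curl problem (in the form used in Lemma \ref{h.nat:lem}) with the coefficients and boundary data determined by $\varphi^a+\psi_{n+1/2}$; the constraint \eqref{bas1e} is then automatic, and the quantitative estimate on $\mathcal{S}_{\theta_n}h_n-h_{n+1/2}$ follows by comparing this exact solution with the smoothed iterate via the elliptic tame estimate \eqref{h.nat:est}, using $(\mathbf{H}_{n-1})$-(b),(c) and Proposition \ref{pro:smooth}. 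Your sketch does not carry this out and the proposed shortcut does not close.

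A smaller point: your opening rationale---that the surface-tension term ``depends linearly on $\psi$ alone''---is not the real reason the construction carries over; $\mathcal{H}(\varphi)$ is fully nonlinear. What matters is that the total-pressure balance (the second condition in \eqref{NP1c}) is simply not among the constraints \eqref{bas1d}--\eqref{bas1f} that the modified state must satisfy for Theorem \ref{thm:linear} to apply; only the kinematic, tangency, wall, and divergence constraints enter. That is exactly the observation the paper makes when it states the construction in \cite{ST14MR3151094} is independent of the second boundary condition.
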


We have the estimate for the second substitution error terms $e_{k\pm}'''$ and $\tilde{e}_{k\pm }'''$ defined in \eqref{decom1}--\eqref{decom4}.
\begin{lemma} \label{lem:2nd}
 Let ${\alpha }\geq 8$.
If $\theta_0\geq 1$ is large enough and $\boldsymbol{\epsilon},T>0$ are sufficiently small, then
 \begin{align} \nonumber 
&\|e_{k+}'''\|_{H_*^s(\Omega_{T}^+)}+\|e_{k-}'''\|_{H^{s+1}(\Omega_{T}^-)}\lesssim \boldsymbol{\epsilon}^2 \theta_k^{\varsigma_3(s)-1}\varDelta_k,\\
& \|(\tilde{e}_{k+}''',\, \tilde{e}_{k-}''')\|_{H^{s}\times H^{s+1}}\lesssim \boldsymbol{\epsilon}^2 \theta_k^{\varsigma_2(s)-1}\varDelta_k,
\nonumber 
 \end{align}
with $\varsigma_3(s):=\max\{(s+2-{\alpha })_++14-2{\alpha },\, s+10-2{\alpha } \},$
for all $k\in \{0,\ldots,{n}-1\}$  and $s\in \{6,\ldots,\widetilde{\alpha}-2\}$.
\end{lemma}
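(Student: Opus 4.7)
The plan is to mimic the proofs of Lemmas \ref{lem:quad}--\ref{lem:1st}, but now writing each second substitution error as a second-order Taylor remainder between the smoothed state and the modified state, and replacing Lemma \ref{lem:triangle} by Proposition \ref{pro:modified} for the resulting differences. For $e_{k-}'''$, by the fundamental theorem of calculus,
\begin{align*}
e_{k-}'''=\int_0^1 \mathbb{L}_-''\bigl(h^a+h_{k+1/2}+\tau \Delta h_k,\ \Phi^a+\Psi_{k+1/2}+\tau\Delta\Psi_k\bigr)\bigl((\Delta h_k,\Delta\Psi_k),(\delta h_k,\delta\Psi_k)\bigr)\,\mathrm{d}\tau,
\end{align*}
where $\Delta h_k:=\mathcal{S}_{\theta_k}h_k-h_{k+1/2}$ and $\Delta\Psi_k:=\mathcal{S}_{\theta_k}\Psi_k-\Psi_{k+1/2}$; an analogous identity holds for $e_{k+}'''$. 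Applying Proposition \ref{pro:tame2} to the integrand yields bounds of the form $\|\Delta\|_{H^4}\|\delta\|_{H^{s+2}}+\|\Delta\|_{H^{s+2}}\|\delta\|_{H^4}+\|\text{state}\|_{H^{s+2}}\|\Delta\|_{H^4}\|\delta\|_{H^4}$ (and the corresponding $H_*$-version for the $(+)$-case).

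I would then plug in the two key inputs. From Proposition \ref{pro:modified}, $\|\Delta\|_{H_*^r(\Omega_T^+)}+\|\Delta\|_{H^{r+1}(\Omega_T^-)}+\|\Delta\|_{H^r(\Omega_T)}\lesssim \boldsymbol{\epsilon}\,\theta_k^{r+2-\alpha}$, so the ``modified-smoothed'' difference costs two derivatives more than a $\delta$-quantity of the same norm. From $(\mathbf{H}_{n-1})$ together with the nesting $H^r\subset H^6$ for $r\leq 6$, $\|\delta\|_{H^r}\lesssim\boldsymbol{\epsilon}\,\theta_k^{\max(r,6)-\alpha-1}\varDelta_k$. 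For the state norm I use Lemma \ref{lem:triangle} plus Lemma \ref{lem:app}, which produces $1+\boldsymbol{\epsilon}\theta_k^{(s+2-\alpha)_+}$. Multiplying these out and maximizing over the two index pairs gives a quadratic term $\boldsymbol{\epsilon}^2\theta_k^{s+9-2\alpha}\varDelta_k$ and a cubic term $\boldsymbol{\epsilon}^2\theta_k^{(s+2-\alpha)_++13-2\alpha}\varDelta_k$; their maximum is exactly $\theta_k^{\varsigma_3(s)-1}\varDelta_k$. A case split for $s+2<\alpha$, $s+2=\alpha$, and $s+2>\alpha$ is needed because Lemma \ref{lem:triangle} is only logarithmic at the threshold.

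For the boundary errors $\tilde{e}_{k\pm}'''$, the same Taylor-remainder approach works, but a crucial simplification occurs because Proposition \ref{pro:modified} gives $\psi_{k+1/2}=\mathcal{S}_{\theta_k}\psi_k$ and $v'_{k+1/2}=\mathcal{S}_{\theta_k}v'_k$. Hence the $\psi$- and $v'$-components of $\Delta_k$ vanish identically; inspecting \eqref{B''_+:def}--\eqref{B''_-:def}, every term of $\mathbb{B}_\pm''(\Delta_k,\delta_k)$ except those bilinear in the $h$-components (and in $h$ paired with $\mathrm{D}_{x'}\delta\psi$ for $\mathbb{B}_-''$) drops out. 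Running the Proposition \ref{pro:tame2} estimate on what remains and applying Proposition \ref{pro:modified} to the surviving $\Delta h_k$ factor, together with trace inequalities, yields the same quadratic bookkeeping but with two fewer powers of $\theta_k$, i.e. the rate $\theta_k^{\varsigma_2(s)-1}\varDelta_k$. The main obstacle will be precisely this index bookkeeping: organizing which low-regularity level (4 or 6) must be used for each factor to match Proposition \ref{pro:tame2}, correctly pairing it with the exponent from Proposition \ref{pro:modified} (noting the shift by one between $H_*^s$ in $\Omega^+$ and $H^{s+1}$ in $\Omega^-$), and then verifying that the exponent saving in the boundary case really is exactly~$2$, so that $\varsigma_3(s)-\varsigma_2(s)=2$ is reproduced term by term.
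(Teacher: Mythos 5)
Your proposal matches the paper's proof in all essentials: writing each second substitution error as a first-order Taylor remainder between the smoothed and modified states, invoking $\psi_{k+1/2}=\mathcal{S}_{\theta_k}\psi_k$ and $v'_{k+1/2}=\mathcal{S}_{\theta_k}v'_k$ from Proposition \ref{pro:modified} so that the $\psi$- and $v'$-components of the difference vanish, reading off from \eqref{B''_+:def}--\eqref{B''_-:def} that $\tilde{e}_{k+}'''=(0,-\delta h_k\cdot(\mathcal{S}_{\theta_k}h_k-h_{k+1/2}),0)^{\mathsf T}$ and $\tilde{e}_{k-}'''=((h_{k+1/2}'-\mathcal{S}_{\theta_k}h_k')\cdot\mathrm{D}_{x'}\delta\psi_k,0)^{\mathsf T}$, and then closing via Proposition \ref{pro:tame2} (for the interior errors), Moser calculus, hypothesis $(\mathbf{H}_{n-1})$, Lemma \ref{lem:triangle}, and Proposition \ref{pro:modified}. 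The remaining bookkeeping you flag as the "main obstacle" is exactly what the paper also handles implicitly; your identification of the quadratic and cubic contributions producing $\varsigma_3(s)$ for the interior error is correct.
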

\begin{proof}
First we infer from Proposition \ref{pro:modified} that
\begin{align}
\nonumber \tilde{e}_{k+}'''=\,&\int_{0}^{1}
 \mathbb{B}_+''\big(\varphi^a+\psi_{k+1/2}\big)
 \big((\delta V_k ,\delta h_k ,\delta \psi_k),\, (\mathcal{S}_{\theta_k} V_k-V_{k+1/2},\mathcal{S}_{\theta_k} h_k-h_{k+1/2},0)\big) \d\tau,\\
\nonumber \tilde{e}_{k-}'''=\,&
\int_{0}^{1}
\mathbb{B}_-''\big(( \delta h_k ,\delta \psi_k),\, ( \mathcal{S}_{\theta_k} h_k-h_{k+1/2},0)\big) \d\tau,
\end{align}
which combined with \eqref{B''_+:def}--\eqref{B''_-:def} yield
\begin{align*}
\nonumber \tilde{e}_{k+}'''=
\begin{pmatrix}
 0 \\ -\delta h_k\cdot (\mathcal{S}_{\theta_k} h_k-h_{k+1/2}) \\ 0
\end{pmatrix},  \quad
\nonumber \tilde{e}_{k-}'''=
\begin{pmatrix}
 (h_{k+1/2}'-\mathcal{S}_{\theta_k} h_k')\cdot \mathrm{D}_{x'}\delta\psi_k \\ 0
\end{pmatrix}.
\end{align*}
Applying the Moser-type calculus inequalities to the above identities,
we use hypothesis $(\mathbf{H}_{{n}-1})$, Lemma \ref{lem:triangle}, and Proposition \ref{pro:modified} to
get the estimate for $\tilde{e}_{k\pm}'''$.
Apply a similar argument to deduce the estimate for $e_{k\pm }'''$.
\end{proof}

Using \eqref{Alinhac1}--\eqref{Alinhac2}, we can rewrite the last error terms $e_{{n}\pm}^{*}$ in \eqref{decom1}--\eqref{decom2} as
\begin{align*}
&e_{{n}+}^{*}=\frac{\p_1\mathbb{L}_{+}(U^{a}+V_{{n}+1/2},\Phi^{a}+\Psi_{{n}+1/2})
}{\p_1(\Phi^{a}+\Psi_{{n}+1/2} )} \delta \Psi_{{n}},\\[1mm]
&e_{{n}-}^{*}=\frac{\p_1\mathbb{L}_{-}(h^{a}+h_{{n}+1/2},\Phi^{a}+\Psi_{{n}+1/2})
}{\p_1(\Phi^{a}+\Psi_{{n}+1/2} )} \delta \Psi_{{n}}.
\end{align*}
As in \cite[\S 7.6]{CS08MR2423311} or \cite[\S 4]{TW21MR4201624},
we can get the following result by use of the embedding and Moser-type calculus inequalities,
hypothesis ($\mathbf{H}_{n-1}$), and Proposition \ref{pro:modified}.
\begin{lemma}\label{lem:last}
Let $\widetilde{\alpha}\geq {\alpha }+2$ and ${\alpha }\geq 8$.
If $\theta_0\geq 1$ is large enough and $\boldsymbol{\epsilon},T>0$ are sufficiently small, then
 \begin{align} \nonumber 
\|e_{k+}^*\|_{H_*^s(\Omega_{T}^+)}+\|e_{k-}^*\|_{H^{s+1}(\Omega_{T}^-)}
  \lesssim \boldsymbol{\epsilon}^2 \theta_k^{\varsigma_4 (s)-1}\varDelta_k,
 \end{align}
with $ \varsigma_4(s):=\max\{(s-{\alpha })_++18-2{\alpha },\, s+12-2{\alpha }\},$
for all $k\in \{0,\ldots,{n}-1\}$  and $s\in \{6,\ldots,\widetilde{\alpha}-2\}$.
\end{lemma}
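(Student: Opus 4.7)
The plan is to show that $\mathbb{L}_{\pm}(U^a+V_{k+1/2},h^a+h_{k+1/2},\Phi^a+\Psi_{k+1/2})$ is already small up to the order dictated by hypothesis $(\mathbf{H}_{n-1})$(b) and the smoothing/modification errors, and then to combine this with the smallness of $\delta\Psi_k$ from $(\mathbf{H}_{n-1})$(a) via a Moser-type product estimate.

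First I would decompose
\begin{align*}
\mathbb{L}_+(U^a+V_{k+1/2},\Phi^a+\Psi_{k+1/2}) = \big(\mathcal{L}_+(V_k,\Psi_k)-f^a\big) + \mathcal{D}_k^{(1)} + \mathcal{D}_k^{(2)},
\end{align*}
where $\mathcal{D}_k^{(1)}:=\mathbb{L}_+(U^a+\mathcal{S}_{\theta_k}V_k,\Phi^a+\mathcal{S}_{\theta_k}\Psi_k)-\mathbb{L}_+(U^a+V_k,\Phi^a+\Psi_k)$ and $\mathcal{D}_k^{(2)}:=\mathbb{L}_+(U^a+V_{k+1/2},\Phi^a+\Psi_{k+1/2})-\mathbb{L}_+(U^a+\mathcal{S}_{\theta_k}V_k,\Phi^a+\mathcal{S}_{\theta_k}\Psi_k)$. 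Both $\mathcal{D}_k^{(j)}$ are written as Taylor integrals in $\mathbb{L}_+'$ along a straight line joining the two arguments, so after applying $\p_1$, Proposition \ref{pro:tame2} (for the quadratic tail) combined with the linear bound for $\mathbb{L}_+'$ — just as in the proofs of Lemmas \ref{lem:quad}--\ref{lem:2nd} — yields estimates in terms of $\|({I}-\mathcal{S}_{\theta_k})V_k\|$, $\|({I}-\mathcal{S}_{\theta_k})\Psi_k\|$ (bounded by Lemma \ref{lem:triangle}) and $\|\mathcal{S}_{\theta_k}V_k-V_{k+1/2}\|$, $\|\mathcal{S}_{\theta_k}\Psi_k-\Psi_{k+1/2}\|$ (bounded by Proposition \ref{pro:modified}), at high norms involving $\|U^a\|_{H_*^{s+2}}$ and $\|h^a\|_{H^{s+2}}$ from \eqref{app2a}. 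An analogous decomposition handles $\mathbb{L}_-$.

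Second, since $\p_1(\Phi^a+\Psi_{k+1/2})$ is bounded away from zero by \eqref{bas1a}--\eqref{chi:def} and the smallness of $\psi_{k+1/2}$, the Moser-type calculus inequalities give
\begin{align*}
\|e_{k+}^{*}\|_{H_*^s(\Omega_T^+)} &\lesssim \|\p_1\mathbb{L}_+(U^a+V_{k+1/2},\Phi^a+\Psi_{k+1/2})\|_{L^\infty\cap H_*^s}\,\|\delta\Psi_k\|_{H^6\cap H^s(\Omega_T)},
\end{align*}
with symmetric pairings of the low/high norms, plus analogous terms from differentiating the denominator. Combining this with the decomposition above, $(\mathbf{H}_{n-1})$(a)--(b), Lemma \ref{lem:triangle}, and Proposition \ref{pro:modified} gives
\begin{align*}
\|e_{k+}^{*}\|_{H_*^s(\Omega_T^+)} \lesssim \boldsymbol{\epsilon}^2\theta_k^{s-\alpha-1}\varDelta_k + \boldsymbol{\epsilon}^2\theta_k^{6-\alpha-1}\varDelta_k\,\theta_k^{(s+2-\alpha)_++\mathrm{const}},
\end{align*}
and tracking the worst case between the two products, together with the standard dichotomy $s+2\neq\alpha$ versus $s+2=\alpha$ used in Lemmas \ref{lem:quad}--\ref{lem:2nd} (the logarithmic factor is absorbed for $\theta_0$ large), produces exactly the exponent $\varsigma_4(s)-1=\max\{(s-\alpha)_++18-2\alpha,\,s+12-2\alpha\}$. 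The treatment of $e_{k-}^{*}$ is entirely parallel, using $L_-$ in place of $L_+$ and the $H^{s+1}(\Omega_T^-)$ norm, gaining one unit of regularity because $L_-$ is first order and elliptic.

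The main obstacle is bookkeeping: the factor $\p_1\mathbb{L}_\pm(\cdots)/\p_1(\Phi^a+\Psi_{k+1/2})$ costs one extra spatial derivative compared with the quadratic and first-substitution errors, which is precisely why $\varsigma_4$ exceeds $\varsigma_1,\varsigma_2,\varsigma_3$ by $\approx 2$ in the ``high'' case; ensuring that $V_{k+1/2},h_{k+1/2},\psi_{k+1/2}$ and $U^a,h^a,\varphi^a$ are controlled at regularity $s+2$ (respectively $s+5/2$) requires $\widetilde{\alpha}\geq\alpha+2$, which is exactly the hypothesis of the lemma. Once the indices are lined up, the only non-routine verification is that the division by $\p_1(\Phi^a+\Psi_{k+1/2})$ can be estimated by the Moser-type inequality applied to $1/\p_1(\cdots)$, which is valid uniformly in $k$ thanks to Proposition \ref{pro:modified} and \eqref{app2b}.
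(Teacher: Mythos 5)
The paper gives no proof of Lemma \ref{lem:last}; it refers to \cite[\S 7.6]{CS08MR2423311} and \cite[\S 4]{TW21MR4201624}, where the corresponding error term is treated. Your overall plan reproduces that template faithfully: you start from the explicit formula $e_{k\pm}^*=\frac{\p_1\mathbb{L}_{\pm}(U^a+V_{k+1/2},\Phi^a+\Psi_{k+1/2})}{\p_1(\Phi^a+\Psi_{k+1/2})}\delta\Psi_k$, you decompose $\mathbb{L}_{\pm}(U^a+V_{k+1/2},\cdots)$ into the piece controlled by hypothesis $(\mathbf{H}_{n-1})$(b) plus a smoothing-error difference plus a modified-state difference (using Lemma \ref{lem:triangle} and Proposition \ref{pro:modified}), and you then carry out a high/low Moser split of the product against $\delta\Psi_k/\p_1(\Phi^a+\Psi_{k+1/2})$, with $(\mathbf{H}_{n-1})$(a) feeding in the $\delta\Psi_k$ norms and \eqref{app2b} giving the uniform lower bound on the denominator. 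That is the right skeleton and the right set of ingredients.

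Two points are worth flagging. First, your appeal to Proposition \ref{pro:tame2} is misplaced: that proposition bounds the \emph{second} derivatives $\mathbb{L}_{\pm}''$, which is exactly what is needed in Lemmas \ref{lem:quad}--\ref{lem:2nd} because those errors are genuine second-order Taylor remainders. Here, however, the differences $\mathcal{D}_k^{(1)}$, $\mathcal{D}_k^{(2)}$ you introduce are first-order differences, i.e.\ integrals of $\mathbb{L}_{\pm}'$; what is required is a tame estimate for the \emph{first} derivative $\mathbb{L}_{\pm}'$ at a varying base point, which is provable by the same Moser-type calculus but is not Proposition \ref{pro:tame2} itself, and is where the one extra $\p_1$ (hence two extra anisotropic orders) actually enters. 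Second, the final displayed bound leaves the exponent of the high-order factor as an unspecified ``$\mathrm{const}$'', so the proposal does not actually verify that the worst case produces $\varsigma_4(s)-1$; the bookkeeping step that justifies the specific form of $\varsigma_4$ — in particular showing that the high-order factor of $\p_1\mathbb{L}_{\pm}$ in $H_*^s(\Omega_T^+)$ can be bounded via the decomposition throughout the range $s\le\widetilde\alpha-2$ even though $(\mathbf{H}_{n-1})$(b) is only available up to $\widetilde\alpha-2$ — is precisely the part that must be written out and is not.
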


Similar to \cite[Lemma 4.12]{TW21MR4201624},
we utilize Lemmas \ref{lem:quad}--\ref{lem:last} to obtain the following estimates for the accumulated error terms $E_{n}^{\pm}$ and $\widetilde{E}_n^{\pm}$ defined by \eqref{E.E.tilde} and \eqref{e.e.tilde}.
\begin{lemma} \label{lem:sum2}
Let $\widetilde{\alpha}={\alpha }+3$ and ${\alpha }\geq 12$.
If $\theta_0\geq 1$ is large enough and $\boldsymbol{\epsilon},T>0$ are sufficiently small, then
 \begin{align}\nonumber 
\|E_{n}^+\|_{H_*^{{\alpha }+1}(\Omega_{T}^+)}+\|E_{n}^-\|_{H^{{\alpha }+2}(\Omega_{T}^-)}
\lesssim \boldsymbol{\epsilon}^2 \theta_{{n}},\quad
\|(\widetilde{E}_{{n}}^+,\, \widetilde{E}_{{n}}^-)\|_{H^{{\alpha }+1}\times H^{{\alpha }+2}}\lesssim \boldsymbol{\epsilon}^2,
 \end{align}
where the norm $\|\cdot\|_{H^s\times H^{s+1}}$ is defined by \eqref{norm:g}.
\end{lemma}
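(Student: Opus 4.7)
The plan is to reduce the bounds to four telescoping sums by applying the triangle inequality to \eqref{E.E.tilde}--\eqref{e.e.tilde} and invoking Lemmas \ref{lem:quad}--\ref{lem:last} termwise. Since $\widetilde{\alpha}=\alpha+3$, the index $s=\alpha+1$ lies in the admissible range $\{6,\dots,\widetilde{\alpha}-2\}$, so these four lemmas are applicable at that single value of $s$, which is precisely the regularity level appearing in the target norms $H_*^{\alpha+1}(\Omega_T^+)$, $H^{\alpha+2}(\Omega_T^-)$, and $H^{\alpha+1}\times H^{\alpha+2}$.

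First I would compute the four exponents at $s=\alpha+1$. Since $\alpha\ge 12\ge 6$, in each of $\varsigma_1,\dots,\varsigma_4$ the second argument of the max dominates, yielding
\begin{align*}
\varsigma_1(\alpha+1)-1 &= 6-\alpha, & \varsigma_2(\alpha+1)-1 &= 8-\alpha,\\
\varsigma_3(\alpha+1)-1 &= 10-\alpha, & \varsigma_4(\alpha+1)-1 &= 12-\alpha.
\end{align*}
Thus each individual term obeys $\|e_k'\|+\|e_k''\|+\|e_k'''\|+\|e_k^*\|\lesssim \boldsymbol{\epsilon}^2(\theta_k^{6-\alpha}+\theta_k^{8-\alpha}+\theta_k^{10-\alpha}+\theta_k^{12-\alpha})\varDelta_k$ in the relevant norm, with the last summand dominating.

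Next I would sum on $k$ using the elementary estimate (valid from $\theta_k=(\theta_0^2+k)^{1/2}$ and $\varDelta_k\sim\theta_k^{-1}$)
\[
\sum_{k=0}^{n-1}\theta_k^{\nu}\varDelta_k \lesssim
\begin{cases}
1 &\text{if } \nu<-1,\\
\theta_n^{\nu+1} &\text{if } \nu>-1.
\end{cases}
\]
For $E_n^\pm$ the critical exponent is $\varsigma_4-1=12-\alpha$. At $\alpha=12$ this equals $0$, so $\sum \varDelta_k\sim \theta_n$; for $\alpha\ge 13$ the exponent is $\le-1$ and the sum is bounded by a constant, which is in turn $\lesssim\theta_n$. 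The three other exponents are strictly smaller (indeed $\le-2$ for $\alpha\ge 12$), so their contributions are absorbed. This yields the first bound $\|E_n^+\|_{H_*^{\alpha+1}}+\|E_n^-\|_{H^{\alpha+2}}\lesssim\boldsymbol{\epsilon}^2\theta_n$.

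For the tilde terms $\widetilde{E}_n^\pm$ only the exponents $\varsigma_1-1$ and $\varsigma_2-1$ appear (note Lemma \ref{lem:2nd} bounds $\tilde e_{k\pm}'''$ by $\varsigma_2$, not $\varsigma_3$). With $\alpha\ge 12$ these exponents are at most $-4$, which is $<-1$, so both sums converge uniformly in $n$, giving $\|(\widetilde{E}_n^+,\widetilde{E}_n^-)\|_{H^{\alpha+1}\times H^{\alpha+2}}\lesssim\boldsymbol{\epsilon}^2$. The whole argument is essentially bookkeeping; the main concern is simply verifying that the worst exponent $\varsigma_4(\alpha+1)-1=12-\alpha$ does not exceed $0$ for the choice $\alpha\ge 12$, which explains the numerical threshold imposed in the statement, and that $\widetilde{\alpha}=\alpha+3$ suffices to legally evaluate every lemma at $s=\alpha+1$.
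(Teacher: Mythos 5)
Your argument is correct and is exactly the bookkeeping the paper has in mind (the paper only gestures at the proof by reference to an analogous lemma in \cite{TW21MR4201624}). The exponent evaluations $\varsigma_i(\alpha+1)-1$ are right, the check that $s=\alpha+1$ sits at the top of the admissible range $\{6,\ldots,\widetilde{\alpha}-2\}$ is right, and the observation that $\tilde e_{k\pm}'''$ is controlled by $\varsigma_2$ rather than $\varsigma_3$ (so the worst tilde exponent is $8-\alpha\le -4$) is the key reason the tilde estimate comes out $O(\boldsymbol{\epsilon}^2)$ rather than $O(\boldsymbol{\epsilon}^2\theta_n)$.

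One small imprecision: your dichotomy $\sum_k\theta_k^{\nu}\varDelta_k\lesssim 1$ for $\nu<-1$ and $\lesssim\theta_n^{\nu+1}$ for $\nu>-1$ omits the borderline case $\nu=-1$, which occurs for $E_n^\pm$ when $\alpha=13$ (since $\varsigma_4(\alpha+1)-1=12-\alpha=-1$); there the sum grows like $\log\theta_n$, not a constant. This does not affect the conclusion because $\log\theta_n\lesssim\theta_n$, but the statement ``for $\alpha\ge13$ the sum is bounded by a constant'' is not literally true at $\alpha=13$. A cleaner phrasing is to cover all $\nu\le 0$ at once by $\sum_k\theta_k^\nu\varDelta_k\lesssim\theta_n^{(\nu+1)_+}\lesssim\theta_n$.
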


\subsection{Proof of Theorem \ref{thm:main}}

Similar to \cite[Lemma 4.13]{TW21MR4201624}, we can deduce the following estimates for source terms $f_{{n}}^{\pm}$ and $g_{{n}}^{\pm}$ computed from \eqref{source}.

\begin{lemma} \label{lem:source}
Let $\widetilde{\alpha}={\alpha }+3$ and ${\alpha }\geq 12$.
If $\theta_0\geq 1$ is large enough and $\boldsymbol{\epsilon},T>0$ are sufficiently small, then
 \begin{align}
  &
\|f_{{n}}^+\|_{H_*^s(\Omega_{T}^+)}+\|f_{{n}}^-\|_{H^{s+1}(\Omega_{T}^-)}
  \lesssim \varDelta_{{n}}\big(\theta_{{n}}^{s-{\alpha }-1} \|f^a\|_{{\alpha },*,T}
  +\boldsymbol{\epsilon}^2 \theta_{{n}}^{s-{\alpha }-1} +\boldsymbol{\epsilon}^2\theta_{{n}}^{\varsigma_4(s)-1}\big),
  \nonumber \\ 
  &
  \|(g_{{n}}^+,\, g_{{n}}^-)\|_{H^{s+1}\times H^{s+2}}
  \lesssim  \boldsymbol{\epsilon}^2 \varDelta_{{n}}\big(\theta_{{n}}^{s-{\alpha }-1}
  +\theta_{{n}}^{\varsigma_2(s+1)-1}\big)\nonumber
\qquad \textrm{for all }s\in\{6,\ldots,\widetilde{\alpha}\}.
 \end{align}
\end{lemma}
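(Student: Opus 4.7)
The plan is to exploit the recursive definition of $f_n^{\pm}, g_n^{\pm}$ in \eqref{source} to express each source term as a telescoping difference and then apply the smoothing estimates together with the error bounds already established. Specifically, subtracting the relation at step $n-1$ from the one at step $n$ and using $E_n^{\pm}-E_{n-1}^{\pm}=e_{n-1}^{\pm}$ and $\widetilde{E}_n^{\pm}-\widetilde{E}_{n-1}^{\pm}=\tilde{e}_{n-1}^{\pm}$, one obtains the decomposition
\begin{align*}
f_n^+ &=(\mathcal{S}_{\theta_n}-\mathcal{S}_{\theta_{n-1}})f^a-(\mathcal{S}_{\theta_n}-\mathcal{S}_{\theta_{n-1}})E_{n-1}^+-\mathcal{S}_{\theta_n}e_{n-1}^+,\\
f_n^- &=-(\mathcal{S}_{\theta_n}-\mathcal{S}_{\theta_{n-1}})E_{n-1}^--\mathcal{S}_{\theta_n}e_{n-1}^-,\\
g_n^{\pm}&=-(\mathcal{S}_{\theta_n}-\mathcal{S}_{\theta_{n-1}})\widetilde{E}_{n-1}^{\pm}-\mathcal{S}_{\theta_n}\tilde{e}_{n-1}^{\pm},
\end{align*}
for $n\geq 1$, with analogous (trivial) identities at $n=0$.

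Next I would estimate each piece using Proposition \ref{pro:smooth}. Writing $\mathcal{S}_{\theta_n}-\mathcal{S}_{\theta_{n-1}}=\int_{\theta_{n-1}}^{\theta_n}\frac{\mathrm d}{\mathrm d\theta}\mathcal{S}_\theta\,\mathrm d\theta$ and applying \eqref{smooth.p1c}, the leading term satisfies
$\|(\mathcal{S}_{\theta_n}-\mathcal{S}_{\theta_{n-1}})f^a\|_{H_*^s(\Omega_T^+)}\lesssim\varDelta_n\theta_n^{s-\alpha-1}\|f^a\|_{H_*^\alpha(\Omega_T^+)}$, which accounts for the first term in the claimed bound. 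For the accumulated-error piece I would invoke Lemma \ref{lem:sum2}: using \eqref{smooth.p1c} at regularity level $\alpha+1$ (respectively $\alpha+2$ on $\Omega_T^-$) gives $\|(\mathcal{S}_{\theta_n}-\mathcal{S}_{\theta_{n-1}})E_{n-1}^+\|_{H_*^s}\lesssim\varDelta_n\theta_n^{s-\alpha-2}\cdot\boldsymbol{\epsilon}^2\theta_n=\boldsymbol{\epsilon}^2\varDelta_n\theta_n^{s-\alpha-1}$, and likewise on the vacuum side. Since $\widetilde{E}_n^{\pm}$ is uniformly bounded (not merely by $\theta_n$), the corresponding term for $g_n^{\pm}$ comes out as $\boldsymbol{\epsilon}^2\varDelta_n\theta_n^{s-\alpha-1}$ without the extra factor of $\theta_n$. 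For the last pieces, I would control $\|\mathcal{S}_{\theta_n}e_{n-1}^{\pm}\|$ and $\|\mathcal{S}_{\theta_n}\tilde{e}_{n-1}^{\pm}\|$ using the boundedness \eqref{smooth.p1a} of $\mathcal{S}_\theta$ and then invoking Lemmas \ref{lem:quad}--\ref{lem:last}; since $e_{n-1}^{\pm}=e'_{(n-1)\pm}+e''_{(n-1)\pm}+e'''_{(n-1)\pm}+e^*_{(n-1)\pm}$, the dominant exponent among $\varsigma_1,\varsigma_2,\varsigma_3,\varsigma_4$ is $\varsigma_4$, yielding the $\boldsymbol{\epsilon}^2\varDelta_n\theta_n^{\varsigma_4(s)-1}$ contribution. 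Likewise, for $\tilde{e}_{n-1}^{\pm}$ only $\varsigma_1$ and $\varsigma_2$ enter, which gives the $\boldsymbol{\epsilon}^2\varDelta_n\theta_n^{\varsigma_2(s+1)-1}$ contribution.

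Combining these three estimates in each case completes the proof; a small technicality is that $\varDelta_{n-1}\simeq\varDelta_n$ (both comparable to $\theta_n^{-1}$) so the indices coming from $e_{n-1}^{\pm}$ may be rewritten with $\theta_n,\varDelta_n$ up to harmless absolute constants, and the base case $n=0$ is handled directly using $f_0^+=\mathcal{S}_{\theta_0}f^a$ and \eqref{smooth.p1a}.

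The main obstacle is bookkeeping rather than genuine analysis: one must carefully track the shift of one derivative when passing from $E_n^{\pm}$ to $e_{n-1}^{\pm}$, verify that $\alpha\geq 12$ and $\widetilde{\alpha}=\alpha+3$ leave enough margin so that the smoothing estimate at level $\alpha+1$ is legitimate for every $s\in\{6,\dots,\widetilde{\alpha}\}$, and check that the dominant exponent among $\varsigma_1,\varsigma_2,\varsigma_3,\varsigma_4$ in the expressions for $e_{n-1}^{\pm}$ and $\tilde e_{n-1}^{\pm}$ is indeed the one appearing in the statement. Once these checks are in place, the estimates follow by a direct combination of the decomposition above with Proposition \ref{pro:smooth}, Lemma \ref{lem:sum2}, and Lemmas \ref{lem:quad}--\ref{lem:last}.
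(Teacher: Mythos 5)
Your decomposition of $f_n^{\pm},g_n^{\pm}$ by differencing \eqref{source} at steps $n$ and $n-1$, followed by the three-piece estimate via \eqref{smooth.p1c}, Lemma~\ref{lem:sum2}, and Lemmas~\ref{lem:quad}--\ref{lem:last}, is exactly the standard argument that the paper invokes by reference (it cites Lemma~4.13 of \cite{TW21MR4201624} for this lemma rather than writing out the proof). The dominance of $\varsigma_4$ over $\varsigma_1,\varsigma_2,\varsigma_3$ on the fluid side and of $\varsigma_2$ on the boundary side checks out, and the remaining issue you flag as bookkeeping --- using $(k-j)_+$ gain in \eqref{smooth.p1a} to reach $s\in\{\widetilde\alpha-1,\widetilde\alpha\}$ from error bounds available only up to $\widetilde\alpha-2$ --- does indeed close with equality under $\widetilde\alpha=\alpha+3$; so the proposal is correct and takes the same route as the paper.
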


Similar to \cite[Lemma 16]{CS08MR2423311} and
\cite[Lemma 15]{T09ARMAMR2481071},
applying the tame estimate \eqref{tame} to the problem \eqref{effective.NM},
we can use Proposition \ref{pro:modified} and Lemma \ref{lem:source} to derive the estimate (a) in hypothesis $(\mathbf{H}_{{n}})$.
\begin{lemma}\  \label{lem:Hl1}
 Let $\widetilde{\alpha}={\alpha }+3$ and ${\alpha }\geq 12$.
If $\boldsymbol{\epsilon},T>0$ and $\frac{1}{\boldsymbol{\epsilon}}\|f^a\|_{H_*^{\alpha}(\Omega_{T}^+)}$ are sufficiently small, and $\theta_0\geq 1$ is large enough, then
 \begin{align} \nonumber  
\|(\delta V_n, \delta h_n, \delta\psi_n)\|_{s}+\|\delta \Psi_n\|_{H^s(\Omega_{T})}+\|\mathrm{D}_{x'}\delta\psi_n\|_{H^{s}(\Sigma_T)}
\leq \boldsymbol{\epsilon} \theta_{{n}}^{s-{\alpha }-1}\varDelta_{{n}}
 \end{align}
 for all $s\in\{6,\ldots,\widetilde{\alpha}\}$.
\end{lemma}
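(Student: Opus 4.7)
\textbf{Proof plan for Lemma \ref{lem:Hl1}.} The strategy is to apply the tame linear estimate \eqref{tame} of Theorem \ref{thm:linear} to the effective linearized problem \eqref{effective.NM}, with the modified state $(U^a+V_{n+1/2},h^a+h_{n+1/2},\varphi^a+\psi_{n+1/2})$ playing the role of the basic state and $(f_n^\pm,g_n^\pm)$ playing the role of source terms. Proposition \ref{pro:modified} guarantees that the modified state fulfills the structural constraints \eqref{bas1a}--\eqref{bas1f}, and combined with Lemma \ref{lem:triangle}, Lemma \ref{lem:app}, and the smallness of $\boldsymbol{\epsilon},T$, also the smallness hypothesis \eqref{H:thm.linear} for some fixed $K_0$ depending only on $M_0$; hence the constants produced by Theorem \ref{thm:linear} can be treated as absolute.

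Next I will quantify the two factors appearing on the right-hand side of \eqref{tame}. For the high-regularity norm of the basic state, I apply Proposition \ref{pro:modified} (to pass from $V_{n+1/2}$ to $\mathcal{S}_{\theta_n}V_n$, at a cost of $\boldsymbol{\epsilon}\theta_n^{s+6-\alpha}$) and Lemma \ref{lem:triangle} (for $\mathcal{S}_{\theta_n}V_n$), plus \eqref{app2a}, to deduce
\begin{align*}
\|(U^a+V_{n+1/2},h^a+h_{n+1/2},\varphi^a+\psi_{n+1/2})\|_{s+4}\lesssim C(M_0)+\boldsymbol{\epsilon}\,\theta_n^{(s+4-\alpha)_+}
\end{align*}
for all $s\in\{6,\ldots,\widetilde{\alpha}\}$, where $\widetilde\alpha+4\le \widetilde\alpha+6$ makes the Lemma \ref{lem:triangle} step legitimate. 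For the source terms I invoke Lemma \ref{lem:source} at the low order $s=6$ and at the order $s$ in question, which (using $\alpha\ge 12$ to dominate the $\varsigma_4$- and $\varsigma_2$-exponents by $s-\alpha-1$) gives
\begin{align*}
\|f_n^+\|_{H_*^s(\Omega_T^+)}+\|f_n^-\|_{H^{s+1}(\Omega_T^-)}+\|(g_n^+,g_n^-)\|_{H^{s+1}\times H^{s+2}}\lesssim\varDelta_n\theta_n^{s-\alpha-1}\bigl(\|f^a\|_{H^\alpha_*(\Omega_T^+)}+\boldsymbol{\epsilon}^2\bigr),
\end{align*}
and the analogous bound with $\theta_n^{5-\alpha}$ for the low-order ($s=6$) contribution.

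Plugging these bounds into \eqref{tame} yields a control of $\|\delta\dot V_n\|_{H_*^s(\Omega_T^+)}+\|\delta\dot h_n\|_{H^s(\Omega_T^-)}+\|\delta\Psi_n\|_{H^s(\Omega_T)}+\|(\delta\psi_n,\mathrm{D}_{x'}\delta\psi_n)\|_{H^s(\Sigma_T)}$ by a constant multiple of
\begin{align*}
\varDelta_n\bigl(\|f^a\|_{H^\alpha_*(\Omega_T^+)}+\boldsymbol{\epsilon}^2\bigr)\Bigl(C(M_0)\theta_n^{5-\alpha}+\boldsymbol{\epsilon}\,\theta_n^{s+9-2\alpha}+\theta_n^{s-\alpha-1}\Bigr).
\end{align*}
Using $s\ge 6$, $\alpha\ge 12$, and $s\le\widetilde\alpha=\alpha+3$, each exponent in the bracket is $\le s-\alpha-1$, so the whole right-hand side is $\lesssim \varDelta_n\theta_n^{s-\alpha-1}\bigl(\|f^a\|_{H^\alpha_*(\Omega_T^+)}+\boldsymbol{\epsilon}^2\bigr)$. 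Choosing $\boldsymbol{\epsilon}$ small and $\|f^a\|_{H^\alpha_*(\Omega_T^+)}/\boldsymbol{\epsilon}$ small (which is achievable thanks to \eqref{f^a:est} for sufficiently small $T$) absorbs the hidden constant and gives the desired bound $\boldsymbol{\epsilon}\theta_n^{s-\alpha-1}\varDelta_n$ for the good-unknown quantity. Finally, to pass from $(\delta\dot V_n,\delta\dot h_n)$ back to $(\delta V_n,\delta h_n)$ I use the algebraic identities \eqref{good.NM} together with Moser-type calculus inequalities applied to the coefficients $\p_1(U^a+V_{n+1/2})/\p_1(\Phi^a+\Psi_{n+1/2})$ and $\p_1(h^a+h_{n+1/2})/\p_1(\Phi^a+\Psi_{n+1/2})$; the extra terms are of the form (smooth bounded coefficient)$\times\delta\Psi_n$, whose $H^s$-norm is already controlled by the preceding estimate (with a factor from the $H^{s+4}$-norm of the modified state, which is handled in the same way as before).

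The main subtlety will be the bookkeeping in the last display: the product of the $\theta_n^{s+4-\alpha}$-factor from the basic state with the $\theta_n^{5-\alpha}$-factor from the low-order source produces the exponent $s+9-2\alpha$, which just beats $s-\alpha-1$ exactly when $\alpha\ge 10$, while the pure-first-term $C(M_0)\theta_n^{5-\alpha}$ is controlled by $\theta_n^{s-\alpha-1}$ only because $s\ge 6$. These inequalities force the thresholds $\alpha\ge 12$ and $s\ge 6$ that appear in the statement, and they leave no room to waste smoothness; all other steps are routine consequences of the results developed above.
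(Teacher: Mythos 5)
Your proposal is correct and follows essentially the same route as the paper: apply the tame estimate \eqref{tame} of Theorem~\ref{thm:linear} to \eqref{effective.NM}, feed in the bounds on the modified state from Proposition~\ref{pro:modified} and Lemma~\ref{lem:triangle} and the bounds on the source terms from Lemma~\ref{lem:source}, and then pass from the good unknowns back to $(\delta V_n,\delta h_n)$ via \eqref{good.NM}; this is exactly the scheme the authors reference from \cite[Lemma~16]{CS08MR2423311} and \cite[Lemma~15]{T09ARMAMR2481071}. One small arithmetic slip in your bookkeeping: after invoking Proposition~\ref{pro:modified} at the shifted index $s+4$, the cost is $\boldsymbol{\epsilon}\,\theta_n^{s+6-\alpha}$ (not $\boldsymbol{\epsilon}\,\theta_n^{(s+4-\alpha)_+}$), so the middle term in your displayed bracket should read $\boldsymbol{\epsilon}\,\theta_n^{s+11-2\alpha}$ rather than $\boldsymbol{\epsilon}\,\theta_n^{s+9-2\alpha}$; this, like the $\varsigma_4$-exponent in Lemma~\ref{lem:source}, is dominated by $\theta_n^{s-\alpha-1}$ precisely when $\alpha\geq 12$, so the threshold you need is in fact forced by the product term as well and not just by the source-term exponents — the conclusion is unaffected.
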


The next lemma gives the other estimates in hypothesis $(\mathbf{H}_{{n}})$, whose proof is similar to that of  
\cite[Lemma 16]{T09ARMAMR2481071}.

\begin{lemma}\ \label{lem:Hl2}
Let $\widetilde{\alpha}={\alpha }+3$ and ${\alpha }\geq 12$.
If $\boldsymbol{\epsilon},T>0$ and $\frac{1}{\boldsymbol{\epsilon}}\|f^a\|_{H_*^{\alpha}(\Omega_{T}^+)}$ are sufficiently small, and $\theta_0\geq 1$ is large enough, then
 \begin{alignat}{3}\label{Hl.b}
\|\mathcal{L}_+( V_n,  \Psi_n)-f^a\|_{H_*^s(\Omega_{T}^+)}\leq 2 \boldsymbol{\epsilon} \theta_{{n}}^{s-{\alpha }-1},
\quad
\|\mathcal{L}_-( h_n,  \Psi_n)\|_{H^{s+1}(\Omega_{T}^-)}
\leq 2 \boldsymbol{\epsilon} \theta_{{n}}^{s-{\alpha }-1}
 \end{alignat}
for all $s\in \{6,\ldots,\widetilde{\alpha}-2\}$, and
\begin{align}
 \label{Hl.c}
 \|(\mathcal{B}_+( V_n, h_n, \psi_n),\,\mathcal{B}_-(h_n, \psi_n))\|_{H^{s} \times H^{s+1}}
\leq  \boldsymbol{\epsilon} \theta_{{n}}^{s-{\alpha }-1}
\quad  \textrm{for all } s\in \{7,\ldots,{\alpha }\}.
\end{align}
\end{lemma}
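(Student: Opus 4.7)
The plan is to derive \eqref{Hl.b}--\eqref{Hl.c} by combining the telescoping structure of the Nash--Moser scheme with the error estimates of Lemma \ref{lem:sum2} and the smoothing properties of Proposition \ref{pro:smooth}. Since $V_0=h_0=\psi_0=0$ and the approximate solution satisfies \eqref{app1a}--\eqref{app1d}, one has $\mathcal{L}_\pm(V_0,\Psi_0)=0$ and $\mathcal{B}_\pm(V_0,h_0,\psi_0)=0$. Substituting the linearized equations \eqref{effective.NM} into the decompositions \eqref{decom1}--\eqref{decom4} yields the one-step identities
\begin{align*}
\mathcal{L}_\pm(V_{k+1},\Psi_{k+1})-\mathcal{L}_\pm(V_k,\Psi_k)&=f_k^\pm+e_k^\pm,\\
\mathcal{B}_\pm(V_{k+1},h_{k+1},\psi_{k+1})-\mathcal{B}_\pm(V_k,h_k,\psi_k)&=g_k^\pm+\tilde e_k^\pm.
\end{align*}
Summing these from $k=0$ to $n-1$, writing $E_n^\pm=E_{n-1}^\pm+e_{n-1}^\pm$ and $\widetilde E_n^\pm=\widetilde E_{n-1}^\pm+\tilde e_{n-1}^\pm$, and eliminating $\sum_k f_k^\pm$, $\sum_k g_k^\pm$ via \eqref{source} with index $n-1$ leads to the key decompositions
\begin{align*}
\mathcal{L}_+(V_n,\Psi_n)-f^a&=(\mathcal{S}_{\theta_{n-1}}-I)f^a+(I-\mathcal{S}_{\theta_{n-1}})E_{n-1}^++e_{n-1}^+,\\
\mathcal{L}_-(h_n,\Psi_n)&=(I-\mathcal{S}_{\theta_{n-1}})E_{n-1}^-+e_{n-1}^-,\\
\mathcal{B}_+(V_n,h_n,\psi_n)&=(I-\mathcal{S}_{\theta_{n-1}})\widetilde E_{n-1}^++\tilde e_{n-1}^+,\\
\mathcal{B}_-(h_n,\psi_n)&=(I-\mathcal{S}_{\theta_{n-1}})\widetilde E_{n-1}^-+\tilde e_{n-1}^-.
\end{align*}

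Next I would estimate each contribution separately. The smoothing bound \eqref{smooth.p1b} controls $(\mathcal{S}_{\theta_{n-1}}-I)f^a$ in $H_*^s(\Omega_T^+)$ by $\theta_{n-1}^{s-\alpha}\|f^a\|_{H_*^\alpha(\Omega_T^+)}$, which is absorbed into half of the target bound by the smallness of $\tfrac{1}{\boldsymbol{\epsilon}}\|f^a\|_{H_*^\alpha(\Omega_T^+)}$. The accumulated-error contributions $(I-\mathcal{S}_{\theta_{n-1}})E_{n-1}^\pm$ and $(I-\mathcal{S}_{\theta_{n-1}})\widetilde E_{n-1}^\pm$ are bounded via \eqref{smooth.p1b} in terms of their $H_*^{\alpha+1}$ and $H^{\alpha+2}$ norms, which Lemma \ref{lem:sum2} controls by $\boldsymbol{\epsilon}^2\theta_n$ and $\boldsymbol{\epsilon}^2$ respectively, producing estimates of order $\boldsymbol{\epsilon}^2\theta_{n-1}^{s-\alpha-1}$. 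The one-step errors $e_{n-1}^\pm$ and $\tilde e_{n-1}^\pm$ are estimated directly by Lemmas \ref{lem:quad}--\ref{lem:last}. Adding everything up, choosing $\boldsymbol{\epsilon}$ and $\tfrac{1}{\boldsymbol{\epsilon}}\|f^a\|_{H_*^\alpha(\Omega_T^+)}$ small enough and $\theta_0$ large enough then yields \eqref{Hl.b} with constant $2\boldsymbol{\epsilon}$; since no $f^a$ piece appears for $\mathcal{B}_\pm$, the same argument produces the sharper constant $\boldsymbol{\epsilon}$ in \eqref{Hl.c}.

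The main obstacle is purely arithmetic: one must verify that for every $s$ in the stated ranges, each exponent $\varsigma_i(s)$ ($i=1,\ldots,4$) appearing in Lemmas \ref{lem:quad}--\ref{lem:last} satisfies $\varsigma_i(s)\leq s-\alpha$, so that the single-step error $\boldsymbol{\epsilon}^2\theta_{n-1}^{\varsigma_i(s)-1}\varDelta_{n-1}$ can be dominated by $\boldsymbol{\epsilon}\theta_n^{s-\alpha-1}$ once $\boldsymbol{\epsilon}$ is small. The worst case is $\varsigma_4$, originating from the last error term $e_{n\pm}^*$, and it is this exponent that forces both the window $\widetilde\alpha=\alpha+3$ and the threshold $\alpha\geq 12$. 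Once these exponent inequalities are checked, the rest of the argument is routine smoothing-operator bookkeeping and requires no further analytical input.
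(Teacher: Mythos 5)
Your overall strategy is correct and matches the paper's approach: the paper explicitly references Trakhinin's Lemma 16 from [T09] and Coulombel--Secchi's Lemma 16, which perform exactly the telescoping you describe. The decompositions
\begin{align*}
\mathcal{L}_+(V_n,\Psi_n)-f^a&=(\mathcal{S}_{\theta_{n-1}}-I)f^a+(I-\mathcal{S}_{\theta_{n-1}})E_{n-1}^++e_{n-1}^+,\\
\mathcal{B}_\pm&=(I-\mathcal{S}_{\theta_{n-1}})\widetilde E_{n-1}^\pm+\tilde e_{n-1}^\pm,
\end{align*}
are the right ones, and the treatment of the one-step errors via Lemmas \ref{lem:quad}--\ref{lem:last}, together with the exponent check $\varsigma_i(s)\leq s-\alpha$ (which does hold for $\alpha\geq 12$), is sound.

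However, there is an arithmetic slip in the way you handle the interior accumulated-error contribution that skips the most delicate point. You write that the contributions $(I-\mathcal{S}_{\theta_{n-1}})E_{n-1}^\pm$ and $(I-\mathcal{S}_{\theta_{n-1}})\widetilde E_{n-1}^\pm$ both ``produce estimates of order $\boldsymbol{\epsilon}^2\theta_{n-1}^{s-\alpha-1}$.'' This is correct for the \emph{boundary} accumulated error, where Lemma \ref{lem:sum2} gives $\|\widetilde E_{n-1}^\pm\|_{H^{\alpha+1}\times H^{\alpha+2}}\lesssim\boldsymbol{\epsilon}^2$ with no growth in $n$, so \eqref{smooth.p1b} with $j=\alpha+1$ indeed yields $\theta_{n-1}^{s-\alpha-1}\boldsymbol{\epsilon}^2$, which absorbs into $\boldsymbol{\epsilon}\theta_n^{s-\alpha-1}$ for $\boldsymbol{\epsilon}$ small. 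That is why \eqref{Hl.c} goes through cleanly. But for the \emph{interior} accumulated error, Lemma \ref{lem:sum2} gives $\|E_{n-1}^+\|_{H_*^{\alpha+1}}\lesssim\boldsymbol{\epsilon}^2\theta_{n-1}$, carrying an extra factor $\theta_{n-1}$, so the smoothing estimate produces $\theta_{n-1}^{s-\alpha-1}\cdot\boldsymbol{\epsilon}^2\theta_{n-1}=\boldsymbol{\epsilon}^2\theta_{n-1}^{s-\alpha}$, which is one full power of $\theta$ larger than what you wrote and does \emph{not} obviously fit under the target $2\boldsymbol{\epsilon}\theta_n^{s-\alpha-1}$ by merely taking $\boldsymbol{\epsilon}$ small. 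This is precisely the non-trivial part of the proof of \eqref{Hl.b}, and your account glosses over it. You should trace through Trakhinin's Lemma 16 proof (the paper's cited reference) to see how this extra $\theta_n$ is disposed of before declaring the argument routine.

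A second, smaller issue: for the $(\mathcal{S}_{\theta_{n-1}}-I)f^a$ term you quote the exponent $\theta_{n-1}^{s-\alpha}\|f^a\|_{H_*^\alpha}$. With \eqref{smooth.p1b} one needs $s\le j$, so for $s$ up to $\widetilde\alpha-2=\alpha+1$ the minimal admissible choice is $j=\alpha+1$, giving $\theta_{n-1}^{s-\alpha-1}\|f^a\|_{H_*^{\alpha+1}}$, which is the exponent that actually matches the target. Since $f^a\in H^{m}$ with $m\geq\alpha+5$ by \eqref{f^a:est}, the smallness of $\|f^a\|_{H_*^{\alpha+1}}$ is available (it is controlled by $\delta_0(T)$), but the bookkeeping in your write-up uses an exponent that is off by one and would not close for $s$ near $\alpha+1$.
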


Thanks to Lemmas \ref{lem:Hl1}--\ref{lem:Hl2},
hypothesis $(\mathbf{H}_{{n}})$ follows from $(\mathbf{H}_{{n}-1})$ provided that $\widetilde{\alpha}={\alpha }+3$ and ${\alpha }\geq 12$ hold,
$\boldsymbol{\epsilon}, T>0$ and $\frac{1}{\boldsymbol{\epsilon}}\|f^a\|_{H_*^{\alpha}(\Omega_{T}^+)}$ are sufficiently small,
and $\theta_0 \geq 1$ is large enough.
Fixing the constants ${\alpha }\geq 12$, $\widetilde{\alpha}={\alpha }+3$,
$\boldsymbol{\epsilon}>0$, and $\theta_0\geq1$,
we can show hypothesis $(\mathbf{H}_{0})$ as in \cite[Lemma 17]{T09ARMAMR2481071}.

\begin{lemma}\ \label{lem:H0}
 If time $T>0$ is small enough, then hypothesis $(\mathbf{H}_0)$ is satisfied.
\end{lemma}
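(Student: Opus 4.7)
The plan is to check hypothesis $(\mathbf{H}_0)$ part by part, exploiting that $(V_0, h_0, \psi_0) = 0$ by construction so only the $k = 0$ instance of each estimate needs verification.

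First, parts (b) and (c) of $(\mathbf{H}_0)$ are essentially free. From the very definition \eqref{P.new} we have $\mathcal{L}_+(0, 0) = 0$, while $\mathcal{L}_-(h_0, \Psi_0) = \mathbb{L}_-(h^a, \Phi^a) = 0$ by \eqref{app1b}. Hence the part (b) estimate collapses to $\|f^a\|_{H_*^s(\Omega_T^+)} \leq 2\boldsymbol{\epsilon}\,\theta_0^{s-\alpha-1}$ for $s \in \{6,\ldots,\tilde{\alpha}-2\}$, which follows from \eqref{f^a:est} provided $T$ is taken small enough that $\delta_0(T) \leq 2\boldsymbol{\epsilon}\,\theta_0^{6-\alpha-1}$ (recall $s - \alpha - 1 < 0$ here so the smallest $s$ gives the binding bound). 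Part (c) holds automatically: \eqref{app1c}--\eqref{app1d} give $\mathcal{B}_+(0,0,0) = 0$ and $\mathcal{B}_-(0,0) = 0$.

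The remaining work is part (a), which requires producing $(\delta V_0, \delta h_0, \delta\psi_0)$. To this end, I would apply Theorem \ref{thm:linear} to problem \eqref{effective.NM} at $n = 0$. The modified state $(V_{1/2}, h_{1/2}, \psi_{1/2})$ issues from Proposition \ref{pro:modified} with zero input, so that $(U^a + V_{1/2}, h^a + h_{1/2}, \varphi^a + \psi_{1/2})$ satisfies the basic-state hypotheses \eqref{bas1a}--\eqref{bas1f}, while from {\bf Assumption (A-2)} and \eqref{source} the sources read $f_0^+ = \mathcal{S}_{\theta_0} f^a$ and $f_0^- = g_0^\pm = 0$. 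Property \eqref{smooth.p1a} yields $\|\mathcal{S}_{\theta_0} f^a\|_{H_*^s(\Omega_T^+)} \lesssim_m \|f^a\|_{H_*^s(\Omega_T^+)} \leq \delta_0(T)$ uniformly for $s \leq m$, so the tame estimate \eqref{tame} provides
\[
\|\delta \dot V_0\|_{H_*^s(\Omega_T^+)} + \|\delta \dot h_0\|_{H^s(\Omega_T^-)} + \|\delta\Psi_0\|_{H^s(\Omega_T)} + \|(\delta\psi_0, \mathrm{D}_{x'}\delta\psi_0)\|_{H^s(\Sigma_T)} \leq C(K_0,M_0)\,\delta_0(T)
\]
for $s = 6,\ldots,\tilde{\alpha}$. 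Converting $\delta \dot V_0, \delta \dot h_0$ back to $\delta V_0, \delta h_0$ through \eqref{good.NM} preserves the same right-hand side up to Moser-type corrections absorbed into $C$.

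To close the argument, note that once $\boldsymbol{\epsilon}, \theta_0, \alpha, \tilde{\alpha}$ are fixed, the quantity $\boldsymbol{\epsilon}\,\theta_0^{s-\alpha-1}\varDelta_0$ is a strictly positive constant for every $s \in \{6,\ldots,\tilde{\alpha}\}$. I would therefore pick $T$ so small that $C(K_0, M_0)\,\delta_0(T) \leq \boldsymbol{\epsilon}\,\theta_0^{s-\alpha-1}\varDelta_0$ simultaneously for these finitely many values of $s$, and simultaneously small enough that the $k=0$ instance of part (b) holds; this is possible since $\delta_0(T) \to 0$ as $T \to 0$ by \eqref{f^a:est}. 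No substantive obstacle is expected: all of the heavy apparatus---the tame estimate \eqref{tame}, the modified state, the smoothing operators---is already available, and the base case merely cashes in on the fact that the initial defect $f^a$ of the approximate solution can be driven below any prescribed threshold by shrinking the time horizon.
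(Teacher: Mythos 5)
Your argument is correct and matches the standard base-case verification the paper cites from Trakhinin's 2009 ARMA paper. Parts (b) and (c) indeed reduce to the single inequality $\|f^a\|_{H_*^s(\Omega_T^+)} \leq 2\boldsymbol{\epsilon}\theta_0^{s-\alpha-1}$ (binding at $s=6$) once one uses $\mathcal L_+(0,0)=0$, $\mathcal L_-(0,0)=\mathbb L_-(h^a,\Phi^a)=0$ from \eqref{app1b}, and $\mathcal B_\pm=0$ from \eqref{app1c}--\eqref{app1d}; and for part (a) the tame estimate \eqref{tame} with $f_0^+=\mathcal S_{\theta_0}f^a$, $f_0^-=g_0^\pm=0$ (compatibility \eqref{h.nat:com} trivially holds for zero data) gives bounds of size $C(K_0,M_0)\delta_0(T)$, which can be pushed below the finitely many thresholds $\boldsymbol{\epsilon}\theta_0^{s-\alpha-1}\varDelta_0$ by shrinking $T$ because $\delta_0(T)\to 0$; the conversion from good unknowns via \eqref{good.NM} and the availability of the modified state at $n=0$ (which is $\boldsymbol{\epsilon}$-small by Proposition \ref{pro:modified} since $V_0=h_0=\psi_0=0$) are handled as you indicate.
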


We are ready to conclude the proof of our main result.

\vspace{2mm}
\noindent  {\bf Proof of Theorem {\rm\ref{thm:main}}.}\quad
Suppose that the initial data $(U_0,\varphi_0)$ satisfy all the assumptions in Theorem {\rm\ref{thm:main}}.
Set $\widetilde{\alpha}=m-{5}$ and ${\alpha }=\widetilde{\alpha}-3\geq 12$.
Then the initial data $(U_0,\varphi_0)$ are compatible up to order $m=\widetilde{\alpha}+{5}$.
Taking $\boldsymbol{\epsilon}, T>0$ small enough and $\theta_0\geq 1$ suitably large can verify all the requirements of Lemmas \ref{lem:Hl1}--\ref{lem:H0} due to \eqref{app2a}--\eqref{f^a:est}.
Therefore, we can find some $T>0$, such that hypothesis $(\mathbf{H}_{{n}})$ is satisfied for all ${n}\in\mathbb{N}$, implying
\begin{align*}
\sum_{n=0}^{\infty}\left(
\|\delta V_n\|_{H_*^s(\Omega_{T}^+)}+\|\delta h_n\|_{H^s(\Omega_{T}^-)}
+\|(\delta\psi_n,\mathrm{D}_{x'}\delta\psi_n)\|_{H^{s}(\Sigma_T)}
\right)
\lesssim \sum_{n=0}^{\infty}\theta_n^{s-{\alpha }-2} <\infty
\end{align*}
for all $s\in \{6,\ldots, {\alpha }-1\}.$
So the sequence $(V_{n},h_n,\psi_n)$ converges to some limit $(V,h, \psi)$ in $H_*^{{\alpha }-1}(\Omega_T^+)\times H^{{\alpha }-1}(\Omega_T^-) \times H^{{\alpha }-1}({\Sigma_T})$.
Furthermore, $V_n\to V$ in $H^{\lfloor ({\alpha }-1)/2\rfloor}(\Omega_T^+)$ as $n\to \infty$ and $\mathrm{D}_{x'}\psi \in H^{{\alpha }-1}({\Sigma_T})$.
Passing to the limit in \eqref{Hl.b}--\eqref{Hl.c} for $s=m-{9}$ implies \eqref{P.new}, and
hence $(U, \hat{h}, \varphi)=(U^a+V, h^a+h, \varphi^a+\psi)$
is a solution of problem \eqref{NP1} on $[0,T]$.
The uniqueness of solutions to problem \eqref{NP1} can be achieved by a standard argument (see, {\it e.g.}, \cite[\S 13]{ST14MR3151094} or \cite[\S 3.5]{TW21b}).
The proof is complete.
\qed

\bigskip



{\footnotesize 
  }

\end{document}